\mathchardef\emptyset="001F%added by Gianni
\providecommand{\U}[1]{\protect\rule{.1in}{.1in}}
\newtheorem{theorem}{Theorem}[section]
\newtheorem{lemma}[theorem]{Lemma}
\newtheorem{proposition}[theorem]{Proposition}
\newtheorem{remark}[theorem]{Remark}
\newenvironment{proof}[1][Proof]{\noindent\textbf{#1.} }{\ \rule{0.5em}{0.5em}}
\mathchardef\emptyset="001F
\numberwithin{equation}{section}
\begin{document}

\title{Asymptotic Analysis of Second Order Nonlocal Cahn-Hilliard-Type Functionals}
\author{Gianni Dal Maso\\SISSA, \\Via Bonomea 265, 34136 Trieste, Italy
\and Irene Fonseca\\Department of Mathematical Sciences,\\Carnegie Mellon University,\\Pittsburgh PA 15213-3890, USA
\and Giovanni Leoni\\Department of Mathematical Sciences, \\Carnegie Mellon University, \\Pittsburgh PA 15213-3890, USA}
\maketitle

\begin{abstract}
In this paper the study of a nonlocal second order Cahn--Hilliard-type
singularly perturbed family of functions is undertaken. The kernels considered
include those leading 
to Gagliardo fractional seminorms for gradients. Using
$\Gamma$ convergence the integral representation of the limit energy is
characterized leading to an anisotropic surface energy on interfaces
separating different 
phases.

\bigskip\bigskip\bigskip

\end{abstract}

\section{Introduction}

In the van der 
 Waals--Cahn--Hilliard theory of phase transitions
\cite{cahn-hilliard1958}, \cite{rowlinson1979}, \cite{van-der-walls1893},
\cite{gurtin1985}, the total energy is given by
\begin{equation}
\frac{1}{\varepsilon}\int_{\Omega}W(u(x))\,dx+\varepsilon\int_{\Omega
}\left\vert \nabla u(x)\right\vert ^{2}\,dx,\label{functional modica mortola}%
\end{equation}
where the open bounded set $\Omega\subset\mathbb{R}^{n}$ represents a
container, $u:\Omega\rightarrow\mathbb{R}$ is the fluid density, and
$W:\mathbb{R}\rightarrow\lbrack0,+\infty)$ is a double-well potential
vanishing only at the phases $-1$ and $1$. The perturbation $\varepsilon
\int_{\Omega}\left\vert \nabla u(x)\right\vert ^{2}\,dx$ penalizes rapid
changes of the density $u$, and it plays the role of an interfacial energy.
This problem has been extensively studied in the last four decades (see, e.g.,
\cite{baldo1990}, \cite{barroso-fonseca1994}, \cite{bouchitte1990},
\cite{fonseca-tartar1989}, \cite{modica-mortola1977}, \cite{modica1987},
\cite{owen-sternberg1991}, \cite{owen1988}, \cite{sternberg1988},
\cite{sternberg1991}).

Higher order perturbations were considered in the study of shape deformation
of unilamellar membranes undergoing inplane phase separation (see,
e.g.,\ \cite{kawakatsu-andelman-kawasaki-taniguchi1993},
\cite{taniguchi-kawasaki-andelman-kawakatsu1994},
\cite{leibler-andelman1987,seul-andelman1995}). A simplified local version of
that model (see \cite{seul-andelman1995}) leads to the study of a
Ginzburg-Landau-type energy
\begin{equation}
\frac{1}{\varepsilon}\int_{\Omega}W(u(x))\,dx+q\varepsilon\int_{\Omega
}\left\vert \nabla u(x)\right\vert ^{2}\,dx+\varepsilon^{3}\int_{\Omega
}\left\vert \nabla^{2}u(x)\right\vert ^{2}dx~,\label{functional chermis}%
\end{equation}
where $q\in\mathbb{R}$. This functional is also related to the
Swift--Hohenberg equation (see \cite{swift-hohenberg1977}). When $q=0$, the
functional reduces to the second order version of
(\ref{functional modica mortola}), to be precise,
\begin{equation}
\frac{1}{\varepsilon}\int_{\Omega}W(u(x))\,dx+\varepsilon^{3}\int_{\Omega
}\left\vert \nabla^{2}u(x)\right\vert ^{2}%
dx~,\label{functional fonseca-mantegazza}%
\end{equation}
which was studied in \cite{fonseca-mantegazza2000}. The case $q>0$ in was
treated in \cite{hilhorst-peletier-schatzle2002}, with $|\nabla^{2}u|^{2}$
replaced by $|\Delta u|^{2}$. The case $q<0$ is more delicate and was
considered in \cite{chermisi-dalmaso-fonseca-leoni2011} and \cite{cicalese-spadaro-zeppieri2011}. The original energy functional proposed
in \cite{kawakatsu-andelman-kawasaki-taniguchi1993},
\cite{taniguchi-kawasaki-andelman-kawakatsu1994},
\cite{leibler-andelman1987}, \cite{seul-andelman1995}) involved also a nonlocal
perturbation and was addressed in
\cite{fonseca-hayrapetyan-leoni-zwicknagl2016}.

A nonlocal local version of (\ref{functional modica mortola}) was studied in
\cite{alberti-bellettini-I-1998}, \cite{alberti-bellettini-II-1998},
\cite{alberti-bellettini-cassandro-presutti1996}, with the perturbation
$\varepsilon\int_{\Omega}\left\vert \nabla u(x)\right\vert ^{2}\,dx$ replaced
by a nonlocal term, leading to the energy
\begin{equation}
\frac{1}{\varepsilon}\int_{\Omega}W(u(x))\,dx+\varepsilon\int_{\Omega}%
\int_{\Omega}J_{\varepsilon}(x-y)|u(x)-u(y)|^{2}%
dxdy~,\label{functional alberti bellettini}%
\end{equation}
where
\begin{equation}
J_{\varepsilon}(x):=\frac{1}{\varepsilon^{n}}J\Bigl(\frac{x}{\varepsilon
}\Bigr)\label{J4}%
\end{equation}
and the kernel $J:\mathbb{R}^{n}\rightarrow\lbrack0,+\infty)$ is an even
measurable function such that%
\begin{equation}
\int_{\mathbb{R}^{n}}J(x)(|x|\wedge|x|^{2})~dx=:M_{J}<+\infty~,\label{J1}%
\end{equation}
with $a\wedge b:=\min\{a,b\}$. Functionals of the form
(\ref{functional alberti bellettini}) arise in equilibrium statistical
mechanics as free energies of continuum limits of Ising spin systems on
lattices. In that setting, $u$ is a macroscopic magnetization density and $J$
stands for a ferromagnetic Kac potential (see
\cite{alberti-bellettini-cassandro-presutti1996}). Note that (\ref{J1}) is
satisfied if $J$ is integrable and has compact support. Another important case
is when
\begin{equation}
J(x)=|x|^{-n-2s}\quad\text{with }\frac{1}{2}<s<1~,\label{gagliardo kernel}%
\end{equation}
so that $J_{\varepsilon}(x)=\varepsilon^{2s}|x|^{-n-2s}$, which leads to
Gagliardo's seminorm for the fractional Sobolev space $H^{s}(\mathbb{R}^{n})$
(see \cite{dinezza-palatucci-valdinoci2012}, \cite{gagliardo1957}
\cite{leoni2009}). A functional related to
(\ref{functional alberti bellettini}) with kernel (\ref{gagliardo kernel}) has
been studied in \cite{alberti-bouchitte-seppecher1994},
\cite{alberti-bouchitte-seppecher1998}, and \cite{savin-valdinoci2012} for
$0<s<1$ (see also \cite{garroni-palatucci2006} for an $L^{p}$ version in
dimension $n=1$).

The motivation in \cite{savin-valdinoci2012} was the renewed interest in the
fractional Laplacian (see, e.g., \cite{caffarelli-stinga} and the references
therein), and nonlocal characterizations of fractional Sobolev spaces
(\cite{ambrosio-de-philippis}, \cite{bourgain-brezis-mironescu},
\cite{brezis2015}, \cite{leoni-spector} and the references therein).

Another important application of this type of nonlocal singular perturbation
functionals is in the study of dislocations in elastic materials exhibiting
microstructure (see, e.g., \cite{cacace-garroni}, \cite{conti-garroni-mueller}%
, \cite{garroni-mueller}).

In this paper we consider a nonlocal version of
(\ref{functional fonseca-mantegazza}), to be precise, we study the functional%
\begin{equation}
\mathcal{F}_{\varepsilon}(u):=\frac{1}{\varepsilon}\int_{\Omega}%
W(u(x))\,dx+\varepsilon\int_{\Omega}\int_{\Omega}J_{\varepsilon}(x-y)|\nabla
u(x)-\nabla u(y)|^{2}dxdy \label{functional ours}%
\end{equation}
for $u\in W_{\operatorname*{loc}}^{1,2}(\Omega)$, where $\Omega\subset
\mathbb{R}^{n}$, $n\geq2$, is a bounded open set with Lipschitz boundary, the
double-well potential $W:\mathbb{R}\rightarrow\lbrack0,+\infty)$ is a
continuous function with $W^{-1}(\{0\})=\{-1,+1\}$ satisfying appropriate
coercivity and growth conditions, and $J_{\varepsilon}$ is given by
(\ref{J4}). We assume a non-degeneracy hypothesis (see (\ref{J2})) on the even
measurable kernel $J:\mathbb{R}^{n}\rightarrow\lbrack0,+\infty)$, and that
(\ref{J1}) holds.

We establish compactness in $L^{2}(\Omega)$ for energy bounded sequences, and
in order to study the asymptotic behavior of (\ref{functional ours}) as
$\varepsilon\rightarrow0^{+}$, we use the notion of $\Gamma$-convergence (see
\cite{dalmaso1993}) with respect to the metric in $L^{2}(\Omega)$ and we
identify the $\Gamma$-limit of $\mathcal{F}_{\varepsilon}$. As it is usual, we
extend $\mathcal{F}_{\varepsilon}(u)$ to be $+\infty$ for $u\in L^{2}%
(\Omega)\setminus W_{\operatorname*{loc}}^{1,2}(\Omega)$. Our first main
result is the following theorem.

\begin{theorem}
[Compactness]\label{theorem compactness n>1}Assume that $W$ and $J$ satisfy
(\ref{W1})--(\ref{W3b}) and (\ref{J1}), (\ref{J2}), respectively. Let
$\{u_{\varepsilon}\}\subset W_{\operatorname*{loc}}^{1,2}(\Omega)\cap
L^{2}\left(  \Omega\right)  $ be such that%
\begin{equation}
M:=\sup_{\varepsilon}\mathcal{F}_{\varepsilon}(u_{\varepsilon})<+\infty~.
\label{D7}%
\end{equation}
Then there exists a sequence $\varepsilon_{j}\rightarrow0^{+}$ such that
$\{u_{\varepsilon_{j}}\}$ converges in $L^{2}(\Omega)$ to some function $u\in
BV(\Omega;\{-1,1\})$.
\end{theorem}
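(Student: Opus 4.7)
The plan is to adapt the Modica--Mortola compactness strategy to the second-order nonlocal setting. From $\mathcal{F}_\varepsilon(u_\varepsilon) \leq M$ the potential term yields $\int_\Omega W(u_\varepsilon)\,dx \leq \varepsilon M$, which tends to zero, so the coercivity and growth hypotheses on $W$ give a uniform $L^2(\Omega)$ bound on $\{u_\varepsilon\}$ and force $u_\varepsilon^2 - 1 \to 0$ in $L^2(\Omega)$, with quantitative rate $\|u_\varepsilon^2-1\|_{L^2(\Omega)} = O(\varepsilon^{1/2})$ coming from the quadratic behavior of $W$ near its wells. The standard device is then to show that $\{G(u_\varepsilon)\}$ is bounded in $BV(\Omega)$, where $G(t) := \int_0^t \sqrt{W(s)}\,ds$: Cauchy--Schwarz gives
\begin{equation}
\int_\Omega |\nabla G(u_\varepsilon)|\,dx \leq \Bigl(\frac{1}{\varepsilon}\int_\Omega W(u_\varepsilon)\,dx\Bigr)^{1/2}\Bigl(\varepsilon \int_\Omega |\nabla u_\varepsilon|^2\,dx\Bigr)^{1/2},
\end{equation}
so once a companion estimate $\varepsilon \int_\Omega |\nabla u_\varepsilon|^2\,dx \leq C(M)$ is in hand, $\{G(u_\varepsilon)\}$ is uniformly bounded in $BV(\Omega)$. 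Rellich compactness then produces a subsequence along which $G(u_{\varepsilon_j}) \to v$ in $L^1(\Omega)$ and a.e., and the strict monotonicity of $G$ outside the wells transfers this into a.e.\ convergence of $u_{\varepsilon_j}$ to some $u$; Fatou applied to $\int_\Omega W(u)$ forces $u \in \{-1,+1\}$ a.e., so $u \in BV(\Omega;\{-1,+1\})$, and the growth of $W$ together with equi-integrability upgrade the convergence from $L^1$ to $L^2(\Omega)$.

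The main obstacle is the companion $H^1$-type bound, because the nonlocal perturbation only sees \emph{increments} of $\nabla u_\varepsilon$. I would proceed in two stages. First, the non-degeneracy assumption (\ref{J2}) furnishes a pointwise lower bound $J \geq c_0\chi_{B_{r_0}}$, and a Poincar\'e inequality on balls of radius $\varepsilon r_0$ shows that, up to boundary corrections handled by a Lipschitz extension of $u_\varepsilon$ to $\mathbb{R}^n$, the nonlocal term dominates $\varepsilon^3 \|\nabla^2 u_\varepsilon\|_{L^2(\Omega)}^2$ (the correct scaling, consistent with $\int J_\varepsilon(z)|z|^2\,dz \sim \varepsilon^2$ and the formal expansion $\nabla u(x) - \nabla u(y) \approx \nabla^2 u(x)\cdot(x-y)$). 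Second, and more delicately, one must pass from this $L^2$ bound on $\nabla^2 u_\varepsilon$ at scale $\varepsilon^{-3/2}$ to an $L^2$ bound on $\nabla u_\varepsilon$ at scale $\varepsilon^{-1/2}$: the naive Gagliardo--Nirenberg interpolation $\|\nabla u\|_{L^2}^2 \leq C\|u\|_{L^2}\|\nabla^2 u\|_{L^2}$ loses a factor of $\varepsilon^{-1/2}$, so the extra smallness $\|u_\varepsilon^2-1\|_{L^2} = O(\varepsilon^{1/2})$ must be exploited, for instance via the integration-by-parts identity $\int|\nabla(u^2)|^2\,dx = -\int(u^2-1)\Delta(u^2)\,dx + \mathrm{bdy}$, combined with Cauchy--Schwarz and the dimension-free Gagliardo--Nirenberg estimate $\|\nabla u\|_{L^4}^2 \leq C\|u\|_{L^\infty}\|\nabla^2 u\|_{L^2}$, followed by a H\"older argument on the small-measure region $\{u_\varepsilon^2 < 1/2\}$. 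The quadratic behavior of $W$ at its wells is indispensable here; extracting sharp $L^2$-control of $\nabla u_\varepsilon$ at the right scale from a functional that measures only increments of $\nabla u_\varepsilon$ is the hardest technical point of the argument.
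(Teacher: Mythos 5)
Your overall plan---first establish a companion bound $\varepsilon\int_\Omega|\nabla u_\varepsilon|^2\,dx\leq C(M)$, then run a Modica--Mortola argument with $G(u)=\int_0^u\sqrt{W(s)}\,ds$---is a legitimate alternative to the paper's route (which instead proves one-dimensional compactness directly by counting transitions, via Lemmas~\ref{lemma 1} and~\ref{lemma compactness} and Theorem~\ref{theorem compactness}, and then passes to $n\geq 2$ by slicing through Proposition~\ref{proposition blaschke} and an $L^2$-compactness-by-slices criterion). Indeed the paper proves exactly the companion bound you want, namely $\varepsilon\int_A|\nabla u|^2\,dx\leq c_{J,W}^{(n)}\mathcal F_\varepsilon(u,(A)^{2\varepsilon\gamma_J})$ (Lemma~\ref{F53}), though it is not actually used in the compactness proof.

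The mechanism you propose for that bound is, however, fatally flawed. You claim the nonlocal term dominates $\varepsilon^3\|\nabla^2 u_\varepsilon\|_{L^2}^2$; this has the inequality backwards. For smooth $u$, $\nabla u(x)-\nabla u(y)\approx\nabla^2u(x)(x-y)$ and $\int J_\varepsilon(z)|z|^2\,dz\sim\varepsilon^2$ give $\mathcal J_\varepsilon(u)\lesssim\varepsilon^3\|\nabla^2u\|_{L^2}^2$; the nonlocal term is \emph{weaker} than the local $H^2$ seminorm, not stronger. More bluntly, the admissible functions lie only in $W^{1,2}_{\operatorname*{loc}}(\Omega)$, and for a bounded, compactly supported $J$ (perfectly allowed under (\ref{J1}), (\ref{J2})) a function whose gradient is merely piecewise Lipschitz has finite $\mathcal F_\varepsilon$ while possessing no second derivative in $L^2$ at all; no Poincar\'e inequality or Lipschitz extension repairs this. (Your reading of (\ref{J2}) as a pointwise lower bound $J\geq c_0\chi_{B_{r_0}}$ is also too strong: (\ref{J2}) only requires $1/J^\xi$ to be integrable on some interval, and permits $J$ to vanish on a set of positive measure.) The actual route to Lemma~\ref{F53} produces no second-derivative control whatsoever: one shows, slice by slice, that the nonlocal term plus the finite difference $\big(u(t-\varepsilon\alpha(\xi))-u(t-\varepsilon\beta(\xi))\big)^2/\varepsilon^2$ bounds $|u'(t)|^2$ pointwise (Lemma~\ref{lemma 1}, Remark~\ref{remark 1}), and the finite-difference term is absorbed into the potential except on a small set of transitions whose measure is controlled by the transition cost of Lemma~\ref{lemma compactness}. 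Gagliardo--Nirenberg interpolation and $\nabla^2 u$ never enter.
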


The proof of this theorem is more involved than the corresponding one in
\cite{alberti-bellettini-II-1998} due to the presence of gradients in the
nonlocal term. This prevents us from using standard arguments in which
discontinuities in $u$ may be allowed. We first prove compactness in $n=1$,
and then use a slicing technique to treat the higher dimensional case.

To state the $\Gamma$ convergence result, we need to introduce some notation.
Given $n\geq2$ and $\nu\in\mathbb{S}^{n-1}:=\partial B_{1}(0)$, let $\nu_{1}$,
\ldots, $\nu_{n}$ be an orthonormal basis in $\mathbb{R}^{n}$ with $\nu
_{n}=\nu$. Here, and in what follows, we denote by $B_{r}(x)$ the open ball in
$\mathbb{R}^{n}$ centered at $x$ and with radius $r$. Let
\begin{align}
V^{\nu}  &  :=\{x\in\mathbb{R}^{n}:~|x\cdot\nu_{i}|<1/2\text{ for }%
i=1,\ldots,n-1\}~,\label{V ni}\\
Q^{\nu}  &  :=\{x\in\mathbb{R}^{n}:~|x\cdot\nu_{i}|<1/2
\text{ for }i=1,\ldots,n\}~,
\label{Q ni}%
\end{align}
 let $W_{\nu_{1},\ldots, \nu_{n-1}}^{1,2}$
be the set of all functions 
$v\in W_{\operatorname*{loc}}^{1,2}(\mathbb{R}^{n})$
such that $v(x+\nu_{i})=v(x)$ for a.e. $x\in\mathbb{R}^{n}$
and for every $i=1,\ldots,n-1$, and let
\begin{equation}
X^{\nu}:=\{  v\in  W_{\nu_{1},\ldots, \nu_{n-1}}^{1,2}:~v(x)=\pm1 
\text{ for a.e. }x\in\mathbb{R}^{n}\text{ with }\pm x\cdot\nu
\geq1/2\}\label{X ni}
\end{equation}
When $n=1$ take $\nu=\pm1$, $V^{\nu}:=\mathbb{R}$, $Q^{\nu}:=(-1/2,1/2)$, and
let $X^{\nu}$ be the space of all functions $v\in W_{\operatorname*{loc}%
}^{1,2}(\mathbb{R})$ such that $v(x)=\pm1$ for a.e. $x\in\mathbb{R}$ with $\pm
x\geq1/2$. We define the anisotropic surface energy density
\begin{equation}
\psi(\nu):=\inf_{0<\varepsilon<1}\inf_{v\in X^{\nu}}\mathcal{F}_{\varepsilon}^{\nu}(v)\,,\label{psi}\end{equation}
where
\[
\mathcal{F}_{\varepsilon}^{\nu}(u):= {}\frac{1}{\varepsilon}\int_{Q^{\nu}}W(u(x))\,dx
+\varepsilon\int_{V^{\nu}}\int_{\mathbb{R}^{n}}J_{\varepsilon}(x-y)|\nabla
u(x)-\nabla u(y)|^{2}dxdy~.
\]
Finally, we define $\mathcal{F}:L^{2}(\Omega)\rightarrow\lbrack0,+\infty]$ by%
\begin{equation}
\mathcal{F}(u):=\left\{
\begin{array}
[c]{ll}%
\displaystyle
\int_{S_{u}}\psi(\nu_{u})~d\mathcal{H}^{n-1} & \text{if }u\in BV(\Omega
;\{-1,1\})~,\\
+\infty & \text{otherwise in }L^{2}(\Omega)~,
\end{array}
\right.  \label{gamma limit}%
\end{equation}
where $S_{u}$ is the jump set of $u$, $\nu_{u}$ is the approximate normal to
$S_{u}$, and $\mathcal{H}^{n-1}$ is the $(n-1)$-dimensional Hausdorff measure
(see \cite{ambrosio-fusco-pallara2000} for a detailed description of these notions).

\begin{theorem}
[$\Gamma$-Limit]\label{theorem lim}Assume that $W$ and $J$ satisfy
(\ref{J2})--(\ref{W3b}) and (\ref{J1}), respectively. Then for every
$\varepsilon_{j}\rightarrow0^{+}$ the sequence $\{\mathcal{F}_{\varepsilon
_{j}}\}$ $\Gamma$-converges to $\mathcal{F}$ in $L^{2}(\Omega)$.
\end{theorem}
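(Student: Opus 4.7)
The proof splits into two standard inequalities of $\Gamma$-convergence: the $\liminf$ bound $\mathcal{F}(u) \le \liminf_j \mathcal{F}_{\varepsilon_j}(u_j)$ whenever $u_j \to u$ in $L^2(\Omega)$, and the existence of a recovery sequence $u_j \to u$ with $\limsup_j \mathcal{F}_{\varepsilon_j}(u_j) \le \mathcal{F}(u)$. In either case it suffices to treat $u \in BV(\Omega;\{-1,1\})$: otherwise $\mathcal{F}(u) = +\infty$, while for a $\liminf$-relevant sequence Theorem~\ref{theorem compactness n>1} forces the limit into $BV(\Omega;\{-1,1\})$ along any subsequence with bounded energy. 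A preliminary step, extracted directly from \eqref{psi}, is that $\psi:\mathbb{S}^{n-1} \to (0,+\infty)$ is bounded above and below away from zero, so that $\mathcal{F}$ is a well-defined anisotropic perimeter on $BV(\Omega;\{-1,1\})$: the upper bound comes from an explicit admissible $v \in X^\nu$, the lower bound by slicing in the direction $\nu$ to reduce to a uniform positive lower bound in dimension one.

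For the $\liminf$ inequality I would use a blow-up/localization argument in the spirit of Fonseca--M\"uller. Associate to $u_j$ the Radon measure
\[
\mu_j(A) := \frac{1}{\varepsilon_j} \int_A W(u_j)\, dx + \varepsilon_j \int_A \int_\Omega J_{\varepsilon_j}(x-y) |\nabla u_j(x) - \nabla u_j(y)|^2\, dy\, dx
\]
on $\Omega$, so that $\mu_j(\Omega) = \mathcal{F}_{\varepsilon_j}(u_j)$; after extraction $\mu_j \stackrel{*}{\rightharpoonup} \mu$ for some finite Radon measure $\mu$. At $\mathcal{H}^{n-1}$-a.e.\ $x_0 \in S_u$ with normal $\nu := \nu_u(x_0)$, I would compute the density $\lim_{r \to 0^+} r^{-(n-1)} \mu(x_0 + r Q^{\nu})$ by restricting to the cube $x_0 + r Q^{\nu}$: after the change of variables $x = x_0 + r y$, the restricted $\mathcal{F}_{\varepsilon_j}$ takes essentially the form $\mathcal{F}^{\nu}_{\varepsilon_j/r}$ evaluated at the rescaled function. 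Choosing a diagonal $(j,r) \to (\infty, 0)$ and modifying this rescaled function on a thin transition layer to meet the periodicity and boundary conditions defining $X^{\nu}$, comparison with \eqref{psi} yields the pointwise bound $\ge \psi(\nu)$, whence $\liminf_j \mathcal{F}_{\varepsilon_j}(u_j) \ge \mu(\Omega) \ge \mathcal{F}(u)$.

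For the $\limsup$ inequality I would first build a recovery sequence at a flat interface. Fix $\nu \in \mathbb{S}^{n-1}$ and $\delta > 0$; by \eqref{psi} there exist $\varepsilon_0 \in (0,1)$ and $v \in X^{\nu}$ with $\mathcal{F}^{\nu}_{\varepsilon_0}(v) \le \psi(\nu) + \delta$. Tile $v$ periodically in the $\nu_1, \dots, \nu_{n-1}$ directions and rescale by the factor $\varepsilon_j/\varepsilon_0$; extended by $\pm 1$ outside the transition slab, this delivers, by virtue of the scaling \eqref{J4} and the periodicity, a recovery sequence whose energy per unit $(n-1)$-area equals $\mathcal{F}^{\nu}_{\varepsilon_0}(v)$, so a diagonal argument in $\delta \to 0$ achieves $\psi(\nu)$. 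For general $u \in BV(\Omega;\{-1,1\})$, approximate $\{u = 1\}$ by sets with polyhedral (or $C^2$) boundary along which $\int_{S_u} \psi(\nu_u)\, d\mathcal{H}^{n-1}$ is continuous, invoking the standard density results for anisotropic perimeters with continuous, strictly positive surface density; then glue the flat constructions on the finitely many faces and diagonalize.

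The main technical obstacle, in both inequalities, is the gradient structure of the nonlocal term, which distinguishes this problem from \cite{alberti-bellettini-II-1998} and rules out the pointwise cut-off and truncation arguments available there: any pointwise modification of $u$ or $\nabla u$ introduces gradient jumps whose nonlocal energy, weighted by $\varepsilon J_\varepsilon$, is not automatically small. In the $\liminf$, enforcing the periodicity and boundary conditions of $X^{\nu}$ on the blow-up functions must therefore be performed by interpolating $\nabla u_j$ inside a thin transition layer, with width tuned so that the extra nonlocal cost is absorbed into the very quantity we are bounding below. In the $\limsup$, the gluing of flat constructions across distinct polyhedral faces requires showing that long-range cross-interactions between faces, which are genuinely present for heavy-tailed kernels such as the Gagliardo kernel \eqref{gagliardo kernel}, contribute $o(1)$ per face as $\varepsilon \to 0^+$; this is where the integrability hypothesis \eqref{J1} plays its decisive role.
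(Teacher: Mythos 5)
Your strategy coincides with the paper's in all its essential moves: compactness first; then a blow-up argument for the liminf, associating to $u_j$ the Radon measures $\mu_j$, extracting a weak$^*$ limit $\mu$, rescaling to cubes $x_0+\rho Q^{\nu}$, and modifying the rescaled functions near $\partial Q^{\nu}$ to reach the class $X^{\nu}$; then, for the limsup, a periodic tiling of a near-optimizer of \eqref{psi} on flat interfaces, followed by polyhedral approximation and Reshetnyak continuity. The modification step you describe is precisely Theorem~\ref{modification}, and it does indeed have to cope with the second-order cutoff terms you anticipate. So the architecture is right, and the ``main obstacle'' paragraph shows you see where the work lies.

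There is, however, one step where your outline passes too quickly, and it is the crux of the liminf argument. After blow-up and modification you obtain a function $w_j\in X^{\nu}$ whose \emph{localized} energy $\mathcal{F}_{\eta_j}(w_j,Q_1^{\nu})$ is controlled by $g(x_0)+O(\delta)$. But the quantity in the definition \eqref{psi} of $\psi$ is $\mathcal{F}^{\nu}_{\eta_j}(w_j)=\mathcal{W}_{\eta_j}(w_j,Q^{\nu})+\mathcal{J}_{\eta_j}(w_j,V^{\nu},\mathbb{R}^{n})$, where the nonlocal integral runs over the unbounded strip $V^{\nu}$ against all of $\mathbb{R}^{n}$. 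After periodic extension, this energy could a priori be much larger than $\mathcal{J}_{\eta_j}(w_j,Q_1^{\nu})$, so the assertion ``comparison with \eqref{psi} yields the pointwise bound $\ge\psi(\nu)$'' is not a one-line deduction. The paper addresses this in Lemma~\ref{lemma extra terms}, which bounds $\mathcal{J}_{\varepsilon}(u,V^{\nu},\mathbb{R}^{n})-\mathcal{J}_{\varepsilon}(u,Q_1^{\nu})$ by $\kappa_2\delta$ plus tail terms $\bigl(\kappa_3\omega_1(\varepsilon/\delta)+\kappa_4\omega_1(\varepsilon)\bigr)\,\varepsilon\int_{Q_1^{\nu}}|\nabla u|^2\,dx$ coming from the heavy part of $J$; the proof exploits both the periodicity in $X^{\nu}$ and the integrability hypothesis \eqref{J1}, and the gradient factor is then controlled by the interpolation Lemma~\ref{F53}. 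Your text mentions tails only for the limsup (cross-face interactions), but the analogous control is equally indispensable in the liminf, and it is not subsumed by the transition-layer modification: it is an a priori estimate on a genuinely infinite-domain quantity. Without an explicit argument here, the chain from the local measure density $g(x_0)$ to $\psi(\nu_u(x_0))$ does not close.

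A smaller point: your preliminary claim that $\psi$ is bounded and positive is correct in spirit (the positivity does follow from the one-dimensional slicing bound of Lemma~\ref{lemma compactness} together with the Blaschke--Petkantschin decomposition), but the Reshetnyak step in the limsup actually needs \emph{continuity} of $\psi$ on $\mathbb{S}^{n-1}$, not merely boundedness, so this should be flagged as an additional ingredient rather than subsumed under ``strictly positive surface density.''
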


Although the general structure of the proof is standard, there are remarkable
technical difficulties due to the nonlocality of the perturbation and the
presence of gradients.

This paper is organized as follows. After a brief section on preliminaries, on
Section \ref{section compactness one} in order to establish compactness in
dimension $n=1$, we prove an interpolation result, which allows us to control the
$L^{2}$ norm of $u^{\prime}$ in terms of the full energy (see Lemma
\ref{F33}). Section \ref{section compactness n 2} is devoted to compactness in
higher dimensions, and here again we obtain the equivalent to the
interpolation Lemma \ref{F33} (see Lemma \ref{F53}). As it is classical in
this type of problems, it is important to be able to modify admissible
sequences near the boundary of their domain without increasing the limit
energy. We address this in Theorem \ref{modification} in Section
\ref{section modification}. Section \ref{section gammaliminf} concerns the
$\Gamma$-liminf inequality, and in Section \ref{section gamma limsup} we
construct the recovery sequence for the $\Gamma$-limsup inequality.

\section{Preliminaries}

\label{section preliminary}

In what follows, in addition to (\ref{J1}) we also assume that the kernel
$J:\mathbb{R}^{n}\rightarrow\lbrack0,+\infty)$ has the following property:
there exist $\gamma_{J}>0$, $\delta_{\!J}\in(0,1)$, $c_{\!J}>0$, such that for
all $\xi\in\mathbb{S}^{n-1}$ there are $\alpha(\xi)<\beta(\xi)$ satisfying%
\begin{equation}
-\gamma_{J}\leq\alpha(\xi)\leq\alpha(\xi)+\delta_{\!J}\leq\beta(\xi)\leq
\gamma_{J} \label{J1a}%
\end{equation}
and%
\begin{equation}
\int_{\alpha(\xi)}^{\beta(\xi)}\frac{1}{J(t\xi)|t|^{n-1}}~dt\leq c_{\!J}~.
\label{J2}%
\end{equation}

\begin{remark}
For example, condition (\ref{J2}) holds if there exist $0<r<R$ and $a>0$ such
that $J(x)\geq a$ for every $x\in\mathbb{R}^{n}$ with $r<|x|<R$. Indeed, it is
enough to set $\gamma_{J}=R$, $\delta_{\!J}=R-r$, $\alpha(\xi)=r$, $\beta
(\xi)=R$, and $c_{\!J}=(na)^{-1}(r^{-n}-R^{-n})$.
\end{remark}

We assume that the double-well potential is a continuous function
$W:\mathbb{R}\rightarrow\lbrack0,+\infty)$ such that
\begin{align}
&  W^{-1}(\{0\})=\{-1,1\}~,\label{W1}\\
&  (|s|-1)^{2}\leq c_{W}W(s)\quad\text{for all }s\in\mathbb{R}~,\label{W2}\\
&  W\text{ is increasing on }[1,+\infty)\text{ and on }[-1,-1+a_{W}%
]~,\label{W3}\\
&  W\text{ is decreasing on }(-\infty,-1]\text{ and on }[1-a_{W},1]~,
\label{W3b}%
\end{align}
for some constants $c_{W}>0$ and $a_{W}\in(0,1)$.
%Moreover we assume that there exists $a_{W}%
%\in(0,1)$ such that
%\begin{equation}
%W\text{ is convex on }(-1-a_{W},-1+a_{W})\text{ and on }(1-a_{W},1+a_{W})~.
%\label{W4}%
%\end{equation}

If $s\leq0$ and $|s+1|\geq\frac{1}{2}$, then $|s-1|=|s|-1+2$, hence
$(s-1)^{2}\leq2(|s|-1)^{2}+4\leq2c_{W}W(s)+\frac{4}{m_{W}}W(s)$, where
\begin{equation}
m_{W}:=\min_{\{||s|-1|\geq\frac{1}{2}\}}W(s)>0~. \label{F30}%
\end{equation}
Together with (\ref{W2}) this leads to the estimate
\begin{equation}
(s-1)^{2}\leq\hat{c}_{W}W(s)\quad\text{for all }s\in\mathbb{R}\text{ with
}|s+1|\geq\frac{1}{2}~, \label{F31}%
\end{equation}
where $\hat{c}_{W}:=2c_{W}+\frac{4}{m_{W}}$. Similarly, it can be shown that
\begin{equation}
(s+1)^{2}\leq\hat{c}_{W}W(s)\quad\text{for all }s\in\mathbb{R}\text{ with
}|s-1|\geq\frac{1}{2}~. \label{F41}%
\end{equation}

We recall that $\Omega\subset\mathbb{R}^{n}$ is a bounded open set with
Lipschitz boundary. For every $\varepsilon>0$ and $u\in L^{2}\left(
\Omega\right)  $ consider the functional
\begin{equation}
\mathcal{F}_{\varepsilon}(u):=\left\{
\begin{array}
[c]{ll}%
\mathcal{W}_{\varepsilon}(u)+\mathcal{J}_{\varepsilon}(u) & \text{if }u\in
W_{\operatorname*{loc}}^{1,2}(\Omega)\cap L^{2}\left(  \Omega\right)  ~,\\
+\infty & \text{otherwise,}%
\end{array}
\right.  \label{F}%
\end{equation}
where
\begin{equation}
\mathcal{W}_{\varepsilon}(u):=\frac{1}{\varepsilon}\int_{\Omega}%
W(u(x))~dx\quad\text{for }u\in L^{2}\left(  \Omega\right)  ~,
\label{W functional}%
\end{equation}
and
\begin{equation}
\mathcal{J}_{\varepsilon}(u):=\varepsilon\int_{\Omega}\int_{\Omega
}J_{\varepsilon}(x-y)|\nabla u(x)-\nabla u(y)|^{2}dxdy\quad\text{for }u\in
W_{\operatorname*{loc}}^{1,2}(\Omega). \label{J functional}%
\end{equation}

In the sequel, we will use a localized version of (\ref{F}). To be precise,
given two open sets $A$, $B\subset\mathbb{R}^{n}$ we define
\begin{equation}
\mathcal{W}_{\varepsilon}(u,A):=\frac{1}{\varepsilon}\int_{A}W(u(x))~dx
\label{W local}%
\end{equation}
for $u\in L^{2}(A)$, and%
\begin{equation}
\mathcal{J}_{\varepsilon}(u,A,B):=\varepsilon\int_{A}\int_{B}J_{\varepsilon
}(x-y)|\nabla u(x)-\nabla u(y)|^{2}dxdy \label{J and W local}%
\end{equation}
for $u\in W_{\operatorname*{loc}}^{1,2}(A\cup B)$. When $A=B$ we set
\begin{equation}
\mathcal{F}_{\varepsilon}(u,A):=\mathcal{W}_{\varepsilon}(u,A)+\mathcal{J}%
_{\varepsilon}(u,A,A)\quad\text{and}\quad\mathcal{J}_{\varepsilon
}(u,A):=\mathcal{J}_{\varepsilon}(u,A,A)
\label{214bis}
\end{equation}for $u\in W_{\operatorname*{loc}}^{1,2}(A)\cap L^{2}(A)$.

Since $J$ is even, by Fubini's theorem for all $u\in W_{\operatorname*{loc}%
}^{1,2}(A\cup B)$ we have that
\begin{equation}
\mathcal{J}_{\varepsilon}(u,A,B)=\mathcal{J}_{\varepsilon}(u,B,A)~.
\label{J symm}%
\end{equation}
Moreover, if $A\cap B=\emptyset$ we have
\begin{equation}
\mathcal{J}_{\varepsilon}(u,A\cup B)=\mathcal{J}_{\varepsilon}%
(u,A)+2\mathcal{J}_{\varepsilon}(u,A,B)+\mathcal{J}_{\varepsilon}(u,B)~.
\label{J add}%
\end{equation}

In the compactness theorem we use a slicing argument based on the following
preliminary result. Given a vector $\xi\in\mathbb{S}^{n-1}$, the hyperplane
through the origin orthogonal to $\xi$ is denoted by $\Pi^{\xi}$, that is,
\begin{equation}
\Pi^{\xi}:=\{x\in\mathbb{R}^{n}:~x\cdot\xi=0\}~. \label{PI xi}%
\end{equation}
If $E\subset\mathbb{R}^{n}$ and $y\in\Pi^{\xi}$, then we define
\begin{equation}
E_{y}^{\xi}:=\{t\in\mathbb{R}:~y+t\xi\in E\}~. \label{E slice}%
\end{equation}

The next result is a particular case of the affine Blaschke--Petkantschin
formula, for which we refer to \cite[Theorem 7.2.7]{Sch}.

\begin{proposition}
\label{proposition blaschke}Let $E\subset\mathbb{R}^{n}$ be a Borel set and
let $g:E\times E\rightarrow\lbrack0,+\infty]$ be a Borel function. Then
\begin{align*}
\int_{E}  &  \int_{E}g(x,y)~dxdy\\
&  =\frac{1}{2}\int_{\mathbb{S}^{n-1}}\int_{\Pi^{\xi}}\int_{E_{z}^{\xi}}%
\int_{E_{z}^{\xi}}g(z+s\xi,z+t\xi)|t-s|^{n-1}dsdtd\mathcal{H}^{n-1}%
(z)d\mathcal{H}^{n-1}(\xi)~.
\end{align*}

\end{proposition}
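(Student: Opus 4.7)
The plan is to parametrize an ordered pair of distinct points $(x,y)\in E\times E$ by the line through them: a direction $\xi\in\mathbb{S}^{n-1}$, its foot of perpendicular $z\in\Pi^{\xi}$ from the origin, and two real parameters $s,t\in\mathbb{R}$ locating $x=z+s\xi$ and $y=z+t\xi$. The involution $(\xi,z,s,t)\mapsto(-\xi,z,-s,-t)$ sends this data to the same ordered pair, so the description is two-to-one, and this is the source of the factor $\tfrac{1}{2}$.

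First I would reduce to $g\geq 0$ by Tonelli, and work modulo the diagonal $\{x=y\}$, which has Lebesgue measure zero. Setting $h:=y-x$ and passing to polar coordinates $h=r\xi$ with $r>0$ and $\xi\in\mathbb{S}^{n-1}$ gives $dh=r^{n-1}\,dr\,d\mathcal{H}^{n-1}(\xi)$. Next, for each fixed $\xi$ the orthogonal decomposition $\mathbb{R}^{n}=\Pi^{\xi}\oplus\mathbb{R}\xi$ is an isometry, so writing $x=z+s\xi$ yields $dx=d\mathcal{H}^{n-1}(z)\,ds$. Substituting $t:=s+r$, so that $r^{n-1}=(t-s)^{n-1}$ under $t>s$, and observing that $\chi_{E}(z+s\xi)\chi_{E}(z+t\xi)=\chi_{E_{z}^{\xi}}(s)\chi_{E_{z}^{\xi}}(t)$, Fubini delivers the desired right-hand side, but restricted to the half-space $\{s<t\}$ and without the prefactor $\tfrac{1}{2}$.

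To lift the ordering constraint, I would invoke the involution $(\xi,z,s,t)\mapsto(-\xi,z,-s,-t)$. The spherical measure is invariant under the antipodal map, $\Pi^{-\xi}=\Pi^{\xi}$, the pair $(z+s\xi,z+t\xi)$ is preserved, and $E_{z}^{-\xi}=-E_{z}^{\xi}$, so after relabeling $s$ and $t$ the same identity holds with the constraint $\{t<s\}$ in place of $\{s<t\}$. Averaging the two identities and discarding the negligible set $\{s=t\}$ produces the unrestricted integral with weight $|t-s|^{n-1}$ and the factor $\tfrac{1}{2}$.

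The main obstacle I anticipate is the measure-theoretic bookkeeping when commuting the spherical and the orthogonal-slice decompositions, i.e.\ verifying that $r^{n-1}\,dr\,d\mathcal{H}^{n-1}(\xi)\,dx$ reassembles as $|t-s|^{n-1}\,ds\,dt\,d\mathcal{H}^{n-1}(z)\,d\mathcal{H}^{n-1}(\xi)$ once all the substitutions are carried out. A less explicit but shorter alternative would be to specialize the general affine Blaschke--Petkantschin formula \cite[Theorem 7.2.7]{Sch} to $k=2$ and track the normalization of the invariant measure on affine lines, which is exactly what the statement records.
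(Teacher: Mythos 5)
Your argument uses the same chain of substitutions the paper uses — difference variable, polar coordinates, orthogonal splitting $\mathbb{R}^n=\Pi^\xi\oplus\mathbb{R}\xi$ — and identifies the same source of the factor $\tfrac12$ (the direction parametrization is two-to-one); the paper simply runs the computation from right to left, starting from the sliced side, substituting $\tau=t-s$, collapsing $dz\,dt$ to $dy$, and then invoking spherical integration, whereas you run it from left to right and record the two-to-one issue via the involution $(\xi,z,s,t)\mapsto(-\xi,z,-s,-t)$. These are the same proof up to orientation and phrasing, so the proposal is correct and matches the paper's approach.
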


\begin{proof}
For the convenience of the reader we present a proof. We extend $g$ to be zero
outside $E\times E$. Using the change of variables $\tau=t-s$, we obtain%
\[
\int_{\mathbb{R}}g(z+s\xi,z+t\xi)|t-s|^{n-1}ds=\int_{\mathbb{R}}g(z+t\xi
-\tau\xi,z+t\xi)|\tau|^{n-1}d\tau~,
\]
and by Fubini's theorem we get%
\begin{align*}
\int_{\Pi^{\xi}}\int_{\mathbb{R}}\int_{\mathbb{R}}  &  g(z+s\xi,z+t\xi
)|t-s|^{n-1}dsdtd\mathcal{H}^{n-1}(z)\\
&  =\int_{\mathbb{R}^{n}}\int_{\mathbb{R}}g(y-\tau\xi,y)|\tau|^{n-1}d\tau dy~.
\end{align*}
Exchanging the order of integration and using integration in spherical
coordinates we have%
\begin{align*}
\frac{1}{2}\int_{\mathbb{S}^{n-1}}\int_{\Pi^{\xi}}\int_{\mathbb{R}}%
\int_{\mathbb{R}}  &  g(z+s\xi,z+t\xi)|t-s|^{n-1}dsdtd\mathcal{H}%
^{n-1}(z)d\mathcal{H}^{n-1}(\xi)\\
&  =\frac{1}{2}\int_{\mathbb{R}^{n}}\int_{\mathbb{S}^{n-1}}\int_{\mathbb{R}%
}g(y-\tau\xi,y)|\tau|^{n-1}d\tau d\mathcal{H}^{n-1}(\xi)dy\\
&  =\int_{\mathbb{R}^{n}}\int_{\mathbb{R}^{n}}g(x,y)~dxdy~,
\end{align*}
which concludes the proof.
\end{proof}

For $\xi\in\mathbb{S}^{n-1}$ and $\varepsilon>0$ define $J^{\xi}%
:\mathbb{R}\rightarrow\lbrack0,+\infty)$ by
\begin{equation}
J^{\xi}(t):=J(t\xi)|t|^{n-1}\quad\text{and}\quad J_{\varepsilon}^{\xi
}(t):=\frac{1}{\varepsilon}J^{\xi}\left(  \frac{t}{\varepsilon}\right)  ~.
\label{p1}%
\end{equation}

By (\ref{J1}) and using spherical coordinates, we have%
\begin{equation}
\int_{\mathbb{R}}J^{\xi}(t)(|t|\wedge|t|^{2})~dt<+\infty\label{p2}%
\end{equation}
for $\mathcal{H}^{n-1}$-a.e. $\xi\in\mathbb{S}^{n-1}$, and in view of
(\ref{J2}) we obtain
\begin{equation}
\int_{\alpha(\xi)}^{\beta(\xi)}\frac{1}{J^{\xi}(t)}~dt\leq c_{\!J}~.
\label{J2bis}%
\end{equation}
Moreover,
\begin{equation}
J_{\varepsilon}^{\xi}(t)=\frac{1}{\varepsilon}J^{\xi}\left(  \frac
{t}{\varepsilon}\right)  =\frac{1}{\varepsilon}J\left(  \frac{t\xi
}{\varepsilon}\right)  \left\vert \frac{t}{\varepsilon}\right\vert
^{n-1}=J_{\varepsilon}(t\xi)|t|^{n-1}~. \label{p3}%
\end{equation}

For $\xi\in\mathbb{S}^{n-1}$, $A\subset\mathbb{R}$, and $\varepsilon>0$, we
define
\begin{equation}
\mathcal{F}_{\varepsilon}^{\xi}(v,A):=\frac{1}{\sigma_{n-1}\varepsilon}%
\int_{A}W(v(t))~dt+\frac{\varepsilon}{2}\int_{A}\int_{A}J_{\varepsilon}^{\xi
}(s-t)(v^{\prime}(s)-v^{\prime}(t))^{2}dsdt \label{p5}%
\end{equation}
for $v\in W_{\operatorname*{loc}}^{1,2}(A)\cap L^{2}\left(  A\right)  $, where $\sigma_{n-1}:=
\mathcal{H}^{n-1}(\mathbb{S}^{n-1})$.

\section{Compactness and interpolation in dimension one}

\label{section compactness one}For a set $A$ contained in $\mathbb{R}^{n}$ and
for $\eta>0$ we define
\begin{equation}
\begin{array}[c]{ll}
(A)^{\eta}:=\{x\in\mathbb{R}^{n}:\operatorname*{dist}(x,A)<\eta\}~,
\\
(A)_{\eta}:=\{x\in\ A:~\operatorname*{dist}(x,\partial A)>\eta\}~. 
\end{array}
\label{F49}%
\end{equation}
The main result of this section is the following theorem.

\begin{theorem}
\label{theorem compactness}Let $\xi\in\mathbb{S}^{n-1}$, let $A\subset
\mathbb{R}$ be 
a bounded open set, and let $\{u_{\varepsilon}\}\subset
W_{\operatorname*{loc}}^{1,2}(A)\cap L^{2}\left(  A\right)  $ be such that%
\begin{equation}
M:=\sup_{\varepsilon}\mathcal{F}_{\varepsilon}^{\xi}(u_{\varepsilon
},A)<+\infty~, \label{c1}%
\end{equation}
where $\mathcal{F}_{\varepsilon}^{\xi}$ is defined in (\ref{p5}). Then there
exists a sequence $\varepsilon_{j}\rightarrow0^{+}$ such that
$\{u_{\varepsilon_{j}}\}$ converges in $L^{2}(A)$ to some function $u\in
BV(A;\{-1,1\})$. Moreover, there exists a constant $c_{J,W}>0$, independent of
$\xi$, $A$, and $\{u_{\varepsilon}\}$, such that
\begin{equation}
\#S_{u}\leq\frac{M}{c_{J,W}}~,\label{D10}%
\end{equation}
where $\#S_{u}$ denotes the number of jump points of $u$.
\end{theorem}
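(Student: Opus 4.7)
The plan is to reduce the one-dimensional nonlocal problem to the classical local Modica--Mortola framework, using the forthcoming interpolation Lemma~\ref{F33} as the bridge, and then conclude by the standard compactness argument. The first step is to invoke Lemma~\ref{F33} to extract, for every open $A'\Subset A$, a bound of the form
\[
\varepsilon\int_{A'} |u_\varepsilon'(t)|^{2}\, dt \leq c_{1}M\,,
\]
with $c_{1}$ depending only on $J$, $W$, and the geometry of $A'$ inside $A$, but not on $\varepsilon$. The non-degeneracy hypothesis (\ref{J2bis}) is exactly what enables this conversion of second-difference information about $u'$ into first-derivative $L^{2}$-information, and I expect the bulk of the technical work of this section to live in proving that lemma rather than in the present compactness argument.

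With $\varepsilon|u_\varepsilon'|^{2}$ and $\varepsilon^{-1}W(u_\varepsilon)$ both under control on $A'$, I would introduce the classical Modica--Mortola auxiliary function $\Phi(s):=\int_{0}^{s}\sqrt{W(r)}\,dr$, which by (\ref{W1})--(\ref{W2}) is continuous, strictly increasing on $\mathbb{R}$, and coercive. The inequality $2ab\leq a^{2}+b^{2}$ gives pointwise
\[
\bigl|(\Phi\circ u_\varepsilon)'(t)\bigr| = \sqrt{W(u_\varepsilon(t))}\,|u_\varepsilon'(t)| \leq \tfrac{\varepsilon}{2}|u_\varepsilon'(t)|^{2} + \tfrac{1}{2\varepsilon}W(u_\varepsilon(t)),
\]
so combining with the previous step and the potential bound in (\ref{c1}) yields $\int_{A'}|(\Phi\circ u_\varepsilon)'|\,dt \leq c_{2}M$. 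Hence $\{\Phi\circ u_\varepsilon\}$ is bounded in $BV(A')$; up to a subsequence it converges in $L^{1}(A')$ and a.e.\ to some $g$, and continuity of $\Phi^{-1}$ gives $u_{\varepsilon_{j}}\to\Phi^{-1}(g)=:u$ a.e. The coercivity (\ref{W2}) combined with (\ref{c1}) also yields $\int_{A}(|u_\varepsilon|-1)^{2}\,dt \leq c_{W}\sigma_{n-1}\varepsilon M \to 0$, so $|u|=1$ a.e. Writing $u_\varepsilon - u = (u_\varepsilon - \operatorname{sgn} u_\varepsilon) + (\operatorname{sgn} u_\varepsilon - u)$, the first summand tends to zero in $L^{2}(A)$ by the estimate just used, and the second by bounded convergence (since $u\in\{-1,+1\}$ a.e., $\operatorname{sgn} u_\varepsilon\to u$ a.e.). A diagonal extraction over an exhaustion $A'_{k}\Subset A$ therefore promotes the a.e.\ limit to convergence in $L^{2}(A)$.

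For the jump bound (\ref{D10}), I would apply lower semicontinuity of total variation: for every $A'\Subset A$,
\[
|D(\Phi\circ u)|(A') \leq \liminf_{j}|D(\Phi\circ u_{\varepsilon_{j}})|(A') \leq c_{2}M.
\]
Since $u\in BV(A;\{-1,1\})$, $\Phi\circ u$ jumps at each point of $S_{u}\cap A'$ by exactly $|\Phi(1)-\Phi(-1)|>0$, so $\#(S_{u}\cap A')\cdot|\Phi(1)-\Phi(-1)| \leq c_{2}M$; letting $A'\nearrow A$ yields (\ref{D10}) with $c_{J,W}:=|\Phi(1)-\Phi(-1)|/c_{2}$, which depends only on $J$ (through Lemma~\ref{F33}) and $W$ (through $\Phi$ and $c_{W}$), and crucially not on $\xi$ provided the constant in Lemma~\ref{F33} is uniform in $\xi$. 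The principal obstacle throughout is precisely the interpolation estimate of Step~1 together with its $\xi$-uniformity; once that is in hand, the remainder is the classical Modica--Mortola machinery applied to $\Phi\circ u_\varepsilon$.
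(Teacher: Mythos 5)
Your approach differs fundamentally from the paper's. You reduce the nonlocal problem to the classical Modica--Mortola framework via the interpolation Lemma~\ref{F33} and the auxiliary function $\Phi(s)=\int_0^s\sqrt{W(r)}\,dr$, whereas the paper does not invoke Lemma~\ref{F33} in this proof at all. Instead it works directly at the level of ``transition intervals'': it collects all intervals $(\sigma,\tau)\subset A$ over which $u_\varepsilon$ crosses from $-\tfrac12$ to $\tfrac12$ (or back), uses Lemma~\ref{lemma compactness} to show each such crossing costs energy at least $c_{J,W}$, deduces that the union of these intervals (enlarged by $\varepsilon\gamma_J$) has a uniformly bounded number of connected components each of diameter $O(\varepsilon)$, extracts a subsequence so that these components accumulate on a finite set $S$, and then shows $u_\varepsilon\to\pm1$ pointwise a.e.\ on each connected component of $A\setminus S$. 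Both routes ultimately hinge on Lemma~\ref{lemma compactness} --- yours indirectly, through the interpolation lemma --- but the structural bookkeeping is genuinely different. Your route has the virtue of reducing quickly to standard machinery; the paper's direct counting argument yields the jump bound (\ref{D10}) with the constant $c_{J,W}$ of Lemma~\ref{lemma compactness} itself, rather than the larger constant hidden inside Lemma~\ref{F33} after the Young-inequality step.

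One gap in your write-up: you assert that $\{\Phi\circ u_\varepsilon\}$ is bounded in $BV(A')$, but your Young-inequality computation only controls the seminorm $\int_{A'}|(\Phi\circ u_\varepsilon)'|$. Since $W$ is only assumed continuous with the one-sided bound $(|s|-1)^2\leq c_W W(s)$, it may grow arbitrarily fast, and then so does $\Phi$; thus $\|\Phi\circ u_\varepsilon\|_{L^1(A')}$ is not controlled a priori by the $L^2$ bound on $u_\varepsilon$, and BV compactness does not immediately apply. The fix is standard: either truncate $u_\varepsilon$ to $[-2,2]$ first (harmless, since $\int_A(|u_\varepsilon|-1)^2\leq c_W\sigma_{n-1}M\varepsilon\to 0$ means the truncation changes nothing in the limit and only decreases the derivative term), or observe via the $W$-bound and continuity of $u_\varepsilon$ that each connected component of $A'$ contains, for small $\varepsilon$, a point $t_0$ with $|u_\varepsilon(t_0)|<2$, which together with the seminorm bound pins down $\|\Phi\circ u_\varepsilon\|_{L^\infty}$. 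You should make one of these explicit before invoking compactness in $L^1(A')$.
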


Next we introduce some auxiliary lemmas that will be used in the proof of
Theorem \ref{theorem compactness}.

\begin{lemma}
\label{lemma 1}Let $\xi\in\mathbb{S}^{n-1}$, let $A\subset\mathbb{R}$ be an
open set, let $\varepsilon>0$, let $\alpha<\beta$, and let $u\in
W_{\operatorname*{loc}}^{1,2}((A)^{\varepsilon\gamma_{J}})$, where $\gamma
_{J}$ is the constant in (\ref{J1a}). Then for a.e. $t\in A$,%
\begin{align}
\varepsilon &  \int_{t-\varepsilon\beta}^{t-\varepsilon\alpha}J_{\varepsilon
}^{\xi}(t-s)(u^{\prime}(t)-u^{\prime}(s))^{2}ds\nonumber\\
&  \geq\varepsilon(\beta-\alpha)^{2}\left(  \int_{\alpha}^{\beta}\frac
{1}{J^{\xi}(z)}~dz\right)  ^{-1}\left(  u^{\prime}(t)-\frac{u(t-\varepsilon
\alpha)-u(t-\varepsilon\beta)}{\varepsilon(\beta-\alpha)}\right)  ^{2},
\label{F32}%
\end{align}
where $J^{\xi}$ and $J_{\varepsilon}^{\xi}$ are defined in (\ref{p1}).
\end{lemma}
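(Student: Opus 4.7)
The plan is to reduce the estimate to a one-variable Cauchy--Schwarz (Hölder) inequality after a change of variables that normalizes the scale $\varepsilon$.

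First I would pick a point $t \in A$ at which $u'$ exists (this is a full-measure set) and make the substitution $z = (t-s)/\varepsilon$ in the integral on the left-hand side. Since $s = t - \varepsilon z$ and $ds = -\varepsilon\,dz$, the bounds $s \in [t-\varepsilon\beta, t-\varepsilon\alpha]$ become $z \in [\alpha,\beta]$, and using $J_\varepsilon^\xi(\varepsilon z) = \varepsilon^{-1} J^\xi(z)$ the left-hand side transforms into
\[
\varepsilon \int_\alpha^\beta J^\xi(z)\bigl(u'(t) - u'(t-\varepsilon z)\bigr)^2 dz.
\]
Note that the hypothesis $u \in W^{1,2}_{\mathrm{loc}}((A)^{\varepsilon \gamma_J})$ together with the bounds $|\alpha|,|\beta|\leq \gamma_J$ implicit from \eqref{J1a} guarantees that $u(t-\varepsilon z)$ and $u'(t-\varepsilon z)$ make sense for a.e.\ $z \in [\alpha,\beta]$.

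Next I would apply the Cauchy--Schwarz inequality to the integrand, writing it as a product $[(J^\xi)^{1/2}(u'(t)-u'(t-\varepsilon z))] \cdot [(J^\xi)^{-1/2}]$: for a.e.\ $t$,
\[
\left(\int_\alpha^\beta \bigl(u'(t) - u'(t-\varepsilon z)\bigr) dz\right)^2
\leq \int_\alpha^\beta J^\xi(z)\bigl(u'(t) - u'(t-\varepsilon z)\bigr)^2 dz \cdot \int_\alpha^\beta \frac{1}{J^\xi(z)}\, dz,
\]
so the integral above is bounded below by the left-hand square divided by $\int_\alpha^\beta (J^\xi)^{-1}\,dz$.

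Finally, I would compute the linear integral explicitly: by the fundamental theorem of calculus,
\[
\int_\alpha^\beta \bigl(u'(t) - u'(t-\varepsilon z)\bigr)dz
= (\beta-\alpha)\,u'(t) - \frac{u(t-\varepsilon\alpha) - u(t-\varepsilon\beta)}{\varepsilon},
\]
so squaring produces the factor $(\beta-\alpha)^2 \bigl(u'(t) - \frac{u(t-\varepsilon\alpha)-u(t-\varepsilon\beta)}{\varepsilon(\beta-\alpha)}\bigr)^2$. Combining the three steps yields \eqref{F32}. There is no real obstacle here; the only subtle point is verifying that the substitution and the difference-quotient identity are valid in the Sobolev sense, which follows from absolute continuity of $u$ on a.e.\ line and the choice of $t$ in the Lebesgue set of $u'$.
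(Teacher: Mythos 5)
Your proof is correct. The change of variables, the Cauchy--Schwarz splitting, and the identity
\[
\int_\alpha^\beta\bigl(u'(t)-u'(t-\varepsilon z)\bigr)\,dz
=(\beta-\alpha)\Bigl(u'(t)-\frac{u(t-\varepsilon\alpha)-u(t-\varepsilon\beta)}{\varepsilon(\beta-\alpha)}\Bigr)
\]
all check out, and the degenerate situation in which $J^\xi$ vanishes on a set of positive measure in $[\alpha,\beta]$ is harmless: then $\int_\alpha^\beta (J^\xi)^{-1}\,dz=+\infty$, the right-hand side of \eqref{F32} is zero, and the estimate is trivial.

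The route differs from the paper's. The paper fixes $\lambda=u'(t)$ and identifies the right-hand side as the minimum of $\int_{t-\varepsilon\beta}^{t-\varepsilon\alpha}J_\varepsilon^\xi(t-s)(\lambda-v'(s))^2\,ds$ over $v\in W^{1,2}$ with the boundary values of $u$, using the Euler--Lagrange equation $(\lambda-v')J_\varepsilon^\xi=\mathrm{const}$ to compute the optimal value. Your approach replaces that variational argument by the dual observation: Cauchy--Schwarz applied to $\sqrt{J^\xi}(u'(t)-u'(t-\varepsilon z))$ and $1/\sqrt{J^\xi}$ gives exactly the same sharp constant $(\int(J^\xi)^{-1})^{-1}$. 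The two are equivalent in spirit (one is the primal, the other the dual formulation of the same extremal problem), but yours is more elementary: it avoids invoking the Euler--Lagrange machinery and sidesteps any question of whether the minimizer is attained, which one would need to address to make the paper's sketch fully rigorous. It also makes transparent that the inequality holds even when $J^\xi$ degenerates on $[\alpha,\beta]$.
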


\begin{proof}
It is enough to show that for every $\lambda\in\mathbb{R}$ we have%
\begin{align}
\varepsilon &  \int_{t-\varepsilon\beta}^{t-\varepsilon\alpha}J_{\varepsilon
}^{\xi}(t-s)(\lambda-u^{\prime}(s))^{2}ds\nonumber\\
&  \geq\varepsilon(\beta-\alpha)^{2}\left(  \int_{\alpha}^{\beta}\frac
{1}{J^{\xi}(z)}~dz\right)  ^{-1}\left(  \lambda-\frac{u(t-\varepsilon
\alpha)-u(t-\varepsilon\beta)}{\varepsilon(\beta-\alpha)}\right)
^{2}~.\nonumber
\end{align}
This inequality follows by considering the Euler--Lagrange equation of the
minimum problem%
\[
\min\int_{t-\varepsilon\beta}^{t-\varepsilon\alpha}J_{\varepsilon}^{\xi
}(t-s)(\lambda-v^{\prime}(s))^{2}ds
\]
over all $v\in W^{1,2}((t-\varepsilon\beta,t-\varepsilon\alpha))$ satisfying
$v(t-\varepsilon\beta)=u(t-\varepsilon\beta)$ and $v(t-\varepsilon
\alpha)=u(t-\varepsilon\alpha)$.
\end{proof}

\begin{remark}
\label{remark 1}Under the same assumptions of Lemma \ref{lemma 1}, it follows
from (\ref{J1a}), (\ref{J2}), and (\ref{F32}) that
\begin{align*}
\varepsilon(u^{\prime}(t))^{2}\leq{}  &  \frac{2}{\delta_{\!J}^{2}}\frac
{1}{\varepsilon}\big(u(t-\varepsilon\alpha(\xi))-u(t-\varepsilon\beta
(\xi))\big)^{2}\\
&  +2c_{\!J}\varepsilon\int_{t-\varepsilon\gamma_{J}%
}^{t+\varepsilon\gamma_{J}}J_{\varepsilon}^{\xi}(t-s)(u^{\prime}(t)-u^{\prime
}(s))^{2}ds
\end{align*}
for a.e. $t\in A$.
\end{remark}

\begin{lemma}
\label{lemma compactness} Let $\gamma_{J}$ be the constant in (\ref{J1a}).
Then there exists a constant $c_{J,W}>0$ such that
\begin{equation}
\varepsilon\int_{\sigma}^{\tau}\int_{\sigma-\varepsilon\gamma_{J}}%
^{\tau+\varepsilon\gamma_{J}}J_{\varepsilon}^{\xi}(t-s)(u^{\prime
}(t)-u^{\prime}(s))^{2}dsdt+\frac{1}{\varepsilon}\int_{\sigma-\varepsilon
\gamma_{J}}^{\tau+\varepsilon\gamma_{J}}W(u(t))~dt\geq c_{J,W} \label{c2aa}%
\end{equation}
for every $\xi\in\mathbb{S}^{n-1}$, for every $\varepsilon>0$, for every
$\sigma$, $\tau$, with $\sigma<\tau$, and for every $u\in
W_{\operatorname*{loc}}^{1,2}((\sigma-\varepsilon\gamma_{J},\tau
+\varepsilon\gamma_{J}))$
%satisfying
%\begin{equation}
%\mathcal{F}_{\varepsilon}^{\xi}(u, (\sigma-\varepsilon\gamma_{J},
%\tau + \varepsilon\gamma_{J})
%\leq M
%\label{F45}%
%\end{equation}
%and
such that
\begin{equation}
u(t)\in\left(  -\tfrac{1}{2},\tfrac{1}{2}\right)  \text{ for every }%
t\in(\sigma,\tau)~, \label{c2}%
\end{equation}
and either%
\begin{equation}
u(\sigma)=-\tfrac{1}{2}\quad\text{and}\quad u(\tau)=\tfrac{1}{2} \label{c2ab}%
\end{equation}
or
\begin{equation}
u(\sigma)=\tfrac{1}{2}\quad\text{and}\quad u(\tau)=-\tfrac{1}{2}~.
\label{c2ab-bis}%
\end{equation}

\end{lemma}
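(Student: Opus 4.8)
The plan is to show that whenever a function $u$ makes a full transition between $-\tfrac12$ and $\tfrac12$ across $(\sigma,\tau)$, a fixed positive amount of energy must be spent, uniformly in $\varepsilon$, $\xi$, $\sigma$, $\tau$. The key observation is a scaling reduction: by the substitution $t=\sigma+\varepsilon r$, $s=\sigma+\varepsilon p$ and setting $v(r):=u(\sigma+\varepsilon r)$, both terms on the left-hand side of \eqref{c2aa} become scale-invariant. Indeed, $v'(r)=\varepsilon u'(\sigma+\varepsilon r)$, and using \eqref{p1} one checks that
\[
\varepsilon\int_{\sigma}^{\tau}\int_{\sigma-\varepsilon\gamma_{J}}^{\tau+\varepsilon\gamma_{J}}J_{\varepsilon}^{\xi}(t-s)(u'(t)-u'(s))^{2}\,dsdt
=\int_{0}^{L}\int_{-\gamma_{J}}^{L+\gamma_{J}}J^{\xi}(r-p)(v'(r)-v'(p))^{2}\,dpdr,
\]
where $L:=(\tau-\sigma)/\varepsilon>0$, and similarly $\tfrac1\varepsilon\int_{\sigma-\varepsilon\gamma_{J}}^{\tau+\varepsilon\gamma_{J}}W(u)\,dt=\int_{-\gamma_{J}}^{L+\gamma_{J}}W(v(r))\,dr$. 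Thus it suffices to bound below
\[
\mathcal{G}(v):=\int_{0}^{L}\int_{-\gamma_{J}}^{L+\gamma_{J}}J^{\xi}(r-p)(v'(r)-v'(p))^{2}\,dpdr+\int_{-\gamma_{J}}^{L+\gamma_{J}}W(v(r))\,dr
\]
by a positive constant, over all $L>0$ and all $v\in W^{1,2}_{\mathrm{loc}}$ with $v(r)\in(-\tfrac12,\tfrac12)$ on $(0,L)$ and $v(0)=-\tfrac12$, $v(L)=\tfrac12$ (or the reversed boundary values).

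I would split into two regimes according to the size of $L$. If $L\geq\delta_{\!J}$, then since $v$ travels from $-\tfrac12$ to $\tfrac12$ there is a subinterval of $(0,L)$ of length $\geq\delta_{\!J}$ on which $|v|\leq\tfrac12$; combined with the non-degeneracy constant $m_{W}=\min_{\{||s|-1|\ge 1/2\}}W(s)>0$ from \eqref{F30}, the potential term alone gives $\int W(v)\geq m_{W}\delta_{\!J}$, a positive constant. (Even more crudely, $\{|v|\le 1/2\}$ has measure at least $L\ge \delta_J$, so $\int W(v)\ge m_W\delta_J$ always works in this regime.) If $L<\delta_{\!J}$, the potential term may be tiny, so the kinetic (nonlocal) term must carry the estimate: here I would use Lemma \ref{lemma 1}. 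Apply \eqref{F32} with $\alpha=\alpha(\xi)$, $\beta=\beta(\xi)$ at scale $\varepsilon=1$, integrate over $r\in(0,L)$, use \eqref{J2bis} to bound $\bigl(\int_\alpha^\beta 1/J^\xi\bigr)^{-1}\geq 1/c_{\!J}$ and $\beta-\alpha\geq\delta_{\!J}$, obtaining
\[
\int_{0}^{L}\int_{-\gamma_J}^{L+\gamma_J}J^{\xi}(r-p)(v'(r)-v'(p))^{2}\,dpdr
\geq\frac{\delta_{\!J}^{2}}{c_{\!J}}\int_{0}^{L}\Bigl(v'(r)-\frac{v(r-\alpha)-v(r-\beta)}{\beta-\alpha}\Bigr)^{2}dr.
\]
On the other hand $\int_0^L v'(r)\,dr=v(L)-v(0)=\pm1$, while, since $r-\alpha,r-\beta\in[-\gamma_J,L+\gamma_J]$ and $|v|\le 1$ there, the "displacement" term $\frac{v(r-\alpha)-v(r-\beta)}{\beta-\alpha}$ has $L^2$-norm on $(0,L)$ at most $\frac{2}{\delta_J}L^{1/2}$, which is small once $L$ is small. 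By the triangle inequality in $L^2(0,L)$, $\bigl\|v'-\tfrac{v(\cdot-\alpha)-v(\cdot-\beta)}{\beta-\alpha}\bigr\|_{L^2(0,L)}\geq \|v'\|_{L^2(0,L)}-\tfrac{2}{\delta_J}L^{1/2}\geq L^{-1/2}-\tfrac{2}{\delta_J}L^{1/2}$ (using $\|v'\|_{L^2(0,L)}\ge L^{-1/2}|v(L)-v(0)|=L^{-1/2}$ by Cauchy--Schwarz), so the kinetic term is at least $\frac{\delta_J^2}{c_J}\bigl(L^{-1/2}-\tfrac{2}{\delta_J}L^{1/2}\bigr)^2$, which is bounded below by a positive constant (in fact blows up) as $L\to 0^+$.

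Combining the two regimes, and choosing $c_{J,W}$ to be the minimum of the two bounds — e.g. $c_{J,W}:=\min\{m_{W}\delta_{\!J},\ \inf_{0<L<\delta_J}\frac{\delta_J^2}{c_J}(L^{-1/2}-\tfrac{2}{\delta_J}L^{1/2})^2\}$, a positive constant depending only on $J$ and $W$ — yields \eqref{c2aa}. I expect the main technical point to be the careful bookkeeping in the small-$L$ regime: one must verify that the arguments $r-\alpha(\xi)$, $r-\beta(\xi)$ stay inside the enlarged interval $(-\gamma_J,L+\gamma_J)$ where $v$ is controlled (this uses \eqref{J1a}, namely $|\alpha(\xi)|,|\beta(\xi)|\le\gamma_J$), and that the constant extracted from \eqref{J2bis} is genuinely uniform in $\xi$. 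The two boundary-condition cases \eqref{c2ab} and \eqref{c2ab-bis} are handled identically since only $|v(L)-v(0)|=1$ enters. Note that for the potential bound one only needs $v\in(-\tfrac12,\tfrac12)$ on $(0,L)$, which is exactly hypothesis \eqref{c2}.
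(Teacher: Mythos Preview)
Your rescaling reduction and the large-$L$ regime are fine, but the small-$L$ argument has a genuine gap. You assert that for $r\in(0,L)$ the points $r-\alpha(\xi)$, $r-\beta(\xi)$ lie in $[-\gamma_J,L+\gamma_J]$ \emph{and} that $|v|\le 1$ there. The first claim follows from \eqref{J1a}, but the second is simply not given: the hypotheses only constrain $v$ on $(0,L)$ (where $|v|<\tfrac12$), and on the buffer $(-\gamma_J,0)\cup(L,L+\gamma_J)$ the function $v$ may take arbitrarily large values. Consequently your bound $\bigl\|\tfrac{v(\cdot-\alpha)-v(\cdot-\beta)}{\beta-\alpha}\bigr\|_{L^2(0,L)}\le \tfrac{2}{\delta_J}L^{1/2}$ is unjustified, and with it the whole kinetic lower bound collapses. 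The difficulty is real: a single large value of $v$ in the buffer does not by itself yield a potential bound on a set of definite measure, so one cannot simply split into ``$|v|\le 1$ on the buffer'' versus ``$|v|>1$ somewhere''.

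The paper closes exactly this gap by a different dichotomy. In the small-$L$ regime it introduces intermediate parameters $\alpha(\xi)<\hat\alpha<\hat\beta<\beta(\xi)$ and distinguishes two cases at each point $t$ of large derivative: either one can choose $\alpha\in[\alpha(\xi),\hat\alpha]$, $\beta\in[\hat\beta,\beta(\xi)]$ making the difference quotient small relative to $|u'(t)|$ (then Lemma~\ref{lemma 1} gives the kinetic bound as you intend), or the difference quotient is large for \emph{all} such $\alpha,\beta$. In the latter case one deduces $|u(t_0-\varepsilon\alpha)-u(t_0-\varepsilon\beta)|\ge 4$ for all $\alpha,\beta$ in these ranges, which forces $|u|\ge 2$ on a full subinterval of the buffer of length at least $\varepsilon\delta_J/4$; then \eqref{W2} supplies a potential bound on that interval. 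Varying $\alpha,\beta$ over intervals, rather than fixing them, is the missing idea.

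A secondary issue: even granting $|v|\le 1$, your proposed constant $\inf_{0<L<\delta_J}\frac{\delta_J^2}{c_J}\bigl(L^{-1/2}-\tfrac{2}{\delta_J}L^{1/2}\bigr)^2$ equals $0$, since the bracket vanishes at $L=\delta_J/2$ (and the reverse triangle inequality gives nothing for $L>\delta_J/2$). You would need to split at a smaller threshold, say $L_0<\delta_J/2$, and use the potential bound $\ge m_W L_0$ on $[L_0,\infty)$.
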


\begin{proof}
Fix $\xi$, $\varepsilon$, $\sigma$, $\tau$, and $u$ as in the statement of the
lemma, and let $\hat{\alpha}$ and $\hat{\beta}$ be such that $\alpha(\xi
)<\hat{\alpha}<\hat{\beta}<\beta(\xi)$, and
\begin{equation}
\alpha(\xi)-\hat{\alpha}>\frac{1}{4}\delta_{\!J},\quad\hat{\beta}-\hat{\alpha
}>\frac{1}{4}\delta_{\!J},\quad\beta(\xi)-\hat{\beta}>\frac{1}{4}\delta
_{\!J}~, \label{F51}%
\end{equation}
where $\delta_{\!J}$ is the constant in (\ref{J1a}). By (\ref{W2}) and
(\ref{c2}), we have $W(u(t))\geq\frac{1}{4C_{W}}$ for every $t\in(\sigma
,\tau)$. Therefore, if $\tau-\sigma>\varepsilon\delta_{\!J}/2^{6}$,
then
\begin{equation}
\frac{1}{\varepsilon}\int_{\sigma}^{\tau}W(u_{\varepsilon}(t))~dt>\frac
{\delta_{\!J}}{2^{8}C_{W}}~. \label{c3}%
\end{equation}
If $\tau-\sigma\leq\varepsilon\delta_{\!J}/2^{6}$, define%
\begin{equation}
A_0:=\left\{  t\in(\sigma,\tau):~|u^{\prime}(t)|\geq\frac{1}{2}\frac{1}%
{\tau-\sigma}\right\}  ~. \label{c4}%
\end{equation}
We consider now two cases.

\noindent\textbf{Case 1: }Assume that for every $t\in A_0$ there exist
$\alpha\in\lbrack\alpha(\xi),\hat{\alpha}]$ and $\beta\in\lbrack\hat{\beta
},\beta(\xi)]$ such that%
\[
\frac{|u(t-\varepsilon\alpha)-u(t-\varepsilon\beta)|}{\varepsilon(\beta
-\alpha)}<\frac{1}{2}|u^{\prime}(t)|~.
\]
Then
\[
\left(  u^{\prime}(t)-\frac{u(t-\varepsilon\alpha)-u(t-\varepsilon\beta
)}{\varepsilon(\beta-\alpha)}\right)  ^{2}\geq\frac{1}{4}(u^{\prime}%
(t))^{2}~.
\]
Therefore, by Lemma \ref{lemma 1},
\begin{align*}
\varepsilon\int_{t-\varepsilon\beta}^{t-\varepsilon\alpha}  &  J_{\varepsilon
}^{\xi}(t-s)(u^{\prime}(t)-u^{\prime}(s))^{2}ds\\
&  \geq\frac{\varepsilon(\beta-\alpha)^{2}}{4}\left(  \int_{\alpha}^{\beta
}\frac{1}{J^{\xi}(z)}~dz\right)  ^{-1}(u^{\prime}(t))^{2}~,
\end{align*}
and integrating over $A_0$, using (\ref{J2bis}) and (\ref{F51}), we obtain
\begin{equation}
\varepsilon\int_{A_0}\int_{t-\varepsilon\beta(\xi)}^{t-\varepsilon\alpha(\xi
)}J_{\varepsilon}^{\xi}(t-s)(u^{\prime}(t)-u^{\prime}(s))^{2}dsdt\geq
\frac{\varepsilon\delta_{\!J}^{2}}{2^{6}c_{\!J}}\int_{A_0}(u^{\prime}%
(t))^{2}dt~. \label{c5}%
\end{equation}
By (\ref{c2ab}), (\ref{c2ab-bis}), and (\ref{c4}) using Jensen's inequality
and $\tau-\sigma\leq\frac{\delta_{\!J}}{2^{6}}\varepsilon$, we have%
\[
\int_{A_0}(u^{\prime}(t))^{2}dt=\int_{\sigma}^{\tau}(u^{\prime}(t))^{2}%
dt-\int_{(\sigma,\tau)\setminus A_0}\!\!\!\!\!\!\!\!\!\!\!\!(u^{\prime}%
(t))^{2}dt\geq\frac{1}{\tau-\sigma}-\frac{1}{4}\frac{1}{\tau-\sigma}\geq
\frac{3\cdot2^{4}}{\varepsilon\delta_{\!J}}~.
\]
Hence, from (\ref{c5}) we deduce that
\begin{equation}
\varepsilon\int_{\sigma}^{\tau}\int_{\sigma-\varepsilon\beta(\xi)}%
^{\tau-\varepsilon\alpha(\xi)}J_{\varepsilon}^{\xi}(t-s)(u^{\prime
}(t)-u^{\prime}(s))^{2}dsdt\geq\frac{3}{4}\frac{\delta_{\!J}}{c_{\!J}}~.
\label{c6}%
\end{equation}

\noindent\textbf{Case 2: }It remains to study the case in which there exists
$t_{0}\in A_0$ such that
\[
\frac{|u(t_{0}-\varepsilon\alpha)-u(t_{0}-\varepsilon\beta)|}{\varepsilon
(\beta-\alpha)}\geq\frac{1}{2}|u_{\varepsilon}^{\prime}(t_{0})|
\]
for every $\alpha\in\lbrack\alpha(\xi),\hat{\alpha}]$ and for every $\beta
\in\lbrack\hat{\beta},\beta(\xi)]$. By (\ref{c4}) and the inequality
$\tau-\sigma\leq\varepsilon\delta_{\!J}/2^{6}$, we have
\[
\frac{|u(t_{0}-\varepsilon\alpha)-u(t_{0}-\varepsilon\beta)|}{\varepsilon
(\beta-\alpha)}\geq\frac{1}{4(\tau-\sigma)}\geq\frac{16}{\varepsilon\delta_{\!J}}\,,
\]
hence by (\ref{F51}),%
\[
|u(t_{0}-\varepsilon\alpha)-u(t_{0}-\varepsilon\beta)|\geq\frac{16(\hat{\beta
}-\hat{\alpha})}{\delta_{\!J}}\geq4~.
\]

If $|u(t_{0}-\varepsilon\alpha)|\geq2$ for every $\alpha\in\lbrack\alpha
(\xi),\hat{\alpha}]$, then by (\ref{W2}) we have $W(u(t_{0}-\varepsilon
\alpha))\geq\frac{1}{c_{W}}$ for every $\alpha\in\lbrack\alpha(\xi
),\hat{\alpha}]$. This leads to $W(u(t))\geq\frac{1}{c_{W}}$ for every
$t\in\lbrack t_{0}-\varepsilon\hat{\alpha},t_{0}-\varepsilon\alpha(\xi)]$,
hence
\begin{equation}
\frac{1}{\varepsilon}\int_{\sigma-\varepsilon\gamma_{J}}^{\tau+\varepsilon
\gamma_{J}}\!\!\!\! \!\!\!\! \!\!\!\! W(u(t))~dt\geq\frac{1}{\varepsilon}%
\int_{t_{0}-\varepsilon\hat{\alpha}}^{t_{0}-\varepsilon\alpha(\xi)}\!\!\!\!
\!\!\!\! \!\!\!\! W(u(t))~dt\geq\frac{\hat{\alpha}-\alpha(\xi)}{c_{W}}
\geq\frac{\delta_{\!J}}{4c_{W}}~, \label{c7bis}%
\end{equation}
where in the last inequality we used (\ref{F51}).

If there exists $\alpha\in\lbrack\alpha(\xi),\hat{\alpha}]$ such that
$|u(t_{0}-\varepsilon\alpha)|<2$, then $|u(t_{0}-\varepsilon\beta)|>2$ for
every $\beta\in\lbrack\hat{\beta},\beta_{J}]$ (if not, there exists $\beta
\in\lbrack\hat{\beta},\beta(\xi)]$ such that $|u(t_{0}-\varepsilon\beta
)|\leq2$, which gives $|u(t_{0}-\varepsilon\alpha)-u(t_{0}-\varepsilon
\beta)|<4$, a contradiction). Consequently, for every $\beta\in\lbrack
\hat{\beta},\beta(\xi)]$ we have $W(u(t_{0}-\varepsilon\beta))\geq\frac
{1}{c_{W}}$. This leads to $W(u(t))\geq\frac{1}{c_{W}}$ for every $t\in\lbrack
t_{0}-\varepsilon\beta(\xi),t_{0}-\varepsilon\hat{\beta}]$, hence
\begin{equation}
\frac{1}{\varepsilon}\int_{\sigma-\varepsilon\gamma_{J}}^{\tau+\varepsilon
\gamma_{J}}\!\!\!\!\!\!\!\!\!\!\!\!W(u(t))~dt\geq\frac{1}{\varepsilon}%
\int_{t_{0}-\varepsilon\beta(\xi)}^{t_{0}-\varepsilon\hat{\beta}%
}\!\!\!\!\!\!\!\!\!\!\!\!W(u(t))~dt\geq\frac{\beta(\xi)-\hat{\beta}}{c_{W}%
}\ \geq\frac{\delta_{\!J}}{4c_{W}}~, \label{c7}%
\end{equation}
where in the last inequality we used (\ref{F51}). The conclusion follows now
from (\ref{c3}), (\ref{c6}), (\ref{c7bis}), and (\ref{c7}).
\end{proof}

\begin{lemma}
[Interpolation inequality in dimension one]\label{F33} There exists a constant
$c_{J,W}^{(1)}$ such that
\begin{equation}
\varepsilon\int_{A}(u^{\prime}(t))^{2}dt\leq c_{J,W}^{(1)}\mathcal{F}%
_{\varepsilon}^{\xi}(u,({A)}^{2\varepsilon\gamma_{J}})~. \label{F35}%
\end{equation}
for every $\xi\in\mathbb{S}^{n-1}$, for every $\varepsilon>0$, for every open
set $A\subset\mathbb{R}$, and for every $u\in W_{\operatorname*{loc}}%
^{1,2}((A)^{2\varepsilon\gamma_{J}})$, where $\gamma_{J}$ is the constant in
(\ref{J1a}).
\end{lemma}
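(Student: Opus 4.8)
The plan is to derive the estimate pointwise from Remark \ref{remark 1} and then integrate over $A$, controlling the two resulting terms by the full one-dimensional energy $\mathcal{F}_\varepsilon^\xi(u,(A)^{2\varepsilon\gamma_J})$. Recall that Remark \ref{remark 1} gives, for a.e.\ $t\in A$,
\[
\varepsilon(u'(t))^2\leq\frac{2}{\delta_{\!J}^2}\,\frac1\varepsilon\bigl(u(t-\varepsilon\alpha(\xi))-u(t-\varepsilon\beta(\xi))\bigr)^2+2c_{\!J}\,\varepsilon\int_{t-\varepsilon\gamma_J}^{t+\varepsilon\gamma_J}J_\varepsilon^\xi(t-s)(u'(t)-u'(s))^2\,ds.
\]
Integrating in $t$ over $A$, the second term is immediately bounded by $2c_{\!J}$ times the nonlocal part of $\mathcal{F}_\varepsilon^\xi(u,(A)^{2\varepsilon\gamma_J})$: since $|\alpha(\xi)|,|\beta(\xi)|\le\gamma_J$ and $t\in A$, both $t$ and $s$ range in $(A)^{\varepsilon\gamma_J}\subset(A)^{2\varepsilon\gamma_J}$, so after symmetrizing and inserting the factor $\tfrac12$ in the definition \eqref{p5} we get the correct form up to the constant $\sigma_{n-1}$ normalization, which is harmless.

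The real work is the first term, $\dfrac{1}{\varepsilon}\displaystyle\int_A\bigl(u(t-\varepsilon\alpha(\xi))-u(t-\varepsilon\beta(\xi))\bigr)^2\,dt$. Writing $u(t-\varepsilon\alpha(\xi))-u(t-\varepsilon\beta(\xi))=\varepsilon\int_{\alpha(\xi)}^{\beta(\xi)}u'(t-\varepsilon r)\,dr$ and applying Jensen (or Cauchy--Schwarz), this is bounded by $\varepsilon(\beta(\xi)-\alpha(\xi))\int_{\alpha(\xi)}^{\beta(\xi)}\int_A (u'(t-\varepsilon r))^2\,dt\,dr\le 2\varepsilon\gamma_J(\beta(\xi)-\alpha(\xi))\int_{(A)^{2\varepsilon\gamma_J}}(u'(s))^2\,ds$. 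This merely bounds $\varepsilon\int_A(u')^2$ by $\varepsilon\int_{(A)^{2\varepsilon\gamma_J}}(u')^2$ up to a constant, which is circular — it does not close unless $A=(A)^{2\varepsilon\gamma_J}$. So this naive route fails, and the honest approach must instead bound the difference quotient by the potential term $\mathcal{W}_\varepsilon$, not the gradient term.

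The correct strategy, then, is the one underlying Lemma \ref{lemma compactness}: the difference $u(t-\varepsilon\alpha(\xi))-u(t-\varepsilon\beta(\xi))$ can only be large if $u$ travels a definite distance on the interval $[t-\varepsilon\beta(\xi),t-\varepsilon\alpha(\xi)]$ of length $\varepsilon(\beta(\xi)-\alpha(\xi))\ge\varepsilon\delta_{\!J}$, and in that case either $u$ spends a comparable portion of that interval with $||u|-1|\ge\tfrac12$ (so $W(u)$ is bounded below by $m_W$ there, feeding $\mathcal{W}_\varepsilon$), or else $u'$ is genuinely large on a set of comparable measure inside $[t-\varepsilon\beta(\xi),t-\varepsilon\alpha(\xi)]$, which we re-feed into the nonlocal term via Lemma \ref{lemma 1} exactly as in Case 1 of Lemma \ref{lemma compactness}. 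Concretely, I would prove the pointwise-in-$t$ auxiliary estimate
\[
\frac1\varepsilon\bigl(u(t-\varepsilon\alpha(\xi))-u(t-\varepsilon\beta(\xi))\bigr)^2\le C\Bigl(\tfrac1\varepsilon\!\int_{t-\varepsilon\gamma_J}^{t+\varepsilon\gamma_J}\!\!W(u)\,ds+\varepsilon\!\int_{t-\varepsilon\gamma_J}^{t+\varepsilon\gamma_J}\!\!\!\int_{t-\varepsilon\gamma_J}^{t+\varepsilon\gamma_J}\!\!\!J_\varepsilon^\xi(r-s)(u'(r)-u'(s))^2\,dr\,ds\Bigr),
\]
with $C=C(\delta_{\!J},c_{\!J},c_W)$, by splitting into the two cases according to whether or not there exist $\alpha\in[\alpha(\xi),\hat\alpha]$, $\beta\in[\hat\beta,\beta(\xi)]$ with $|u(t-\varepsilon\alpha)-u(t-\varepsilon\beta)|$ at least half of $|u(t-\varepsilon\alpha(\xi))-u(t-\varepsilon\beta(\xi))|$ times $\tfrac{\beta-\alpha}{\beta(\xi)-\alpha(\xi)}$ — essentially recycling the Case 1 / Case 2 dichotomy of Lemma \ref{lemma compactness}, but now quantitatively rather than just below a fixed constant. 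Integrating this over $t\in A$ and combining with Remark \ref{remark 1} yields \eqref{F35}; the main obstacle is carrying out the two-case quantitative argument cleanly, keeping track of the overlap when one integrates the localized double integrals over $t$ (each pair $(r,s)$ is counted for a $t$-interval of length $O(\varepsilon)$, producing the extra factor of $\varepsilon$ that matches the $\varepsilon$ in front of the nonlocal term), and making sure every domain of integration stays inside $(A)^{2\varepsilon\gamma_J}$.
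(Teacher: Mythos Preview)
Your overall strategy---start from Remark~\ref{remark 1}, integrate, and control the difference-quotient term by the full energy---is exactly right, and you correctly diagnose that the na\"ive Jensen bound is circular. But the pointwise auxiliary inequality you propose,
\[
\tfrac1\varepsilon\bigl(u(t-\varepsilon\alpha(\xi))-u(t-\varepsilon\beta(\xi))\bigr)^2\le C\Bigl(\tfrac1\varepsilon\!\int_{t-\varepsilon\gamma_J}^{t+\varepsilon\gamma_J}\!\!W(u)\,ds+\varepsilon\!\iint J_\varepsilon^\xi\,(u'-u')^2\Bigr),
\]
is \emph{false}. Take $W(s)=(|s|-1)^2$ and $u(s)=1+\lambda s$ near $t=0$: the nonlocal term vanishes since $u'$ is constant, the left side equals $\varepsilon\lambda^2(\beta(\xi)-\alpha(\xi))^2\ge\varepsilon\lambda^2\delta_{\!J}^2$, while the right side is $\frac{C}{\varepsilon}\int_{-\varepsilon\gamma_J}^{\varepsilon\gamma_J}(\lambda s)^2\,ds=O(\varepsilon^2\lambda^2)$. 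The ratio blows up as $\varepsilon\to0$. The problem is that the \emph{integral} of $W$ over a window of length $O(\varepsilon)$ cannot see point values of $W$ at the two endpoints; you need those point values directly.

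The paper avoids this by splitting $A$ rather than proving a uniform pointwise bound. Define $V:=\{t\in A:\,u(t-\varepsilon\alpha(\xi)),u(t-\varepsilon\beta(\xi))\notin[-\tfrac32,-\tfrac12]\}$ and $U$ analogously with $[\tfrac12,\tfrac32]$. On $U\cup V$ both shifted values satisfy $|s\mp1|\ge\tfrac12$, so \eqref{F31}/\eqref{F41} give the \emph{pointwise-in-$t$} bound $(u(t-\varepsilon\alpha)-u(t-\varepsilon\beta))^2\le 2\hat c_W\bigl(W(u(t-\varepsilon\alpha))+W(u(t-\varepsilon\beta))\bigr)$; integrate in $t$ and shift variables to get the $\mathcal W_\varepsilon$ term. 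On $A\setminus(U\cup V)$ one shifted value lies in $[\tfrac12,\tfrac32]$ and the other in $[-\tfrac32,-\tfrac12]$, so the squared difference is $\le9$, \emph{and} there is a genuine crossing of $\pm\tfrac12$ inside $[t-\varepsilon\gamma_J,t+\varepsilon\gamma_J]$. Now Lemma~\ref{lemma compactness} applies as a black box: the localized energy over $[t-2\varepsilon\gamma_J,t+2\varepsilon\gamma_J]$ is $\ge c_{J,W}$, so $9\le\frac{9}{c_{J,W}}\times(\text{localized energy})$. Integrate this over $t\in A\setminus(U\cup V)$ and use the averaging inequality $\frac{1}{2\eta}\int_A\int_{t-\eta}^{t+\eta}f(r)\,dr\,dt\le\int_{(A)^\eta}f$ to collapse the double integral. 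The Case~1/Case~2 dichotomy inside Lemma~\ref{lemma compactness} is not reused here---that lemma is invoked only as a lower bound on the energy in the transition region.
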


\begin{proof}
Fix $\xi$, $\varepsilon$, $A$, and $u$ as in the statement of the lemma, and
define
\begin{align}
&  U:=\{t\in A:u(t-\varepsilon\alpha(\xi)),u(t-\varepsilon\beta(\xi
))\notin\lbrack\tfrac{1}{2},\tfrac{3}{2}]\}~,\nonumber\\
&  V:=\{t\in A:u(t-\varepsilon\alpha(\xi)),u(t-\varepsilon\beta(\xi
))\notin\lbrack-\tfrac{3}{2},-\tfrac{1}{2}]\}~. \label{F34}%
\end{align}
If $t\in V$, then by (\ref{F31}),
\begin{align}
(u(t-\varepsilon\alpha(\xi))-u(t-\varepsilon\beta(\xi)))^{2}  &  {}%
\leq2(u(t-\varepsilon\alpha(\xi))-1)^{2}+2(u(t-\varepsilon\beta(\xi
))-1)^{2}\nonumber\\
&  \leq2\hat{c}_{W}\big(W(u(t-\varepsilon\alpha(\xi)))+W(u(t-\varepsilon
\beta(\xi)))\big)~.\nonumber
\end{align}
Using (\ref{F41}) we prove the same inequality for $t\in U$. Integrating and
using Remark \ref{remark 1}, we obtain
\begin{equation}
\varepsilon\int_{U\cup V}(u^{\prime}(t))^{2}dt\leq\big(8\frac{\hat{c}_{W}}{\delta_{\!J}^{2}}
+2c_{\!J}\big)\mathcal{F}_{\varepsilon}^{\xi}%
(u,({A)^{\varepsilon\gamma_{J}}})~. \label{F36}%
\end{equation}

If $t\in A\setminus(U\cup V)$, then either
\[
u(t-\varepsilon\alpha(\xi))\in\lbrack-\tfrac{3}{2},-\tfrac{1}{2}%
]\quad\text{and}\quad u(t-\varepsilon\beta(\xi))\in\lbrack\tfrac{1}{2}%
,\tfrac{3}{2}]
\]
or
\[
u(t-\varepsilon\beta(\xi))\in\lbrack-\tfrac{3}{2},-\tfrac{1}{2}]\quad
\text{and}\quad u(t-\varepsilon\alpha(\xi))\in\lbrack\tfrac{1}{2},\tfrac{3}%
{2}]~.
\]
Then
\begin{equation}
(u(t-\varepsilon\alpha(\xi))-u(t-\varepsilon\beta(\xi)))^{2}\leq9~.
\label{F39}%
\end{equation}
Moreover there exist $\sigma$ and $\tau$, satisfying
\begin{equation}
t-\varepsilon\gamma_{J}\leq t-\varepsilon\beta(\xi)\leq\sigma<\tau\leq
t-\varepsilon\alpha(\xi)\leq t+\varepsilon\gamma_{J} \label{F37}%
\end{equation}
and such that
\begin{equation}
u(t)\in\left(  -\tfrac{1}{2},\tfrac{1}{2}\right)  \text{ for every }%
t\in(\sigma,\tau)\nonumber
\end{equation}
and either%
\[
u(\sigma)=\tfrac{1}{2}\quad\text{and}\quad u(\tau)=-\tfrac{1}{2}%
\]
or
\[
u(\sigma)=-\tfrac{1}{2}\quad\text{and}\quad u(\tau)=\tfrac{1}{2}~.
\]
By Lemma \ref{lemma compactness} and by (\ref{F37}), there exists $c_{J,W}>0$
such that
\begin{equation}
c_{J,W}\leq\varepsilon\int_{t-\varepsilon\gamma_{J}}^{t+\varepsilon\gamma_{J}%
}\int_{t-2\varepsilon\gamma_{J}}^{t+2\varepsilon\gamma_{J}}%
\!\!\!\!\!\!\!\!\!J_{\varepsilon}^{\xi}(r-s)(u_{\varepsilon}^{\prime
}(r)-u_{\varepsilon}^{\prime}(s))^{2}dsdr+\frac{1}{\varepsilon}\int%
_{t-2\varepsilon\gamma_{J}}^{t+2\varepsilon\gamma_{J}}%
\!\!\!\!\!\!\!\!W(u_{\varepsilon}(r))~dr~.\nonumber
\end{equation}
Therefore by (\ref{F39}) we have
\begin{align}
\frac{1}{\varepsilon}\int_{A\setminus(U\cup V)}%
\!\!\!\!\!\!\!\!\!\!\!\!\!\!\!\!  &  (u(t-\varepsilon\alpha(\xi
))-u(t-\varepsilon\beta(\xi)))^{2}dt\nonumber\\
&  \leq\frac{9}{c_{J,W}}\int_{A}\int_{t-\varepsilon\gamma_{J}}^{t+\varepsilon
\gamma_{J}}\int_{t-2\varepsilon\gamma_{J}}^{t+2\varepsilon\gamma_{J}%
}\!\!\!\!\!\!\!\!\!J_{\varepsilon}^{\xi}(r-s)(u_{\varepsilon}^{\prime
}(r)-u_{\varepsilon}^{\prime}(s))^{2}dsdrdt\label{F43}\\
&  \quad+\frac{9}{c_{J,W}}\frac{1}{\varepsilon^{2}}\int_{A}\int%
_{t-2\varepsilon\gamma_{J}}^{t+2\varepsilon\gamma_{J}}%
\!\!\!\!\!\!\!\!W(u_{\varepsilon}(r))~drdt~.\nonumber
\end{align}
Since
\[
\frac{1}{2\eta}\int_{A}\int_{t-\eta}^{t+\eta}f(r)~drdt\leq\int_{(A)_{\eta}%
}f(t)~dt
\]
for every $\eta>0$ and for every integrable function $f\colon A\rightarrow
\lbrack0,+\infty]$, from (\ref{F43}) we obtain
\begin{equation}
\frac{1}{\varepsilon}\int_{A\setminus(U\cup V)}%
\!\!\!\!\!\!\!\!\!\!\!\!\!\!\!\!(u(t-\varepsilon\alpha(\xi))-u(t-\varepsilon
\beta(\xi)))^{2}dt\leq\tilde{c}_{J,W}\mathcal{F}_{\varepsilon
}^{\xi}(u,{(A)^{2\varepsilon\gamma_{J}}})~.\label{F44}%
\end{equation}
for a suitable constant $\tilde{c}_{J,W}$ depending only on $J$ and $W$. The conclusion follows from (\ref{F36}) and (\ref{F44}) using Remark
\ref{remark 1}.
\end{proof}

\medskip
\begin{proof}
[Proof of Theorem \ref{theorem compactness}]By (\ref{c1}) we have that%
\begin{equation}
\int_{A}W(u_{\varepsilon}(t))~dt\leq M\varepsilon~. \label{c7a}%
\end{equation}
By (\ref{W1}) and (\ref{W2}) this implies that $\{u_{\varepsilon}^{2}\}$
converges to $1$ in $L^{1}(A)$ and, up to a subsequence (not relabeled)
pointwise a.e. in $A$. 
%If either $u_{\varepsilon}$ converges pointwise to $1$
%a.e. in $A$ or it converges pointwise to $-1$ a.e. in $A$, then there is
%nothing to prove. Assume that $u_{\varepsilon}$ converges pointwise to $1$ and
%to $-1$ on sets of positive measure.

Let $\gamma_{J}>0$ be the constant given in (\ref{J1a}). Consider the
collection $\mathcal{I}_{\varepsilon}$ of all intervals $(\sigma
-\varepsilon\gamma_{J},y_{\varepsilon}+\varepsilon\gamma_{J})$ such that
$(\sigma,\tau)$ is contained in $(A)^{\varepsilon\gamma_{J}}$, and
$u_{\varepsilon}$ satisfies (\ref{c2}) and either (\ref{c2ab}) or
(\ref{c2ab-bis}) in $(\sigma,\tau)$. Note that by the intermediate value
theorem for all $\varepsilon>0$ sufficiently small there exist such intervals.
Moreover, by construction, all intervals in $\mathcal{I}_{\varepsilon}$ are
contained in $A$. It follows from (\ref{W2}) and (\ref{c7a}) that%
\[
M\varepsilon\geq\int_{\sigma}^{\tau}W(u_{\varepsilon}(t))~dt\geq\frac
{\tau-\sigma}{4c_{W}}~,
\]
hence
\begin{equation}
\tau-\sigma\leq4c_{W}M\varepsilon\label{c7b}~.\end{equation}
In particular, for every $I\in\mathcal{I}_{\varepsilon}$ we have%
\begin{equation}
\operatorname*{diam}I\leq(4c_{W}M+2\gamma_{J})\varepsilon~. \label{c7c}%
\end{equation}
Moreover, by (\ref{c1}) and (\ref{c2aa}), if $I_{1}$, \ldots, $I_{k}$ are
pairwise disjoint intervals in $\mathcal{I}_{\varepsilon}$, then
\begin{equation}
k\leq\frac{M}{c_{J,W}}~. \label{c8}%
\end{equation}

Let $B_{\varepsilon}$ be the union of all intervals in $\mathcal{I}%
_{\varepsilon}$ and let $\mathcal{C}_{\varepsilon}$ be the collection of its
connected components. Observe that distinct elements of $\mathcal{C}%
_{\varepsilon}$ must contain disjoint intervals of $\mathcal{I}_{\varepsilon}%
$, and so by (\ref{c8}) the number of elements of $\mathcal{C}_{\varepsilon}$
is uniformly bounded. To be precise,%
\begin{equation}
\#\mathcal{C}_{\varepsilon}\leq\frac{M}{c_{J,W}}~. \label{c8a}%
\end{equation}
Next we claim that if $C\in\mathcal{C}_{\varepsilon}$, then%
\begin{equation}
\operatorname*{diam}C\leq2(4C_{W}M+2\gamma_{J})\left(  \frac{M}{c_{J,W}%
}+1\right)  \varepsilon~. \label{c9}%
\end{equation}
Assume by contradiction that (\ref{c9}) fails. Let $k$ be the integer such
that $\frac{M}{c_{J,W}}<k\leq\frac{M}{c_{J,W}}+1$ and partition $C$ into $k$
subintervals $C_{1}$, \ldots, $C_{k}$ of equal length larger that
$2(4C_{W}M+2\gamma_{J})\varepsilon$. The middle point of each $C_{i}$ belongs
to some interval $I_{i}\in\mathcal{I}_{\varepsilon}$. By (\ref{c7c}), we have
that $I_{i}\subset C_{i}$ and so $I_{1}$, \ldots, $I_{k}$ are pairwise
disjoint. In turn $k$ satisfies (\ref{c8}), which contradicts its definition.
This concludes the proof of (\ref{c9}).

In view of (\ref{c8a}) there exist a sequence $\varepsilon_{j}\rightarrow
0^{+}$ and a nonnegative integer $k\leq\frac{M}{c_{J,W}}$ such that
$\#\mathcal{C}_{\varepsilon_{j}}=k$ for all $j\in\mathbb{N}$. Write
$\mathcal{C}_{\varepsilon_{j}}=\{C_{j}^{1},\ldots,C_{j}^{k}\}$ and choose
$t_{j}^{i}\in C_{j}^{i}$. Up to a subsequence (not relabeled) we may assume
that $t_{j}^{i}\rightarrow t^{i}\in\overline{A}$ for all $i=1$, \ldots, $k$.
By (\ref{c9}) for every $\eta>0$ we have that $C_{j}^{i}\subset\lbrack
t^{i}-\eta,t^{i}+\eta]$ for all $j$ sufficiently large. Let $S:=\{t^{1}%
,\ldots,t^{k}\}$ and let $K$ be a closed interval contained in $A\setminus S$.
Then $B_{\varepsilon_{j}}\cap K=\emptyset$ for all $j$ sufficiently large. We
claim that for all such $j$ either $\inf_{K}u_{\varepsilon_{j}}\geq-\frac
{1}{2}$ or $\sup_{K}u_{\varepsilon_{j}}\leq\frac{1}{2}$. Indeed, if this does
not hold then we can find $\sigma_{j}$ and $\tau_{j}$ in $K$ for which
$u_{\varepsilon_{j}}$ satisfies (\ref{c2}) and either (\ref{c2ab}) or
(\ref{c2ab-bis}). On the one hand $(\sigma_{j},\tau_{j})\subset B_{\varepsilon
_{j}}$ by the definition of $B_{\varepsilon_{j}}$. On the other hand
$(\sigma_{j},\tau_{j})\subset K$ since $K$ an interval. Therefore $(\sigma_{j}%
,\tau_{j})\subset B_{\varepsilon_{j}}\cap K$ and this contradicts the fact
that $B_{\varepsilon_{j}}\cap K=\emptyset$.

We extract a subsequence, possibly depending on $K$, not relabelled, such
that, either $\inf_{K}u_{\varepsilon_{j}}\geq-\frac{1}{2}$ for all $j$ or
$\sup_{K}u_{\varepsilon_{j}}\leq\frac{1}{2}$ for all $j$. Since
$u_{\varepsilon_{j}}^{2}(t)\rightarrow1$ for a.e. $t\in K$, we conclude that
$u_{\varepsilon_{j}}(t)\rightarrow1$ for a.e. $t\in K$ in the former case
while $u_{\varepsilon_{j}}(t)\rightarrow-1$ for a.e. $t\in K$ in the latter.
By iterating this argument with an increasing sequence of compact intervals
$K$ whose union is a connected component of $A\setminus S$, it follows by a
diagonal argument that a subsequence $\{u_{\varepsilon_{j}}\}$ (not relabeled)
converges pointwise a.e in $A\setminus S$ to a function $u$ constantly equal
to $-1$ or $1$ in each connected component of $A\setminus S$. This implies
that $u\in BV(A;\{-1,1\})$ with $S_{u}\subset S$, hence $\#S_{u}\leq\#S\leq
k\leq\frac{M}{c_{J,W}}$. The $L^{2}$ convergence of $\{u_{\varepsilon_{j}}\}$
to $u$ now follows from (\ref{W2}) and (\ref{c7a}).
\end{proof}

\section{Compactness and interpolation for $n\geq2$}

\label{section compactness n 2}

Given $a\in\mathbb{R}$ we define
\begin{equation}
a^{(1)}:=(-1)\vee(a\wedge1)~. \label{C0}%
\end{equation}

\begin{lemma}
\label{lemma truncated}Let $\{u_{\varepsilon}\}\subset L^{2}\left(
\Omega\right)  $ be such that%
\begin{equation}
M:=\sup_{\varepsilon}\mathcal{W}_{\varepsilon}(u_{\varepsilon})<+\infty~.
\label{C1}%
\end{equation}
Then $u_{\varepsilon}-u_{\varepsilon}^{(1)}\rightarrow0$ strongly in
$L^{2}(\Omega)$.
\end{lemma}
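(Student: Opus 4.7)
The plan is to reduce the statement to a direct application of the coercivity hypothesis (\ref{W2}) on $W$, since the truncation $a^{(1)}$ was designed precisely so that the pointwise difference $|u_\varepsilon - u_\varepsilon^{(1)}|$ is controlled by $(|u_\varepsilon|-1)$ on the set where the truncation is active.

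First I would unpack the definition (\ref{C0}). For any $a \in \mathbb{R}$,
\[
a - a^{(1)} = \begin{cases} 0 & \text{if } |a| \leq 1,\\ a-1 & \text{if } a > 1,\\ a+1 & \text{if } a<-1,\end{cases}
\]
so in every case $|a - a^{(1)}| = (|a|-1)_+ \leq \bigl||a|-1\bigr|$. Squaring and applying the pointwise estimate (\ref{W2}) yields
\[
|a - a^{(1)}|^2 \leq (|a|-1)^2 \leq c_W W(a).
\]

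Next I would apply this pointwise bound with $a = u_\varepsilon(x)$ and integrate over $\Omega$, obtaining
\[
\int_\Omega |u_\varepsilon(x) - u_\varepsilon^{(1)}(x)|^2 \, dx \leq c_W \int_\Omega W(u_\varepsilon(x))\, dx = c_W\, \varepsilon\, \mathcal{W}_\varepsilon(u_\varepsilon) \leq c_W M \varepsilon,
\]
where the last inequality uses (\ref{C1}). Letting $\varepsilon \to 0^+$ gives the strong $L^2$ convergence of $u_\varepsilon - u_\varepsilon^{(1)}$ to zero.

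There is no real obstacle here: the result is essentially a one-line consequence of (\ref{W2}) combined with the explicit form of the truncation $a^{(1)}$. The only point worth checking carefully is the pointwise inequality $|a - a^{(1)}| \leq \bigl||a|-1\bigr|$, which follows by splitting into the three cases above. The lemma is included precisely because this truncation will be useful later to replace an arbitrary energy-bounded sequence by one taking values in $[-1,1]$ without losing control of the $L^2$ convergence.
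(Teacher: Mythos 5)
Your proof is correct, and it is actually more direct than the paper's. The paper argues via pointwise a.e. convergence of $|u_\varepsilon|$ to $1$ (extracting a subsequence) combined with a generalized dominated-convergence argument, using the domination $(u_\varepsilon - u_\varepsilon^{(1)})^2 \le u_\varepsilon^2 \le \tfrac{2}{c_W}W(u_\varepsilon)+2$. You instead observe the sharper pointwise identity $|a - a^{(1)}| = (|a|-1)_+ \le \bigl||a|-1\bigr|$, which together with (\ref{W2}) gives $|a-a^{(1)}|^2 \le c_W W(a)$ pointwise; integration then yields the explicit bound $\|u_\varepsilon - u_\varepsilon^{(1)}\|_{L^2(\Omega)}^2 \le c_W M \varepsilon$. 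This bypasses both the subsequence extraction and dominated convergence, and it gives a quantitative rate. Both arguments are correct; yours is shorter, works for the full sequence directly, and makes the role of the truncation transparent.
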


\begin{proof}
By\ (\ref{W functional}) and (\ref{C1}) we have that
\begin{equation}
\int_{\Omega}W(u_{\varepsilon}(x))~dx\rightarrow0 \label{C2}%
\end{equation}
as $\varepsilon\rightarrow0^{+}$. By (\ref{W1}) and (\ref{W2}) this implies
that, up to a subsequence, $|u_{\varepsilon}(x)|\rightarrow1$ for a.e. $x\in\Omega$. Hence,
$u_{\varepsilon}(x)-u_{\varepsilon}^{(1)}(x)\rightarrow0$ for a.e. $x\in
\Omega$. On the other hand, by (\ref{W2}),%
\[
(u_{\varepsilon}(x)-u_{\varepsilon}^{(1)}(x))^{2}\leq(u_{\varepsilon}%
(x))^{2}\leq\frac{2}{c_{W}}W(u_{\varepsilon}(x))+2~,
\]
so that the conclusion follows from (\ref{C1}) and the (generalized) Lebesgue
dominated convergence theorem.
\end{proof}

In what follows, given a Borel set $E\subset\mathbb{R}^{n}$ and a function
$u:E\rightarrow\mathbb{R}$, for every $\xi\in\mathbb{S}^{n-1}$ and for every
$y\in\Pi^{\xi}$ (see (\ref{PI xi})) we define the one-dimensional function
\begin{equation}
u_{y}^{\xi}(t):=u(y+t\xi)~,\quad t\in E_{y}^{\xi}~, \label{u slice}%
\end{equation}
where $E_{y}^{\xi}$ is defined in (\ref{E slice}).

\begin{lemma}
\label{lemma slice energy}For every $A\subset\mathbb{R}^{n}$ open,
$\varepsilon>0$, and $u\in W_{\operatorname*{loc}}^{1,2}(A)\cap L^{2}(A)$, we
have%
\[
\mathcal{F}_{\varepsilon}(u,A)\geq\int_{\mathbb{S}^{n-1}}\int_{\Pi^{\xi}%
}\mathcal{F}_{\varepsilon}^{\xi}(u_{z}^{\xi},A_{z}^{\xi})~d\mathcal{H}%
^{n-1}(z)d\mathcal{H}^{n-1}(\xi)~.
\]

\end{lemma}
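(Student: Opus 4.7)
The plan is to slice both terms of $\mathcal{F}_\varepsilon(u,A) = \mathcal{W}_\varepsilon(u,A) + \mathcal{J}_\varepsilon(u,A,A)$ separately, matching the structure of $\mathcal{F}_\varepsilon^\xi$. For the potential term, I will use Fubini's theorem to write, for each fixed $\xi \in \mathbb{S}^{n-1}$,
\[
\int_A W(u(x))\,dx = \int_{\Pi^\xi} \int_{A_z^\xi} W(u_z^\xi(t))\,dt\, d\mathcal{H}^{n-1}(z),
\]
and then average over $\xi \in \mathbb{S}^{n-1}$, which contributes a factor of $1/\sigma_{n-1}$. Dividing by $\varepsilon$ recovers exactly the $W$-part of $\mathcal{F}_\varepsilon^\xi(u_z^\xi, A_z^\xi)$ integrated in $(z,\xi)$.

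For the nonlocal gradient term, I will apply the Blaschke--Petkantschin formula (Proposition \ref{proposition blaschke}) with $E = A$ and $g(x,y) = J_\varepsilon(x-y)|\nabla u(x) - \nabla u(y)|^2$. Observing that $J_\varepsilon((t-s)\xi)|t-s|^{n-1} = J_\varepsilon^\xi(t-s)$ by (\ref{p3}), this gives
\begin{align*}
\mathcal{J}_\varepsilon(u,A,A) = {}& \frac{\varepsilon}{2}\int_{\mathbb{S}^{n-1}}\int_{\Pi^\xi}\int_{A_z^\xi}\int_{A_z^\xi} J_\varepsilon^\xi(t-s)|\nabla u(z+s\xi)-\nabla u(z+t\xi)|^2 \\
& \qquad ds\,dt\,d\mathcal{H}^{n-1}(z)\,d\mathcal{H}^{n-1}(\xi).
\end{align*}
Since $u \in W^{1,2}_{\operatorname{loc}}(A)$, by the standard slicing characterization of Sobolev functions, for $\mathcal{H}^{n-1}$-a.e.\ $\xi$ and $\mathcal{H}^{n-1}$-a.e.\ $z \in \Pi^\xi$ the slice $u_z^\xi$ lies in $W^{1,2}_{\operatorname{loc}}(A_z^\xi)$ with $(u_z^\xi)'(t) = \nabla u(z+t\xi)\cdot \xi$.

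The key inequality is then pointwise: since $|\xi|=1$, Cauchy--Schwarz gives
\[
\bigl((u_z^\xi)'(s) - (u_z^\xi)'(t)\bigr)^2 = \bigl((\nabla u(z+s\xi)-\nabla u(z+t\xi))\cdot\xi\bigr)^2 \leq |\nabla u(z+s\xi)-\nabla u(z+t\xi)|^2,
\]
which is exactly where the inequality (as opposed to equality) in the statement originates. Substituting this into the Blaschke--Petkantschin identity bounds $\mathcal{J}_\varepsilon(u,A,A)$ from below by the integrated $J$-part of $\mathcal{F}_\varepsilon^\xi(u_z^\xi, A_z^\xi)$. Adding the $W$-part and $J$-part estimates yields the claim. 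There is no serious obstacle here; the only subtlety is to justify the a.e.\ slicing formula for the gradient of $u$ so that the integrand is well defined on almost every line, which is classical.
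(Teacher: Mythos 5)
Your argument is correct and is essentially the same as the paper's proof: both use Fubini's theorem together with an average over $\mathbb{S}^{n-1}$ for the potential term, the Blaschke--Petkantschin formula (Proposition \ref{proposition blaschke}) with the identity (\ref{p3}) for the nonlocal term, and the pointwise bound $((u_z^\xi)'(t)-(u_z^\xi)'(s))^2\le|\nabla u(z+t\xi)-\nabla u(z+s\xi)|^2$ (which you derive via Cauchy--Schwarz with $|\xi|=1$) as the source of the inequality. Nothing to add.
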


\begin{proof}
By Fubini's theorem, Proposition \ref{proposition blaschke}, (\ref{214bis}), (\ref{p3}), and (\ref{p5}), we obtain
\begin{align*}
&  \mathcal{F}_{\varepsilon}(u,A)\\
&  =\frac{1}{\sigma_{n-1}\varepsilon}\int_{\mathbb{S}^{n-1}}\int%
_{\Pi^{\xi}}\int_{A_{z}^{\xi}}W(u(z+t\xi))~dtd\mathcal{H}^{n-1}(z)d\mathcal{H}%
^{n-1}(\xi)\\
&  \quad+\frac{\varepsilon}{2}\!\int_{\mathbb{S}^{n-1}}\!\int_{\Pi^{\xi}}\!\int%
_{A_{z}^{\xi}}\!\int_{A_{z}^{\xi}}\!\!J_{\varepsilon}^{\xi}(t\!-\!s)|\nabla
u(z\!+\!t\xi)\!-\!\nabla u(z\!+\!s\xi)|^{2}dtdsd\mathcal{H}^{n-1}\!(z)d\mathcal{H}^{n-1}%
\!(\xi)\\
&  \geq\frac{1}{\sigma_{n-1}\varepsilon}\int_{\mathbb{S}^{n-1}}\int%
_{\Pi^{\xi}}\int_{A_{z}^{\xi}}W(u_{z}^{\xi}(t))~dtd\mathcal{H}^{n-1}%
(z)d\mathcal{H}^{n-1}(\xi)\\
&  \quad+\frac{\varepsilon}{2}\int_{\mathbb{S}^{n-1}}\int_{\Pi^{\xi}}\int%
_{A_{z}^{\xi}}\int_{A_{z}^{\xi}}J_{\varepsilon}^{\xi}(t-s)((u_{z}^{\xi
})^{\prime}(t)-(u_{z}^{\xi})^{\prime}(s))^{2}dtdsd\mathcal{H}^{n-1}%
(z)d\mathcal{H}^{n-1}(\xi)\\
&  =\int_{\mathbb{S}^{n-1}}\int_{\Pi^{\xi}}\mathcal{F}_{\varepsilon}^{\xi
}(u_{z}^{\xi},A_{z}^{\xi})~d\mathcal{H}^{n-1}(z)d\mathcal{H}^{n-1}(\xi)~.
\end{align*}

\end{proof}

\bigskip

\begin{proof}
[Proof of Theorem \ref{theorem compactness n>1}]Let $\varepsilon
_{j}\rightarrow0^{+}$ and, for simplicity, write $u_{j}:=u_{\varepsilon_{j}}$.
By Lemma \ref{lemma slice energy},%
\begin{equation}
\int_{\mathbb{S}^{n-1}}\int_{\Pi^{\xi}}\mathcal{F}_{\varepsilon_{j}}^{\xi
}((u_{j})_{z}^{\xi},\Omega_{z}^{\xi})~d\mathcal{H}^{n-1}(z)d\mathcal{H}%
^{n-1}(\xi)\leq M~. \label{D1}%
\end{equation}
We claim that there exist a collection $\xi_{1}$, \ldots, $\xi_{n}%
\in\mathbb{S}^{n-1}$ of linearly independent vectors and a subsequence (not
relabeled) such that
\begin{equation}
\lim_{j\rightarrow+\infty}\int_{\Pi^{\xi_{i}}}\mathcal{F}_{\varepsilon_{j}%
}^{\xi_{i}}((u_{j})_{z}^{\xi_{i}},\Omega_{z}^{\xi_{i}})~d\mathcal{H}%
^{n-1}(z)=:M_{i}<+\infty~,\label{C3}%
\end{equation}
for every $i=1$, \ldots, $n$.

Indeed, using Fatou's lemma by (\ref{D1}) we have that%
\begin{equation}
\int_{\mathbb{S}^{n-1}}\liminf_{j\rightarrow+\infty}\int_{\Pi^{\xi}%
}\mathcal{F}_{\varepsilon_{j}}^{\xi}((u_{j})_{z}^{\xi},\Omega_{z}^{\xi
})~d\mathcal{H}^{n-1}(z)d\mathcal{H}^{n-1}(\xi)\leq M~. \label{D2}%
\end{equation}
Hence, there exists $\xi_{1}\in\mathbb{S}^{n-1}$ such that%
\begin{equation}
\liminf_{j\rightarrow+\infty}\int_{\Pi^{\xi_{1}}}\mathcal{F}_{\varepsilon_{j}%
}^{\xi_{1}}((u_{j})_{z}^{\xi_{1}},\Omega_{z}^{\xi_{1}})~d\mathcal{H}%
^{n-1}(z)=:M_{1}<+\infty~, \label{C2A}%
\end{equation}
and we can extract a subsequence (not relabeled) such that (\ref{C3}) holds
for $i=1$.

We proceed by induction. Assume that we found a collection $\xi_{1}$, \ldots,
$\xi_{k}\in\mathbb{S}^{n-1}$, $1\leq k<n$, of linearly independent vectors and
a subsequence (not relabeled) such that (\ref{C3}) holds for every $i=1$,
\ldots, $k$. Note that this subsequence still satisfies (\ref{D1}), and hence
(\ref{D2}). Therefore we can find $\xi_{k+1}\in\mathbb{S}^{n-1}$, linearly
independent of $\xi_{1}$, \ldots, $\xi_{k}$, such that
\[
\liminf_{j\rightarrow+\infty}\int_{\Pi^{\xi_{k+1}}}\mathcal{F}_{\varepsilon
_{j}}^{\xi_{k+1}}((u_{j})_{z}^{\xi_{k+1}},\Omega_{z}^{\xi_{k+1}}%
)~d\mathcal{H}^{n-1}(z)=:M_{k+1}<+\infty~,
\]
and we can extract a subsequence (not relabeled) such that (\ref{C3}) holds
also for $i=k+1$. After $n$ steps we obtain that (\ref{C3}) is satisfied for
every $i=1$, \ldots, $n$.

Given $i=1$, \ldots, $n$ and $\delta>0$, for every $j$ let
\begin{equation}
A_{j}^{i}:=\Big\{z\in\Pi^{\xi_{i}}: \mathcal{F}_{\varepsilon_{j}}^{\xi_{i}
}((u_{j})_{z}^{\xi_{i}},\Omega_{z}^{\xi_{i}})>\frac{M_{i}}{\delta}\Big\}~,
\label{D3}%
\end{equation}
and let $v_{j}^{i}\in L^{2}(\Omega)$ be defined by
\begin{equation}%
\begin{cases}
(v_{j}^{i})_{z}^{\xi_{i}}:=(u_{j}^{(1)})_{z}^{\xi_{i}} & \hbox{if }z\in
\Pi^{\xi_{i}}\setminus A_{j}~,\\
(v_{j}^{i})_{z}^{\xi_{i}}:=0 & \hbox{if }z\in A_{j}~,
\end{cases}
\label{D4}%
\end{equation}
where $u_{j}^{(1)}$ is the truncated function defined using (\ref{C0}). By
(\ref{C3}) and (\ref{D3}) we have
\[\limsup_{j\rightarrow+\infty}\,\mathcal{H}^{n-1}(A_{j}^{i})\leq\delta~,
\]
hence (\ref{D4}) yields
\begin{equation}\limsup_{j\rightarrow+\infty}\,\|v_{j}^{i}-u_{j}^{(1)}\|_{L^{2}(\Omega)}^{2}\le\delta\operatorname*{diam}%
(\Omega)~. \label{D16}%
\end{equation}

By Theorem \ref{theorem compactness} for every $z\in \Pi^{\xi_{i}}$ the set
$\{(u_{j})_{z}^{\xi_{i}}(1-\chi_{A_{j}^{i}}(z)):j\in\mathbb{N}\}$ is relatively compact in
$L^{2}(\Omega_{z}^{\xi_{i}})$, where $\chi_{A_{j}^{i}}(z)=1$ for $z\in A_{j}^{i}$ and
 $\chi_{A_{j}^{i}}(z)=0$ for $z\not\in A_{j}^{i}$. Therefore the same property holds for the set
of truncated functions $\{(u_{j}^{(1)})_{z}^{\xi_{i}}(1-\chi_{A_{j}^{i}}(z)):j\in\mathbb{N}\}$. It
follows that for every $z\in\Pi^{\xi_{i}}$ the set $\{(v_{j}^{i})_{z}^{\xi
_{i}}:j\in\mathbb{N}\}$ is relatively compact in $L^{2}(\Omega_{z}^{\xi_{i}}%
)$. Since this property is valid for every $i=1$, \ldots, $n$, we can apply
the characterization by slicing of precompact sets of $L^{2}(\Omega)$ given by
\cite[Theorem 6.6]{alberti-bouchitte-seppecher1998} and we obtain that the set $\{u_{j}^{(1)}%
:j\in\mathbb{N}\}$ is relatively compact in $L^{2}(\Omega)$. In turn, by Lemma
\ref{lemma truncated} the set $\{u_{j}:j\in\mathbb{N}\}$ is relatively compact
in $L^{2}(\Omega)$, hence there exist a subsequence (not relabeled) , such
that $u_{j}$ converges in $L^{2}(\Omega)$ to some function $u$. By
(\ref{D7}),
\[
\lim_{j\rightarrow+\infty}\int_{\Omega}W(u_{j}(x))~dx=0~,
\]
which, together with (\ref{W1}) and (\ref{W2}), implies that $u(x)\in\{-1,1\}$
for a.e. $x\in\Omega$.

It remains to show that $u\in BV(\Omega)$. Using Fubini's theorem we find that
there exists a subsequence (not relabeled) such that
\begin{equation}
(u_{j})_{z}^{\xi_{i}}\rightarrow u_{z}^{\xi_{i}}\hbox{ in }L^{2}(\Omega
_{z}^{\xi_{i}})~. \label{D15}%
\end{equation}
Moreover, Fatou's lemma and (\ref{C3}) imply that%
\begin{equation}
\int_{\Pi^{\xi_{i}}}\liminf_{j\rightarrow+\infty}\mathcal{F}_{\varepsilon_{j}%
}^{\xi_{i}}((u_{j})_{z}^{\xi_{i}},\Omega_{z}^{\xi_{i}})~d\mathcal{H}%
^{n-1}(z)\leq M_{i}~, \label{D8}%
\end{equation}
hence
\begin{equation}
\liminf_{j\rightarrow+\infty}\mathcal{F}_{\varepsilon_{j}}^{\xi_{i}}%
((u_{j})_{z}^{\xi_{i}},\Omega_{z}^{\xi_{i}})<+\infty\label{D9}%
\end{equation}
for $\mathcal{H}^{n-1}$-a.e. $z\in\Pi^{\xi_{i}}$. Fix $z\in\Pi^{\xi_{i}}$
satisfying (\ref{D15}) and (\ref{D9}), and extract a subsequence $\{\hat
{u}_{j}\}$, depending on $z$, such that
\begin{equation}
\lim_{j\rightarrow+\infty}\mathcal{F}_{\varepsilon_{j}}^{\xi_{i}}((\hat{u}%
_{j})_{z}^{\xi_{i}},\Omega_{z}^{\xi_{i}})=\liminf_{j\rightarrow+\infty
}\mathcal{F}_{\varepsilon_{j}}^{\xi_{i}}((u_{j})_{z}^{\xi_{i}},\Omega_{z}%
^{\xi_{i}})~. \label{D19}%
\end{equation}
By (\ref{D10}),  (\ref{D15}), and (\ref{D19}) we have
\[
\#S_{u_{z}^{\xi_{i}}}\leq\frac{1}{c_{J,W}}\liminf_{j\rightarrow+\infty}\mathcal{F}_{\varepsilon_{j}}^{\xi_{i}}%
((u_{j})_{z}^{\xi_{i}},\Omega_{z}^{\xi_{i}})~.
\]
Since $u_{z}^{\xi_{i}}(t)\in\{-1,1\}$ for a.e. $t\in\Omega_{z}^{\xi_{i}}$, we
deduce that
\[
|Du_{z}^{\xi_{i}}|(\Omega_{z}^{\xi_{i}})\leq\frac{2}{c_{J,W}}%
\liminf_{j\rightarrow+\infty}\mathcal{F}_{\varepsilon_{j}}^{\xi_{i}}%
((u_{j})_{z}^{\xi_{i}},\Omega_{z}^{\xi_{i}})\]
for $\mathcal{H}^{n-1}$-a.e. $z\in\Pi^{\xi_{i}}$. This property holds for
every $i=1$, \ldots, $n$. Therefore, we can apply the characterization by
slicing of $BV$ functions given by \cite[Remark 3.104]%
{ambrosio-fusco-pallara2000} and we obtain from (\ref{D8}) that $u\in BV(\Omega)$.
\end{proof}

For $A\subset\mathbb{R}^{n}$ and $\eta>0$ we recall the notation (\ref{F49}).

\begin{lemma}
[Interpolation inequality]\label{F53} There exists a constant $c_{J,W}^{(n)}$
such that
\begin{equation}
\varepsilon\int_{A}|\nabla u(x)|^{2}dx\leq c_{J,W}^{(n)}\mathcal{F}%
_{\varepsilon}(u,{(A)}^{2\varepsilon\gamma_{J}})~. \label{F54}%
\end{equation}
for every $\varepsilon>0$, for every open set $A\subset\mathbb{R}^{n}$, and
for every $u\in W_{\operatorname*{loc}}^{1,2}((A)_{2\varepsilon\gamma_{J}})$,
where $\gamma_{J}$ is the constant in (\ref{J1a}).
\end{lemma}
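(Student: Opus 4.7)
The plan is to reduce the $n$-dimensional statement to the one-dimensional interpolation (Lemma \ref{F33}) by a slicing argument, combined with Proposition \ref{proposition blaschke} (or equivalently Lemma \ref{lemma slice energy}) and the standard spherical identity $\int_{\mathbb{S}^{n-1}}(a\cdot\xi)^{2}\,d\mathcal{H}^{n-1}(\xi)=\frac{\sigma_{n-1}}{n}|a|^{2}$ for every $a\in\mathbb{R}^{n}$.

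First, I would verify the set-theoretic inclusion
$$(A_{z}^{\xi})^{2\varepsilon\gamma_{J}}\subseteq\bigl((A)^{2\varepsilon\gamma_{J}}\bigr)_{z}^{\xi}\quad\text{for every }\xi\in\mathbb{S}^{n-1},\ z\in\Pi^{\xi},$$
which follows because if $|t-s|<2\varepsilon\gamma_{J}$ with $z+s\xi\in A$, then $|(z+t\xi)-(z+s\xi)|<2\varepsilon\gamma_{J}$, so $z+t\xi\in (A)^{2\varepsilon\gamma_{J}}$. Since $u\in W_{\operatorname{loc}}^{1,2}((A)^{2\varepsilon\gamma_{J}})$, Fubini's theorem ensures that for every $\xi$ and $\mathcal{H}^{n-1}$-a.e.\ $z\in\Pi^{\xi}$ the slice $v:=u_{z}^{\xi}$ lies in $W_{\operatorname{loc}}^{1,2}(((A)^{2\varepsilon\gamma_{J}})_{z}^{\xi})$, hence also in $W_{\operatorname{loc}}^{1,2}((A_{z}^{\xi})^{2\varepsilon\gamma_{J}})$, with $v'(t)=\nabla u(z+t\xi)\cdot\xi$. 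Applying Lemma \ref{F33} to $v$ on $A_{z}^{\xi}$ and using the above inclusion together with the monotonicity of $\mathcal{F}_{\varepsilon}^{\xi}(v,\cdot)$ in its domain, I obtain
$$\varepsilon\int_{A_{z}^{\xi}}(\nabla u(z+t\xi)\cdot\xi)^{2}\,dt\leq c_{J,W}^{(1)}\mathcal{F}_{\varepsilon}^{\xi}\bigl(u_{z}^{\xi},((A)^{2\varepsilon\gamma_{J}})_{z}^{\xi}\bigr).$$

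Next, I would integrate this estimate over $z\in\Pi^{\xi}$ and then over $\xi\in\mathbb{S}^{n-1}$. On the left, Fubini's theorem gives
$$\int_{\Pi^{\xi}}\int_{A_{z}^{\xi}}(\nabla u(z+t\xi)\cdot\xi)^{2}\,dt\,d\mathcal{H}^{n-1}(z)=\int_{A}(\nabla u(x)\cdot\xi)^{2}\,dx,$$
and averaging over the sphere via the spherical identity above transforms this into $\frac{\sigma_{n-1}}{n}\int_{A}|\nabla u(x)|^{2}\,dx$. On the right, applying Lemma \ref{lemma slice energy} to the enlarged set $(A)^{2\varepsilon\gamma_{J}}$ yields
$$\int_{\mathbb{S}^{n-1}}\int_{\Pi^{\xi}}\mathcal{F}_{\varepsilon}^{\xi}\bigl(u_{z}^{\xi},((A)^{2\varepsilon\gamma_{J}})_{z}^{\xi}\bigr)\,d\mathcal{H}^{n-1}(z)\,d\mathcal{H}^{n-1}(\xi)\leq\mathcal{F}_{\varepsilon}\bigl(u,(A)^{2\varepsilon\gamma_{J}}\bigr).$$

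Combining these two identities/estimates produces (\ref{F54}) with the explicit constant $c_{J,W}^{(n)}:=nc_{J,W}^{(1)}/\sigma_{n-1}$. The only nontrivial step in this approach is the bookkeeping of the one- versus $n$-dimensional enlargements (the inclusion stated above); once that is settled, the rest is a routine application of Fubini, the spherical averaging identity, the slicing energy bound Lemma \ref{lemma slice energy}, and the one-dimensional interpolation Lemma \ref{F33}.
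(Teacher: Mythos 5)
Your proof is correct and follows essentially the same route as the paper: slice via Proposition \ref{proposition blaschke} (Lemma \ref{lemma slice energy}), apply the one-dimensional Lemma \ref{F33} on each slice using the inclusion $(A_{z}^{\xi})^{2\varepsilon\gamma_{J}}\subset((A)^{2\varepsilon\gamma_{J}})_{z}^{\xi}$, then integrate in $z$ and $\xi$ and invoke the spherical identity $\int_{\mathbb{S}^{n-1}}(a\cdot\xi)^{2}\,d\mathcal{H}^{n-1}(\xi)=\omega_{n}|a|^{2}$. The only difference is that you explicitly justify the inclusion between the slice of the enlargement and the enlargement of the slice, which the paper states without comment.
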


\begin{proof}
Fix $\varepsilon$, $A$, and $u$ as in the statement of the lemma, and define
$B:=(A)^{2\varepsilon\gamma_{J}}$. Given $\xi\in\mathbb{S}^{n-1}$, for
$\mathcal{H}^{n-1}$ a.e. $z\in\Pi^{\xi}$ we have that $(A_{z}^{\xi
})^{2\varepsilon\gamma_{J}}\subset B_{z}^{\xi}$ and the sliced function
$u_{z}^{\xi}$ (see (\ref{u slice})) belongs to
$W_{\operatorname*{loc}}^{1,2}(B_{z}^{\xi})$. Hence by Lemma \ref{F33} we
have
\begin{equation}
\varepsilon\int_{A_{z}^{\xi}}((u_{z}^{\xi})^{\prime}(t))^{2}dt\leq
c_{J,W}^{(1)}\mathcal{F}_{\varepsilon}^{\xi}(u_{z}^{\xi},B_{z}^{\xi
})~.\nonumber
\end{equation}
Integrating this inequality in $z$ over $\Pi^{\xi}$ we obtain
\begin{equation}
\varepsilon\int_{A}(\nabla u(x)\cdot\xi)^{2}dx\leq c_{J,W}^{(1)}\int_{\Pi
^{\xi}}\mathcal{F}_{\varepsilon}^{\xi}(u_{z}^{\xi},B_{z}^{\xi})~d\mathcal{H}%
^{n-1}(z)~.\nonumber
\end{equation}
Integrating this inequality in $\xi$ over $\mathbb{S}^{n-1}$ and using Lemma
\ref{lemma slice energy}, together with the identity $\int_{\mathbb{S}^{n-1}%
}|a\cdot\xi|^{2}d\mathcal{H}^{n-1}(\xi)=\omega_{n}|a|^{2}$, we deduce
\begin{equation}
\omega_{n}\varepsilon\int_{A}|\nabla u(x)|^{2}dx\leq c_{J,W}^{(1)}%
\mathcal{F}_{\varepsilon}(u,B)~.\nonumber
\end{equation}
This concludes the proof.
\end{proof}

\section{The modification theorem}

\label{section modification}

In this section we prove that we can modify an admissible sequence to match a
mollification of its limit in a neighborhood of the boundary, without
increasing the limit energy.

Given $\nu\in\mathbb{S}^{n-1}$, let
\begin{equation}
w^{\nu}(x):=\left\{
\begin{array}
[c]{ll}%
\phantom{-}1 & \text{if }x\cdot\nu>0~,\\
-1 & \text{if }x\cdot\nu<0~.
\end{array}
\right.  \label{u nu}%
\end{equation}
When $\nu=e_{n}$, the superscript $\nu$ is omitted. Let $\theta\in
C_{c}^{\infty}\left(  \mathbb{R}^{n}\right)  $ be such that
$\operatorname*{supp}\theta\subset B_{1}\left(  0\right)  $, $\int%
_{\mathbb{R}^{n}}\theta\left(  x\right)  \,dx=1$, and for every $\sigma>0$
define the mollifier%
\begin{equation}
\theta_{\sigma}\left(  x\right)  :=\frac{1}{\sigma^{n}}\theta\left(  \frac
{x}{\sigma}\right)  ,\quad x\in\mathbb{R}^{n}~. \label{standard mollifier}%
\end{equation}
Note that $\operatorname*{supp}\theta_{\sigma}\subset B_{\sigma}\left(
0\right)  $. There exists a constant $C_{\theta}>1$, independent of $\sigma$,
such that%
\begin{align}
\vphantom{\frac{C_{\psi}}{\varepsilon^{2}}}  &  \sup_{\mathbb{R}^{n}%
}\left\vert \left(  w^{\nu}\!\ast\theta_{\sigma}\right)  -w^{\nu}\right\vert
\leq1~,\label{F75}\\
\vphantom{\frac{C_{\psi}}{\varepsilon^{2}}}  &  \left(  w^{\nu}\!\ast
\theta_{\sigma}\right)  (x)=1\quad\text{if }x\cdot\nu>\sigma,\quad\left(
w^{\nu}\!\ast\theta_{\sigma}\right)  (x)=-1\quad\text{if }x\cdot\nu
<-\sigma~,\label{admonbou}\\
\vphantom{\frac{C_{\psi}}{\varepsilon^{2}}}  &  \nabla\!\left(  w^{\nu}%
\!\ast\theta_{\sigma}\right)  (x)=0\quad\text{if }\left\vert x\cdot
\nu\right\vert >\sigma~,\label{mollifier support}\\
&  \sup_{\mathbb{R}^{n}}|\nabla\!\left(  w^{\nu}\!\ast\theta_{\sigma}\right)
|\leq\frac{C_{\theta}}{\sigma}\quad\text{and}\quad\sup_{\mathbb{R}^{n}}%
|\nabla^{2}\!\left(  w^{\nu}\!\ast\theta_{\sigma}\right)  |\leq\frac
{C_{\theta}}{\sigma^{2}}~. \label{derivatives convolution}%
\end{align}

Let $P$ be a bounded polyhedron of dimension $n-1$  containing $0$ 
 and let $\nu\in
\mathbb{S}^{n-1}$ be a normal to $P$. For every $\rho>0$ we set%
\begin{equation}
P_{\rho}:=\{x+t\nu:~x\in P~,~t\in(-\rho/2,\rho/2)\}~. \label{sigma rho}%
\end{equation}

\begin{theorem}
[Modification Theorem]\label{modification} Let $P$ be a bounded polyhedron of dimension $n-1$  containing $0$, 
let $\rho>0$, let $\varepsilon_{j}\rightarrow0^{+}$, and
let $\{u_{j}\}$ be a sequence in $W_{\operatorname*{loc}}^{1,2}(P_{\rho})\cap
L^{2}(P_{\rho})$ such that $u_{j}\rightarrow w^{\nu}$ in $L^{2}(P_{\rho})$.
Then there exists a constant $\delta_{P_{\rho}}>0$ depending only on $P_{\rho
}$ such that for every $0<\delta<\delta_{P_{\rho}}$ there exists a sequence
$\{v_{j}\}\subset W_{\operatorname*{loc}}^{1,2}(P_{\rho})\cap L^{2}(P_{\rho})$
such that $v_{j}\rightarrow w^{\nu}$ in $L^{2}(P_{\rho})$, $v_{j}=u_{j}$ in
$(P_{\rho})_{2\delta}$, $v_{j}=w^{\nu}\!\ast\theta_{\varepsilon_{j}}$ on
$P_{\rho}\setminus(P_{\rho})_{\delta}$, and%
\begin{equation}
\limsup_{j\rightarrow+\infty}\mathcal{F}_{\varepsilon_{j}}(v_{j},P_{\rho}%
)\leq\limsup_{j\rightarrow+\infty}\mathcal{F}_{\varepsilon_{j}}(u_{j},P_{\rho
})+\kappa_{1}\delta~, \label{G7}%
\end{equation}
where $\kappa_{1}>0$ is a constant independent of $j$, $\delta$, and $P_{\rho
}$.
\end{theorem}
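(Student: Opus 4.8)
The plan is to glue $u_j$ in the interior of $P_\rho$ to the smooth profile $g_j:=w^\nu\!\ast\theta_{\varepsilon_j}$ near $\partial P_\rho$ through a thin transition shell, keeping the energy almost unchanged. First I reduce: since (\ref{G7}) is trivial when $\limsup_j\mathcal F_{\varepsilon_j}(u_j,P_\rho)=+\infty$, I may assume this $\limsup$ is finite, pass to a (not relabelled) subsequence realizing it with $\sup_j\mathcal F_{\varepsilon_j}(u_j,P_\rho)=:M<+\infty$, and pass to a further subsequence along which $u_j\to w^\nu$ a.e.; then $\int_{P_\rho}W(u_j)\,dx\le M\varepsilon_j$, so for every $\eta>0$ the set $D_j^\eta:=\{x\in P_\rho:\operatorname*{dist}(u_j(x),\{-1,1\})\ge\eta\}$ satisfies $\chi_{D_j^\eta}\le W(u_j)/m_\eta$ with $m_\eta:=\inf\{W(s):\operatorname*{dist}(s,\{\pm1\})\ge\eta\}>0$. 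From (\ref{F75}), (\ref{admonbou}), (\ref{mollifier support}), (\ref{derivatives convolution}) and Lemma~\ref{lemma truncated} I record that $g_j\equiv w^\nu$ where $|x\cdot\nu|\ge\varepsilon_j$, $|g_j|\le1$, $\nabla g_j$ is supported in $\{|x\cdot\nu|<\varepsilon_j\}$ with $|\nabla g_j|\le C_\theta/\varepsilon_j$, $|\nabla^2 g_j|\le C_\theta/\varepsilon_j^2$, and $g_j,u_j^{(1)}\to w^\nu$ in $L^2(P_\rho)$; I take $\delta_{P_\rho}$ small enough (depending on $\rho$ and on $P$) that the geometry below is well defined.

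Next I estimate the energy of $g_j$ near the boundary: for $0<\delta<\delta_{P_\rho}$ the set $P_\rho\setminus(P_\rho)_\delta$ splits into a region contained in $\{|x\cdot\nu|>\rho/2-\delta\}$, on which $g_j\equiv\pm1$ is constant for $j$ large and hence contributes nothing to $\mathcal F_{\varepsilon_j}(g_j,\cdot)$, and a "lateral" region $U_\delta$ whose $\Pi^\nu$-projection is a tube of $\mathcal H^{n-1}$-measure $O(\delta)$ around $\partial P$. On $U_\delta$ one has $\mathcal W_{\varepsilon_j}(g_j,U_\delta)\le\varepsilon_j^{-1}(\sup_{[-1,1]}W)\,\mathcal L^n(U_\delta\cap\{|x\cdot\nu|<\varepsilon_j\})=O(\delta)$, and, using $|\nabla g_j(x)-\nabla g_j(y)|^2\le\min\{4\|\nabla g_j\|_\infty^2,\|\nabla^2 g_j\|_\infty^2|x-y|^2\}$, $\nabla g_j\equiv0$ off $\{|x\cdot\nu|<\varepsilon_j\}$, the scaling $z=(x-y)/\varepsilon_j$ and the elementary inequality $\min\{1,|z|^2\}\le|z|\wedge|z|^2$, one gets from (\ref{J1}) that $\mathcal J_{\varepsilon_j}(g_j,U_\delta,\mathbb R^n)\le C_\theta^2 M_J\,\varepsilon_j^{-1}\mathcal L^n(U_\delta\cap\{|x\cdot\nu|<\varepsilon_j\})=O(\delta)$. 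Hence $\mathcal F_{\varepsilon_j}(g_j,P_\rho\setminus(P_\rho)_\delta)\le\kappa_1\delta$ for $j$ large, with $\kappa_1$ depending only on $n$, $J$, $W$, $\theta$.

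For the construction, partition the annulus $(P_\rho)_\delta\setminus(P_\rho)_{2\delta}$ into $K=K(\delta,M)$ parallel sublayers of width $\delta/K$; by additivity of $\mathcal W_\varepsilon$ and superadditivity (\ref{J add}) of $\mathcal J_\varepsilon$ on disjoint sets, on all but fewer than $K/2$ of these sublayers $\mathcal F_{\varepsilon_j}(u_j,\cdot)\le 2M/K\le\delta$ once $K$ is large. Using finitely many such "good" sublayers and cutoff functions $\varphi$ with $|\nabla\varphi|\lesssim K/\delta$, $|\nabla^2\varphi|\lesssim(K/\delta)^2$, I define $v_j$ by: (a) passing from $u_j$ to its truncation $u_j^{(1)}$; (b) replacing $u_j^{(1)}$ by $w^\nu$ — hence by $\pm1$ exactly — on the part of the annulus outside an $\varepsilon_j$-neighbourhood of $\{x\cdot\nu=0\}$, thereby removing the "wrong‑phase islands" of $u_j$ near $\partial P_\rho$; and (c) interpolating the resulting function with $g_j$; outside the annulus $v_j:=u_j$ on the interior side and $v_j:=g_j$ on the exterior side. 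Then $v_j\in W^{1,2}_{\operatorname*{loc}}(P_\rho)\cap L^2(P_\rho)$, $v_j=u_j$ on $(P_\rho)_{2\delta}$, $v_j=w^\nu\!\ast\theta_{\varepsilon_j}$ on $P_\rho\setminus(P_\rho)_\delta$, and $v_j\to w^\nu$ in $L^2(P_\rho)$.

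Finally I estimate $\mathcal F_{\varepsilon_j}(v_j,P_\rho)$. For the potential term: on $(P_\rho)_{2\delta}$ there is no extra cost; on the cleaned part of the annulus $v_j=\pm1$ exactly, so $W(v_j)=0$; across the blending sublayers $v_j$ lies between $\pm1$ and $u_j$ (or between $u_j$ and $u_j^{(1)}$), so the monotonicity (\ref{W3})–(\ref{W3b}) gives $W(v_j)\le W(u_j)$ there, and on $D_j^\eta$ and on $\{|x\cdot\nu|<\varepsilon_j\}$ one uses $\chi_{D_j^\eta}\le W(u_j)/m_\eta$ together with $\mathcal L^n(\{|x\cdot\nu|<\varepsilon_j\}\cap((P_\rho)_\delta\setminus(P_\rho)_{2\delta}))=O(\varepsilon_j\delta)$; altogether $\mathcal W_{\varepsilon_j}(v_j,P_\rho)\le\mathcal W_{\varepsilon_j}(u_j,P_\rho)+O(\delta)$. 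For the nonlocal term, I decompose $\nabla v_j$ on the transition sublayers into a "main" part (essentially a convex combination of $\nabla u_j$, $\nabla u_j^{(1)}$, $\nabla g_j$ and $0$) and "error" parts carrying a cutoff gradient; with $|a+b|^2\le(1+\sigma)|a|^2+(1+\sigma^{-1})|b|^2$, the main part is controlled by $\mathcal J_{\varepsilon_j}(u_j,P_\rho)$ plus the $O(\delta)$ term of the previous paragraph, while each error term, split according to $\{|x-y|<\varepsilon_j\}$ versus $\{|x-y|\ge\varepsilon_j\}$ and estimated via the cutoff bounds, the integrability $\int_{\mathbb R^n}J(z)\min\{1,|z|^2\}\,dz\le M_J$ and $\int_{|z|\ge1}J(z)|z|\,dz\le M_J$ from (\ref{J1}), the interpolation inequality Lemma~\ref{F53} (which makes $\varepsilon_j\int_{P_\rho}|\nabla v_j|^2$ bounded), and $\|u_j-g_j\|_{L^2}\to0$, $\|u_j^{(1)}-u_j\|_{L^2}\to0$, tends to $0$ as $j\to+\infty$ for $\delta$ and $K$ fixed. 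Letting $\sigma=\sigma(\delta)\to0$ after $j\to+\infty$ yields (\ref{G7}). I expect step (b) to be the main obstacle: one must literally force $v_j$ onto the exact phases $\pm1$ near $\partial P_\rho$, because a mere pointwise smallness of $W(v_j)$ on the transition shell would be fatal once divided by $\varepsilon_j$, and one must do this while keeping in check the extra nonlocal energy it generates — which again rests on Lemma~\ref{F53} and the $|z|\wedge|z|^2$ decay of $J$.
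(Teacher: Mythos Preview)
Your outline has the right overall architecture (convex interpolation $v_j=\varphi u_j+(1-\varphi)g_j$, De~Giorgi layer selection, expansion of $|\nabla v_j(x)-\nabla v_j(y)|^2$, and control of the extra terms via Lemma~\ref{lemma g}, Lemma~\ref{lemma separated}, and the interpolation inequality Lemma~\ref{F53}), but there is a genuine gap in the treatment of the potential term, and it stems from your choice of a \emph{fixed} number $K=K(\delta,M)$ of sublayers. On the blending layer $S_j$ the dangerous set is not $D_j^\eta=\{\operatorname*{dist}(u_j,\{\pm1\})\ge\eta\}$ but the ``wrong--phase'' set $\{x\in S_j:\ x\cdot\nu>\varepsilon_j,\ u_j(x)<1-a_W\}$ (and its reflection), where $u_j$ may sit exactly at $-1$ so that $W(u_j)=0$ while $v_j\in(-1,1)$ and $W(v_j)$ is of order $M_W$. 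Your monotonicity argument does not apply there, and your $\chi_{D_j^\eta}\le W(u_j)/m_\eta$ bound misses these points entirely. The only available control is
\[
\frac{1}{\varepsilon_j}\,\mathcal L^n\bigl(S_j\cap\{|u_j-w^\nu|>a_W\}\bigr)\ \le\ \frac{1}{a_W^{2}\,\varepsilon_j}\int_{S_j}|u_j-g_j|^2\,dx,
\]
and with $K$ fixed the right-hand side is, at best after layer selection, of order $\|u_j-g_j\|_{L^2}^2/(K\varepsilon_j)$, which need not tend to $0$ (there is no a~priori relation between $\|u_j-w^\nu\|_{L^2}$ and $\varepsilon_j$). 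For the same reason your step~(b), ``replacing $u_j^{(1)}$ by $w^\nu$'' on the annulus, is not well defined in $W^{1,2}$: any $W^{1,2}$ interpolation to $\pm1$ incurs exactly this uncontrolled $W$-cost.

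The paper closes this gap by letting the number of layers depend on $j$: with $\eta_j:=\|u_j-g_j\|_{L^2(P_\rho)}^2$ (plus a harmless nonlocal tail), one takes $m_j\sim(\sqrt{\varepsilon_j}+\sqrt{\eta_j})/\varepsilon_j\to\infty$ and performs the averaging over the augmented functional $\mathcal G_j$ of (\ref{G j}), which \emph{includes} the term $\varepsilon_j^{-1}\int(u_j-g_j)^2$. On the selected layer $S_j$ this forces $\varepsilon_j^{-1}\int_{S_j}(u_j-g_j)^2\le K\sqrt{\varepsilon_j}+\sqrt{\eta_j}\to0$, which is precisely what the $W$-estimate needs (see (\ref{F62})--(\ref{F19})). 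The price is that the cut-off now has $|\nabla\varphi_j|\sim m_j/\delta$, but the same good-layer bound also makes $\varepsilon_j\int_{S_j}|\nabla u_j|^2$ and the nonlocal cross terms small (estimates (\ref{E18})--(\ref{E25})), so all error contributions still vanish. In short: keep your decomposition of $|\nabla v_j(x)-\nabla v_j(y)|^2$ and your use of Lemmas~\ref{lemma g}, \ref{lemma separated}, \ref{F53}, drop the separate truncation and ``force to $\pm1$'' steps (they are unnecessary), and replace the fixed $K$ by the $j$-dependent $m_j$ above together with the augmented functional $\mathcal G_j$; then the argument goes through.
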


\begin{remark}
\label{G8} By choosing a suitable subsequence, under the same assumptions of
Theorem \ref{modification} we obtain that
\begin{equation}
\liminf_{j\rightarrow+\infty}\mathcal{F}_{\varepsilon_{j}}(v_{j},P_{\rho}%
)\leq\liminf_{j\rightarrow+\infty}\mathcal{F}_{\varepsilon_{j}}(u_{j},P_{\rho
})+\kappa_{1}\delta~. \label{G9}%
\end{equation}

\end{remark}

To prove Theorem \ref{modification} we use the estimate of the following lemma.

\begin{lemma}
\label{lemma g}Let $\varepsilon>0$, let $y\in\mathbb{R}^{n}$, let $A$ be a
measurable subset of $\mathbb{R}^{n}$, and let $g:A\rightarrow\mathbb{R}$ be a
measurable function such that
\begin{equation}
0\leq g(x)\leq(a|x-y|)^{2}\wedge b^{2}\quad\text{for every }x\in A~,
\label{E1}%
\end{equation}
for some constants $a$ and $b$. Then
\begin{equation}
\int_{A}J_{\varepsilon}(x-y)g(x)~dx\leq M_{J}\big((\varepsilon a)\vee
b\big)^{2}~, \label{E2}%
\end{equation}
where $M_{J}$ is the constant given in (\ref{J1}) and $\alpha\vee\beta
:=\max\{\alpha,\beta\}$.
\end{lemma}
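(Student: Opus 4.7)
The plan is to reduce the claim to the hypothesis (\ref{J1}) defining $M_J$ by a change of variables followed by a pointwise comparison of the integrand with $J(w)(|w|\wedge|w|^2)$.

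First I would extend $g$ by zero outside $A$ so that the integral is over all of $\mathbb{R}^n$, translate $x\mapsto x+y$ to move the singularity of $J_{\varepsilon}$ to the origin, and then rescale $x=\varepsilon w$. Using the definition $J_{\varepsilon}(x)=\varepsilon^{-n}J(x/\varepsilon)$ and the bound (\ref{E1}), this immediately gives
\[
\int_A J_{\varepsilon}(x-y) g(x)\,dx\le\int_{\mathbb{R}^n}J(w)\bigl((a\varepsilon|w|)^{2}\wedge b^{2}\bigr)\,dw.
\]

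The key pointwise inequality I would then establish is
\[
(a\varepsilon|w|)^{2}\wedge b^{2}\le\bigl((\varepsilon a)\vee b\bigr)^{2}\bigl(|w|\wedge|w|^{2}\bigr)\quad\text{for every }w\in\mathbb{R}^n.
\]
To verify this I would split into $|w|\le1$ and $|w|>1$. On $|w|\le 1$, I drop the $b^{2}$ part of the minimum to get $(a\varepsilon|w|)^{2}=(a\varepsilon)^{2}|w|^{2}\le((\varepsilon a)\vee b)^{2}|w|^{2}=((\varepsilon a)\vee b)^{2}(|w|\wedge|w|^{2})$. On $|w|>1$, I drop the $(a\varepsilon|w|)^{2}$ part to get $b^{2}\le((\varepsilon a)\vee b)^{2}\le((\varepsilon a)\vee b)^{2}|w|=((\varepsilon a)\vee b)^{2}(|w|\wedge|w|^{2})$.

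Inserting this pointwise bound into the integral and recognizing the constant $M_J$ from (\ref{J1}) yields (\ref{E2}). There is no real obstacle here; the only point that requires a little attention is handling the two regimes $\varepsilon a\le b$ and $\varepsilon a>b$ in a unified way, which is exactly what the split $|w|\le 1$ versus $|w|>1$ accomplishes via the factor $(\varepsilon a)\vee b$.
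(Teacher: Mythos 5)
Your proof is correct and follows essentially the same route as the paper: both split the region at $|x-y|=\varepsilon$ (equivalently $|w|=1$ after rescaling), bound $g$ by $(a|x-y|)^2$ on the near part and by $b^2$ on the far part, and then recognize the constant $M_J$ after the change of variables $w=(x-y)/\varepsilon$. The only cosmetic difference is that you perform the change of variables first and then do the pointwise comparison, whereas the paper splits the integral first and rescales afterward.
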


\begin{proof}
Using (\ref{J4}) and the change of variables $z=(x-y)/\varepsilon$, we
obtain
\begin{align}
\int_{A}J_{\varepsilon}(x-y)g(x)~dx  &  \leq a^{2}\int_{A\cap B_{\varepsilon
}(y)}J_{\varepsilon}(x-y)|x-y|^{2}~dx\nonumber\\
&  \quad+b^{2}\int_{A\setminus B_{\varepsilon}(y)}J_{\varepsilon}%
(x-y)\frac{|x-y|}{\varepsilon}~dx\nonumber\\
&  \leq\varepsilon^{2}a^{2}\int_{B_{1}(0)}J(z)|z|^{2}~dz+b^{2}\int%
_{\mathbb{R}^{n}\setminus B_{1}(0)}J(z)|z|~dz~.\nonumber
\end{align}
The conclusion follows from (\ref{J1}).
\end{proof}

\begin{lemma}
\label{lemma separated}Let $0<\varepsilon<\delta$, let $A$ and $B$ be open
sets in $\mathbb{R}^{n}$, with $\operatorname*{dist}(A,B)\geq\delta$, and let
$u\in W_{\operatorname*{loc}}^{1,2}(A\cup B)$. Then
\begin{equation}
\mathcal{J}_{\varepsilon}(u,A,B)\leq\varepsilon\omega_{1}\Bigl(\frac
{\varepsilon}{\delta}\Bigr)\int_{A\cup B}|\nabla u(x)|^{2}dx~,
\label{estimate separated}%
\end{equation}
where
\begin{equation}
\omega_{1}(t):=2\int_{\mathbb{R}^{n}\setminus B_{1/t}(0)}%
J(z)|z|~dz\rightarrow0 \label{little omega}%
\end{equation}
as $t\rightarrow0^{+}$.
\end{lemma}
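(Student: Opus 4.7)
The plan is to reduce (\ref{estimate separated}) to a one-sided integral bound via the pointwise inequality $|a-b|^{2}\le 2|a|^{2}+2|b|^{2}$ and then exploit that on the support of integration one automatically has $|z|\ge 1$, so the factor $|z|$ appearing in $\omega_{1}$ can be inserted at no cost.

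First I would expand $|\nabla u(x)-\nabla u(y)|^{2}\le 2|\nabla u(x)|^{2}+2|\nabla u(y)|^{2}$, split the double integral in $\mathcal{J}_{\varepsilon}(u,A,B)$ into two pieces, and apply Fubini. Using the evenness of $J$ and the symmetry (\ref{J symm}), both pieces reduce to estimating $I(x):=\int_{B}J_{\varepsilon}(x-y)\,dy$ for $x\in A$ (and its analogue with $A$ and $B$ swapped). Via the change of variables $z=(x-y)/\varepsilon$, which cancels the $\varepsilon^{-n}$ in $J_{\varepsilon}$, the hypothesis $\operatorname*{dist}(A,B)\ge\delta$ together with $0<\varepsilon<\delta$ forces $|z|\ge\delta/\varepsilon\ge 1$, so
\[
I(x)\le\int_{\{|z|\ge\delta/\varepsilon\}}J(z)\,dz\le\int_{\{|z|\ge\delta/\varepsilon\}}J(z)|z|\,dz=\tfrac12\omega_{1}(\varepsilon/\delta),
\]
where the second inequality uses $|z|\ge 1$. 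Combining this with the outer integrations of $|\nabla u(x)|^{2}$ over $A$ and $|\nabla u(y)|^{2}$ over $B$, and absorbing the factor $2$ from the elementary split, I recover exactly (\ref{estimate separated}).

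For the asymptotic $\omega_{1}(t)\to 0$ as $t\to 0^{+}$, note that on $\{|z|\ge 1\}$ one has $|z|\wedge|z|^{2}=|z|$, so (\ref{J1}) yields $J(\cdot)|\cdot|\in L^{1}(\{|z|\ge 1\})$; since the sets $\{|z|\ge 1/t\}$ shrink to the empty set as $t\to 0^{+}$, dominated convergence gives the vanishing. No genuine obstacle is expected: the argument is a short bookkeeping computation. The only observation worth isolating is that the distance assumption pushes the change-of-variables region into $\{|z|\ge 1\}$, which is precisely where $J(z)\,dz$ can be exchanged for $J(z)|z|\,dz$ without paying any constant, allowing one to recognize $\omega_{1}$ on the right-hand side.
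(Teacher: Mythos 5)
Your proposal is correct and follows essentially the same route as the paper's proof: split $|\nabla u(x)-\nabla u(y)|^{2}\le 2|\nabla u(x)|^{2}+2|\nabla u(y)|^{2}$, apply Fubini, use $\operatorname{dist}(A,B)\ge\delta$ and the change of variables $z=(x-y)/\varepsilon$ to restrict the inner integral to $\{|z|\ge\delta/\varepsilon\}$, then insert the factor $|z|\ge 1$ to recognize $\tfrac12\omega_{1}(\varepsilon/\delta)$, whence the outer factors of $2$ and $\tfrac12$ cancel exactly. The closing observation that $J(\cdot)|\cdot|\in L^{1}(\{|z|\ge 1\})$ by (\ref{J1}) and dominated convergence for $\omega_{1}(t)\to 0$ is also how the paper concludes.
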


\begin{proof}
Using a change of variables we obtain
\begin{align*}
\mathcal{J}_{\varepsilon}  &  (u,A,B)=\varepsilon\int_{A}\int_{B}%
J_{\varepsilon}(x-y)|\nabla u(x)-\nabla u(y)|^{2}dxdy\\
&  \leq2\varepsilon\int_{B}\Big(\int_{A}J_{\varepsilon}(x-y)~dy\Big)|\nabla
u(x)|^{2}dx\\
&  \ +2\varepsilon\int_{A}\Big(\int_{B}J_{\varepsilon}(x-y)~dx\Big)|\nabla
u(y)|^{2}dy\\
&  \leq2\varepsilon\int_{B}\Big(\int_{\mathbb{R}^{n}\setminus B_{\delta}%
(x)}J_{\varepsilon}(x-y)~dy\Big)|\nabla u(x)|^{2}dx\\
&  \ +2\varepsilon\int_{A}\Big(\int_{\mathbb{R}^{n}\setminus B_{\delta}%
(y)}J_{\varepsilon}(x-y)~dx\Big)|\nabla u(y)|^{2}dy\\
\end{align*}
\begin{align*}
&  \leq2\varepsilon\int_{\mathbb{R}^{n}\setminus B_{\frac{\delta}{\varepsilon
}}(0)}J(z)~dz\int_{A\cup B}|\nabla u(x)|^{2}dx\\
&  \leq2\varepsilon\int_{\mathbb{R}^{n}\setminus B_{\frac{\delta}{\varepsilon
}}(0)}J(z)|z|~dz\int_{A\cup B}|\nabla u(x)|^{2}dx~.
\end{align*}
This leads to (\ref{estimate separated}). The fact that $\omega_{1}%
(t)\rightarrow0^{+}$ as $t\rightarrow0^{+}$ follows from (\ref{J1}).
\end{proof}

\medskip
\begin{proof}
[Proof of Theorem \ref{modification}]It is not restrictive to assume that
$\delta<\frac{1}{4}$, $\varepsilon_{j}<\delta^{2}$, and $8\varepsilon
_{j}\gamma_{J}<\delta$ for every $j$. To simplify the notation, set
$\widetilde{u}_{j}:=w^{\nu}\!\ast\theta_{\varepsilon_{j}}$. From
(\ref{mollifier support}) and (\ref{derivatives convolution}) it follows that%
\begin{equation}
\varepsilon_{j}\int_{P_{\rho}}|\nabla\widetilde{u}_{j}(x)|^{2}dx\leq
C_{\theta,P}\quad\text{for every }j~, \label{G1}%
\end{equation}
for some constant $C_{\theta,P}>0$ depending only on $P$ and $\theta$.
%Moreover by (\ref{Sigma_rho}), (\ref{F75}), and (\ref{admonbou}) we
%get
%\begin{equation}
%\int_{S} (u^{\nu}(x)-\widetilde{u}_{j}(x))^{2}dx\leq\mathcal{L}^{n}%
%(S\cap\Sigma_{\varepsilon_{j}}^{\nu})\leq2(n-1)\delta\varepsilon_{j}~,\label{F76}%
%\end{equation}
%where we used the elementary equality
%\begin{equation}
%\mathcal{L}^{n}(S\cap\Sigma_{\varepsilon_{j}}^{\nu})=2(n-1)\delta\varepsilon_{j}~.\label{F77}%
%\end{equation}

If the right-hand side of (\ref{G7}) is infinite, then there is nothing to
prove. Thus, by extracting a subsequence (not relabeled), without loss of
generality we may assume that
\begin{equation}
\mathcal{F}_{\varepsilon_{j}}(u_{j},P_{\rho})\leq M<+\infty\quad\text{for
every }j~, \label{exilim}%
\end{equation}
for a suitable constant $M>0$.

The functions $v_{j}$ will be constructed as
\begin{equation}
v_{j}:=\varphi_{j}u_{j}+(1-\varphi_{j})\widetilde{u}_{j}~, \label{vj}%
\end{equation}
where $\varphi_{j}\in C_{c}^{\infty}(\mathbb{R}^{n})$ are suitable cut-off
functions satisfying $\varphi_{j}(x)=1$ for $x\in(P_{\rho})_{\delta}$ and
$\varphi_{j}(x)=0$ for $x\notin(P_{\rho})_{\delta/2}$. Introduce the set
\begin{equation}
S:=\Bigl\{x\in P_{\rho}:~\,\frac{\delta}{2}<\operatorname*{dist}%
\big(x,\partial P_{\rho}\big)\leq\delta\Bigr\}\,. \label{F70}%
\end{equation}
To construct the cut-off functions we divide $S$ into $m_{j}$ pairwise
disjoint layers of width $\frac{\delta}{2m_{j}}$.

Consider the sequence $\{\eta_{j}\}$ defined by
\begin{equation}
\eta_{j}:=\int_{P_{\rho}}(u_{j}(x)-\widetilde{u}_{j}(x))^{2}dx+\int_{P_{\rho}%
}\int_{P_{\rho}\setminus B_{\varepsilon_{j}}(y)}J_{\varepsilon_{j}}%
(x-y)(u_{j}(x)-\widetilde{u}_{j}(x))^{2}dxdy~. \label{F18}%
\end{equation}
By Fubini's theorem, a change of variables, (\ref{J1}), and (\ref{F18}), we
obtain%
\begin{align*}
&  \int_{P_{\rho}}\int_{P_{\rho}\setminus B_{\varepsilon_{j}}(y)}%
J_{\varepsilon_{j}}(x-y)(u_{j}(x)-\widetilde{u}_{j}(x))^{2}dxdy\\
&  =\int_{P_{\rho}}\biggl(\int_{P_{\rho}\setminus B_{\varepsilon_{j}}%
(x)}J_{\varepsilon_{j}}(x-y)~dy\biggr)(u_{j}(x)-\widetilde{u}_{j}(x))^{2}dx\\
&  \leq\int_{P_{\rho}}(u_{j}(x)-\widetilde{u}_{j}(x))^{2}dx\int_{\mathbb{R}%
^{n}\setminus B_{1}(0)}J(z)~dz\leq M_{J}\int_{P_{\rho}}(u_{j}(x)-\widetilde{u}%
_{j}(x))^{2}dx~.
\end{align*}
Hence, $\eta_{j}\rightarrow0^{+}$ as $j\rightarrow+\infty$, because
$\{u_{j}\}$ and $\{\widetilde{u}_{j}\}$ converge to $w^{\nu}$ in
$L^{2}(P_{\rho})$. Without loss of generality, we assume that $\eta_{j}%
<\frac{1}{4}$ for every $j$. Let $m_{j}$ be the unique integer such that
\begin{equation}
\frac{\sqrt{\varepsilon_{j}}+\sqrt{\eta_{j}}}{\varepsilon_{j}}<m_{j}\leq
\frac{\sqrt{\varepsilon_{j}}+\sqrt{\eta_{j}}}{\varepsilon_{j}}+1~. \label{F79}%
\end{equation}
Since $\varepsilon_{j}<1$ we have
\begin{equation}
\frac{1}{m_{j}}<\sqrt{\varepsilon_{j}}\quad\text{and}\quad m_{j}<2\frac
{\sqrt{\varepsilon_{j}}+\sqrt{\eta_{j}}}{\varepsilon_{j}} \label{F80}%
\end{equation}
and
\begin{equation}
\frac{\eta_{j}}{m_{j}\varepsilon_{j}}\leq\sqrt{\varepsilon_{j}}+\sqrt{\eta
_{j}}\quad\text{and}\quad m_{j}\varepsilon_{j}\leq2(\sqrt{\varepsilon_{j}%
}+\sqrt{\eta_{j}})~. \label{F65}%
\end{equation}

%Define
%\begin{equation}
%\sigma_{j}:=\varepsilon_{j}\sqrt{m_{j}}\label{F78}%
%\end{equation}

Divide $S$ into $m_{j}$ pairwise disjoint layers of width $\frac{\delta
}{2m_{j}}$\thinspace,%
\begin{equation}
S_{j}^{i}:=\bigg\{x\in P_{\rho}:~\,\frac{\delta}{2}+\frac{(i-1)\delta}{2m_{j}%
}<\operatorname*{dist}\big(x,\partial P_{\rho}\big)<\frac{\delta}{2}%
+\frac{i\delta}{2m_{j}}\bigg\}~, \label{strip S 1 j}%
\end{equation}
$i=1,\dots,m_{j}$.

For every open set $A\subset\mathbb{R}^{d}$ define
\begin{align}
\mathcal{G}_{j}(A)  &  :=\mathcal{J}_{\varepsilon_{j}}(u_{j},A,P_{\rho
})+\mathcal{W}_{\varepsilon_{j}}(u_{j},A)\nonumber\\
&\quad+\,\varepsilon_{j}\int_{A}|\nabla
u_{j}(x)|^{2}dx\label{G j}+\frac{1}{\varepsilon_{j}}\int_{A}(u_{j}(x)-\widetilde{u}_{j}%
(x))^{2}dx\\
&\quad+\frac{1}{\varepsilon_{j}}\int_{A}\int_{P_{\rho}\setminus
B_{\varepsilon_{j}}(y)}J_{\varepsilon_{j}}(x-y)(u_{j}(x)-\widetilde{u}%
_{j}(x))^{2}dxdy~.\nonumber
\end{align}
Hence, using (\ref{exilim}), (\ref{F18}), and Lemma \ref{F53}, we obtain%
\[
\sum_{i=1}^{m_{j}}\mathcal{G}_{j}(S_{j}^{i})\leq\mathcal{G}_{j}(S)\leq K-1+\frac{\eta_{j}}{\varepsilon_{j}}~,
\]
where $K:=M+c_{J,W}^{(n)}M+1$, and so there exists $i_{j}\in\{1,\dots,m_{j}\}$
such that, setting%
\[
S_{j}:=S_{j}^{i_{j}}~,
\]
we have
\begin{equation}
\mathcal{G}_{j}(S_{j})\leq\frac{K-1}{m_{j}}+\frac{\eta_{j}}{m_{j}%
\varepsilon_{j}}\leq K\sqrt{\varepsilon_{j}}+\sqrt{\eta_{j}}\leq K~,
\label{Ennio}%
\end{equation}
where in the last inequalities we used (\ref{F80}), (\ref{F65}), and the fact
that $\varepsilon_{j}<\frac{1}{4}$, $\eta_{j}<\frac{1}{4}$, and $K\geq 1$.
Define%
\begin{align}
A_{j}  &  :=\biggl\{x\in P_{\rho}:~\operatorname*{dist}(x,\partial P_{\rho
})>\frac{\delta}{2}+\frac{i_{j}\delta}{2m_{j}}\biggr\}~,\nonumber\\
A_{j}^{\ast}  &  :=\biggl\{x\in P_{\rho}:~\operatorname*{dist}(x,\partial
P_{\rho})>\frac{\delta}{2}+\frac{i_{j}\delta}{2m_{j}}-\frac{\delta}{4m_{j}%
}\biggr\}~,\label{sets}\\
B_{j}  &  :=\biggl\{x\in P_{\rho}:~\operatorname*{dist}(x,\partial P_{\rho
})<\frac{\delta}{2}+\frac{(i_{j}-1)\delta}{2m_{j}}\biggr\}~,\nonumber
\end{align}
and let
\[
\varphi_{j}(x):=\int_{A_{j}^{\ast}}\theta_{\!\!\!_{\frac{\delta}{4m_{j}}}%
}\!\!(x-y)\,dy~.
\]
Then $\varphi_{j}\in C_{c}^{\infty}(\mathbb{R}^{n})$ and the following
properties hold, thanks to (\ref{derivatives convolution}) and (\ref{F80}):
\begin{align}
&  \varphi_{j}=1\text{ in }A_{j},\quad0\leq\varphi_{j}\leq1\text{ in }%
S_{j},\quad\varphi_{j}=0\text{ in }B_{j}~,\label{cut-offc}\\
&  \sup|\nabla\varphi_{j}|\leq8\frac{C_{\theta}}{\delta}\frac{\sqrt
{\varepsilon_{j}}+\sqrt{\eta_{j}}}{\varepsilon_{j}}\leq\frac{8C_{\theta}%
}{\delta\varepsilon_{j}}~,\ \sup|\nabla^{2}\varphi_{j}|\leq2^{7}%
\frac{C_{\theta}}{\delta^{2}}\frac{\varepsilon_{j}+\eta_{j}}{\varepsilon
_{j}^{2}}~, \label{deriphi}%
\end{align}
where $C_{\theta}$ is the constant given in (\ref{derivatives convolution}).

Let $v_{j}$ be the function defined by (\ref{vj}). Since $(P_{\rho})_{\delta
}\subset A_{j}$ and $P_{\rho}\setminus(P_{\rho})_{\delta/2}\subset B_{j}$, we
have that $v_{j}=u_{j}$ in $(P_{\rho})_{\delta}$ and $v_{j}=\widetilde{u}_{j}$
on $P_{\rho}\setminus(P_{\rho})_{\delta/2}$. Moreover, since $u_{j}$ and
$\widetilde{u}_{j}$ converge to $w^{\nu}$ in $L^{2}(P_{\rho})$, we have that
$v_{j}\rightarrow w^{\nu}$ in $L^{2}(P_{\rho})$. Note that
\begin{equation}
\nabla v_{j}:=\varphi_{j}\nabla u_{j}+(1-\varphi_{j})\nabla\widetilde{u}%
_{j}+(u_{j}-\widetilde{u}_{j})\nabla\varphi_{j}~. \label{E10}%
\end{equation}
Fix $0<\eta<\frac{1}{2}$. Using the inequality $|a+b|^{2}\leq\frac{|a|^{2}%
}{1-\eta}+\frac{|b|^{2}}{\eta}$, we obtain
\begin{align}
|\nabla v_{j}(x)-{}  &  \nabla v_{j}(y)|^{2}\leq\frac{1}{1-\eta}%
\big\vert\varphi_{j}(x)\nabla u_{j}(x)-\varphi_{j}(y)\nabla u_{j}%
(y)\nonumber\\
&  \qquad+(1-\varphi_{j}(x))\nabla\widetilde{u}_{j}(x)-(1-\varphi
_{j}(y))\nabla\widetilde{u}_{j}(y)\big\vert^{2}\label{E11}\\
&  +\frac{1}{\eta}\big\vert(u_{j}(x)-\widetilde{u}_{j}(x))\nabla\varphi
_{j}(x)-(u_{j}(y)-\widetilde{u}_{j}(y))\nabla\varphi_{j}(y)\big\vert^{2}%
~.\nonumber
\end{align}
In view of the same inequality and the convexity of $|\cdot|^{2}$, we get
\begin{align}
\big\vert\varphi_{j}(x)  &  \nabla u_{j}(x)-\varphi_{j}(y)\nabla
u_{j}(y)+(1-\varphi_{j}(x))\nabla\widetilde{u}_{j}(x)-(1-\varphi_{j}%
(y))\nabla\widetilde{u}_{j}(y)\big\vert^{2}\nonumber\\
&  =\big\vert\varphi_{j}(x)(\nabla u_{j}(x)-\nabla u_{j}(y))+(\varphi
_{j}(x)-\varphi_{j}(y))\nabla u_{j}(y)\nonumber\\
&  \quad+(1-\varphi_{j}(x))(\nabla\widetilde{u}_{j}(x)-\nabla\widetilde{u}%
_{j}(y))-(\varphi_{j}(x)-\varphi_{j}(y))\nabla\widetilde{u}_{j}%
(y)\big\vert^{2}\nonumber\\
&  \leq\frac{1}{1-\eta}\big\vert\varphi_{j}(x)(\nabla u_{j}(x)-\nabla
u_{j}(y))+(1-\varphi_{j}(x))(\nabla\widetilde{u}_{j}(x)-\nabla\widetilde{u}%
_{j}(y))\big\vert^{2}\nonumber\\
&  \quad+\frac{1}{\eta}\big\vert(\varphi_{j}(x)-\varphi_{j}(y))(\nabla
u_{j}(y)-\nabla\widetilde{u}_{j}(y))\big\vert^{2}\nonumber\\
&  \leq\frac{\varphi_{j}(x)}{1-\eta}\big\vert\nabla u_{j}(x)-\nabla
u_{j}(y)\big\vert^{2}+\frac{1-\varphi_{j}(x)}{1-\eta}\big\vert\nabla
\widetilde{u}_{j}(x)-\nabla\widetilde{u}_{j}(y)\big\vert^{2}\nonumber\\
&  \quad+\frac{1}{\eta}(\varphi_{j}(x)-\varphi_{j}(y))^{2}\big\vert\nabla
u_{j}(y)-\nabla\widetilde{u}_{j}(y)\big\vert^{2}~.\nonumber
\end{align}
This inequality and (\ref{E11}) yield
\begin{align}
|\nabla v_{j}(x)-\nabla v_{j}(y)|^{2}  &  {}\leq\frac{\varphi_{j}(x)}%
{(1-\eta)^{2}}\big\vert\nabla u_{j}(x)-\nabla u_{j}(y)\big\vert^{2}\nonumber\\
&  \quad+\frac{1-\varphi_{j}(x)}{(1-\eta)^{2}}\big\vert\nabla\widetilde{u}%
_{j}(x)-\nabla\widetilde{u}_{j}(y)\big\vert^{2}\nonumber\\
&  \quad+\frac{2}{\eta}(\varphi_{j}(x)-\varphi_{j}(y))^{2}\big\vert\nabla
u_{j}(y)-\nabla\widetilde{u}_{j}(y)\big\vert^{2}\nonumber\\
&  \quad+\frac{1}{\eta}\big\vert(u_{j}(x)-\widetilde{u}_{j}(x))\nabla
\varphi_{j}(x)-(u_{j}(y)-\widetilde{u}_{j}(y))\nabla\varphi_{j}%
(y)\big\vert^{2}~,\nonumber
\end{align}
hence for every pair of open sets $A$, $B\subset P_{\rho}$ we obtain by (\ref{J and W local})\begin{align}
&  \mathcal{J}_{\varepsilon_{j}}(v_{j},A,B)\leq\frac{\mathcal{J}%
_{\varepsilon_{j}}(u_{j},A,B\cap(A_{j}\cup S_{j}))}{(1-\eta)^{2}}%
+\frac{\mathcal{J}_{\varepsilon_{j}}(\widetilde{u}_{j},A,B\cap(S_{j}\cup
B_{j}))}{(1-\eta)^{2}}\nonumber\\
&  +\frac{2\varepsilon_{j}}{\eta}\int_{A}\Big(\int_{B}J_{\varepsilon_{j}%
}(x-y)(\varphi_{j}(x)-\varphi_{j}(y))^{2}dx\Big)|\nabla u_{j}(y)-\nabla
\widetilde{u}_{j}(y)|^{2}dy\label{E15}\\
&  +\frac{\varepsilon_{j}}{\eta}\int_{A}\Big(\int_{B}\!J_{\varepsilon_{j}%
}(x\!-\!y)\big\vert(u_{j}(x)\!-\!\widetilde{u}_{j}(x))\nabla\varphi
_{j}(x)-(u_{j}(y)\!-\!\widetilde{u}_{j}(y))\nabla\varphi_{j}(y)\big\vert^{2}%
dxdy.\nonumber
\end{align}

By (\ref{J add}) we have
\begin{align}
\mathcal{J}_{\varepsilon_{j}}(v_{j},P_{\rho})={}  &  \mathcal{J}%
_{\varepsilon_{j}}(u_{j},A_{j})+\mathcal{J}_{\varepsilon_{j}}(v_{j}%
,S_{j})+\mathcal{J}_{\varepsilon_{j}}(\widetilde{u}_{j},B_{j})\nonumber\\
&  +2\mathcal{J}_{\varepsilon_{j}}(v_{j},S_{j},A_{j}\cup B_{j})+2\mathcal{J}%
_{\varepsilon_{j}}(v_{j},A_{j},B_{j})~. \label{E9}%
\end{align}
We now estimate all the terms but the first on the right-hand side of
(\ref{E9}).

By (\ref{E15}),
\begin{align}
&  \mathcal{J}_{\varepsilon_{j}}(v_{j},S_{j})\leq\frac{\mathcal{J}%
_{\varepsilon_{j}}(u_{j},S_{j})}{(1-\eta)^{2}}+\frac{\mathcal{J}%
_{\varepsilon_{j}}(\widetilde{u}_{j},S_{j})}{(1-\eta)^{2}}\label{E16}\\
&  +\frac{2\varepsilon_{j}}{\eta}\int_{S_{j}}\Big(\int_{S_{j}}J_{\varepsilon
_{j}}(x-y)(\varphi_{j}(x)-\varphi_{j}(y))^{2}dx\Big)|\nabla u_{j}%
(y)-\nabla\widetilde{u}_{j}(y)|^{2}dy\nonumber\\
&  +\frac{\varepsilon_{j}}{\eta}\int_{S_{j}}\Big(\int_{S_{j}}\!J_{\varepsilon
_{j}}(x\!-\!y)\big\vert(u_{j}(x)\!-\!\widetilde{u}_{j}(x))\nabla\varphi
_{j}(x)-(u_{j}(y)\!-\!\widetilde{u}_{j}(y))\nabla\varphi_{j}(y)\big\vert^{2}%
dxdy.\nonumber
\end{align}
From (\ref{J add}) and (\ref{mollifier support}) it follows that
\begin{align}
\mathcal{J}_{\varepsilon_{j}}(  &  \widetilde{u}_{j},S_{j}\cup B_{j}%
)=\mathcal{J}_{\varepsilon_{j}}(\widetilde{u}_{j},(S_{j}\cup B_{j})\cap
P_{2\varepsilon_{j}})\nonumber\\
&  +2\mathcal{J}_{\varepsilon_{j}}(\widetilde{u}_{j},(S_{j}\cup B_{j})\cap
P_{2\varepsilon_{j}},(S_{j}\cup B_{j})\setminus P_{2\varepsilon_{j}})~.
\label{E4}%
\end{align}
By the mean value theorem and by (\ref{derivatives convolution}), for every
$y\in P_{\rho}$ the function $g(x):=|\nabla\widetilde{u}_{j}(x)-\nabla
\widetilde{u}_{j}(y)|^{2}$ satisfies (\ref{E1}) with $a=\frac{C_{\theta}%
}{\varepsilon_{j}^{2}}$ and $b=\frac{2C_{\theta}}{\varepsilon_{j}}$, hence by
Lemma~\ref{E1} we obtain
\[
\int_{P_{\rho}}J_{\varepsilon_{j}}(x-y)|\nabla\widetilde{u}_{j}
%:
(x)-\nabla\widetilde{u}_{j}(y)|^{2}dx\leq 4C_{\theta}^{2}M_{J}\frac
{1}{\varepsilon_{j}^{2}}~.
\]
Therefore by (\ref{J and W local}) and (\ref{E4}) we have
\begin{align*}
\mathcal{J}_{\varepsilon_{j}}(\widetilde{u}_{j},S_{j},S_{j}\cup B_{j}%
)+\mathcal{J}_{\varepsilon_{j}}(\widetilde{u}_{j},B_{j})  &  \leq
\mathcal{J}_{\varepsilon_{j}}(\widetilde{u}_{j},S_{j}\cup B_{j})\\
\leq\mathcal{L}^{n}((S_{j}\cup B_{j})\cap P_{2\varepsilon_{j}})\,4C_{\theta}^{2}M_{J}\frac{1}{\varepsilon_{j}}  &  .
\end{align*}
We now use the fact that there exist two constants $C_{P_{\rho}}>0$ and
$\delta_{P_{\rho}}>0$, depending only on $P_{\rho}$, such that
\begin{equation}
\mathcal{L}^{n}(((P_{\rho})_{\delta_{1}}\setminus(P_{\rho})_{\delta_{2}})\cap
P_{\varepsilon})\leq C_{P_{\rho}}\varepsilon(\delta_{2}-\delta_{1})
\label{E4a}%
\end{equation}
for every $0<\varepsilon<\delta_{1}<\delta_{2}<\delta_{P_{\rho}}$. Therefore
\begin{equation}
\mathcal{J}_{\varepsilon_{j}}(\widetilde{u}_{j},S_{j},S_{j}\cup B_{j}%
)+\mathcal{J}_{\varepsilon_{j}}(\widetilde{u}_{j},B_{j})\leq 4C_{P_{\rho}%
}C_{\theta}^{2}M_{J}\delta~. \label{E5}%
\end{equation}

By the mean value theorem, (\ref{F80}), and (\ref{deriphi}), for every $y\in
S_{j}$ the function $g(x)=(\varphi_{j}(x)-\varphi_{j}(y))^{2}$ satisfies
(\ref{E1}) with $a=\frac{8C_{\theta}}{\delta\varepsilon_{j}}$ and
$b=1\leq\frac{8C_{\theta}}{\delta}$, where we used the inequalities
$C_{\theta}\geq1$ and $\delta\leq1$. Hence, by Lemma \ref{lemma g} we have%
\begin{equation}
\int_{P_{\rho}}J_{\varepsilon_{j}}(x-y)(\varphi_{j}(x)-\varphi_{j}%
(y))^{2}dx\leq2^{6}\frac{C_{\theta}^{2}}{\delta^{2}}M_{J}~.\nonumber
\end{equation}
In turn, by (\ref{mollifier support}), (\ref{derivatives convolution}),
(\ref{G j}), and (\ref{Ennio}),%
\begin{align}
\frac{2\varepsilon_{j}}{\eta}  &  \int_{S_{j}}\Big(\int_{P_{\rho}%
}J_{\varepsilon_{j}}(x-y)(\varphi_{j}(x)-\varphi_{j}(y))^{2}dx\Big)|\nabla
u_{j}(y)-\nabla\widetilde{u}_{j}(y)|^{2}dy\nonumber\\
&  \leq2^{8}\frac{C_{\theta}^{2}M_{J}}{\eta\delta^{2}}\varepsilon_{j}%
\int_{S_{j}}|\nabla u_{j}(y)|^{2}dy+2^{8}\frac{C_{\theta}^{4}M_{J}}{\eta
\delta^{2}}\frac{1}{\varepsilon_{j}}\mathcal{L}^{n}(S_{j}\cap P_{2\varepsilon
_{j}})\label{E18}\\
&  \leq2^{8}\frac{C_{\theta}^{2}M_{J}}{\eta\delta^{2}}\big(K\sqrt
{\varepsilon_{j}}+\sqrt{\eta_{j}}\big)+2^{8}C_{P_{\rho}}\frac{C_{\theta}%
^{4}M_{J}}{\eta\delta}\sqrt{\varepsilon_{j}}~,\nonumber
\end{align}
where in the last inequality we used the estimate
\begin{equation}
\mathcal{L}^{n}(S_{j}\cap P_{\varepsilon_{j}})\leq C_{P_{\rho}}\delta
\frac{\varepsilon_{j}}{m_{j}}\leq C_{P_{\rho}}\delta\varepsilon_{j}%
\sqrt{\varepsilon_{j}}~, \label{F77}%
\end{equation}
which follows fron (\ref{F80}) and (\ref{E4a}).

To treat the last term on the right-hand side of (\ref{E16}) we observe that
\begin{align}
\big\vert(u_{j}(x)-{}  &  \widetilde{u}_{j}(x))\nabla\varphi_{j}%
(x)-(u_{j}(y)-\widetilde{u}_{j}(y))\nabla\varphi_{j}(y)\big\vert^{2}%
\nonumber\\
&  =\big\vert(u_{j}(x)-{}\widetilde{u}_{j}(x))(\nabla\varphi_{j}%
(x)-\nabla\varphi_{j}(y))+\nonumber\\
&  \ \quad+(u_{j}(x)-\widetilde{u}_{j}(x)-u_{j}(y)+\widetilde{u}_{j}%
(y))\nabla\varphi_{j}(y)\big\vert^{2}\nonumber\\
&  \leq2(u_{j}(x)-{}\widetilde{u}_{j}(x))^{2}\big\vert\nabla\varphi
_{j}(x)-\nabla\varphi_{j}(y)\big\vert^{2}\nonumber\\
&  \ +2(u_{j}(x)-\widetilde{u}_{j}(x)-u_{j}(y)+\widetilde{u}_{j}%
(y))^{2}\big\vert\nabla\varphi_{j}(y)\big\vert^{2}~.\nonumber
\end{align}
Integrating and using the symmetry of $J$, we obtain%
\begin{align}
&  \frac{\varepsilon_{j}}{\eta}\int_{S_{j}}\Big(\int_{S_{j}}\!J_{\varepsilon
_{j}}(x\!-\!y)\big\vert(u_{j}(x)\!-\!\widetilde{u}_{j}(x))\nabla\varphi
_{j}(x)-(u_{j}(y)\!-\!\widetilde{u}_{j}(y))\nabla\varphi_{j}(y)\big\vert^{2}%
dxdy\nonumber\\
&  \leq\frac{2\varepsilon_{j}}{\eta}\int_{S_{j}}\Big(\int_{S_{j}%
}J_{\varepsilon_{j}}(x-y)|\nabla\varphi_{j}(x)-\nabla\varphi_{j}%
(y)|^{2}dx\Big)(u_{j}(y)-{}\widetilde{u}_{j}(y))^{2}dy\label{E18a}\\
&  +\frac{2\varepsilon_{j}}{\eta}\int_{S_{j}}\Big(\int_{S_{j}}J_{\varepsilon
_{j}}(x-y)(u_{j}(x)-\widetilde{u}_{j}(x)-u_{j}(y)+\widetilde{u}_{j}%
(y))^{2}dx\Big)|\nabla\varphi_{j}(y)|^{2}dy~.\nonumber
\end{align}
By the mean value theorem and (\ref{deriphi}), for every $y\in S_{j}$ the
function $g(x)=|\nabla\varphi_{j}(x)-\nabla\varphi_{j}(y)|^{2}$ satisfies
(\ref{E1}) for every $x\in\mathbb{R}^{n}$, with $a=\frac{2^{7}C_{\theta}%
}{\delta^{2}}\frac{\varepsilon_{j}+\eta_{j}}{\varepsilon_{j}^{2}}\leq
\frac{2^{6}C_{\theta}}{\delta^{2}}\frac{\sqrt{\varepsilon_{j}}+\sqrt{\eta_{j}%
}}{\varepsilon_{j}^{2}}$ and $b=\frac{2^{4}C_{\theta}}{\delta}\frac{\sqrt
{\varepsilon_{j}}+\sqrt{\eta_{j}}}{\varepsilon_{j}}\leq\frac{2^{6}C_{\theta}%
}{\delta^{2}}\frac{\sqrt{\varepsilon_{j}}+\sqrt{\eta_{j}}}{\varepsilon_{j}}$,
where we used the inequalities $\delta\leq1$, $\varepsilon_{j}\leq\frac{1}{4}%
$, and $\eta_{j}\leq\frac{1}{4}$. Hence, by Lemma \ref{lemma g} we have%
\[
\int_{P_{\rho}}\!\!\!J_{\varepsilon_{j}}(x-y)|\nabla\varphi_{j}(x)-\nabla
\varphi_{j}(y)|^{2}dx\leq2^{13}\frac{C_{\theta}^{2}M_{J}}{\delta^{4}}%
\frac{\varepsilon_{j}+\eta_{j}}{\varepsilon_{j}^{2}}~.
\]
In turn, by (\ref{G j}) and (\ref{Ennio}),
\begin{align}
&  \frac{2\varepsilon_{j}}{\eta}\int_{S_{j}}\Big(\int_{P_{\rho}}%
J_{\varepsilon_{j}}(x-y)|\nabla\varphi_{j}(x)-\nabla\varphi_{j}(y)|^{2}%
dx\Big)(u_{j}(y)-{}\widetilde{u}_{j}(y))^{2}dy\nonumber\\
&  \leq2^{14}\frac{C_{\theta}^{2}M_{J}}{\eta\delta^{4}}(\varepsilon_{j}%
+\eta_{j})\frac{1}{\varepsilon_{j}}\int_{S_{j}}(u_{j}(y)-{}\widetilde{u}%
_{j}(y))^{2}dy\label{E19}\\
&  \leq2^{14}\frac{C_{\theta}^{2}M_{J}K}{\eta\delta^{4}}(\varepsilon_{j}%
+\eta_{j})~.\nonumber
\end{align}

Since $J$ is even, by Fubini's theorem, a change of variables, and
(\ref{deriphi}),%
\begin{align}
&  \frac{2\varepsilon_{j}}{\eta}\int_{S_{j}}\Big(\int_{P_{\rho}}%
\!\!J_{\varepsilon_{j}}(x-y)(u_{j}(x)-\widetilde{u}_{j}(x)-u_{j}%
(y)+\widetilde{u}_{j}(y))^{2}dx\Big)|\nabla\varphi_{j}(y)|^{2}dy\nonumber\\
&  \leq\frac{2^{8}C_{\theta}^{2}}{\eta\delta^{2}}\frac{\varepsilon_{j}%
+\eta_{j}}{\varepsilon_{j}}\int_{S_{j}}\Big(\int_{P_{\rho}\cap B_{\varepsilon
_{j}}\!(y)}\!\!\!\!\!\!\!\!\!\!\!\!\!\!\!\!\!\!\!\!J_{\varepsilon_{j}%
}(x-y)(u_{j}(x)-\widetilde{u}_{j}(x)-u_{j}(y)+\widetilde{u}_{j}(y))^{2}%
dx\Big)dy\nonumber\\
&  +\frac{2^{8}C_{\theta}^{2}}{\eta\delta^{2}}\frac{\varepsilon_{j}+\eta_{j}%
}{\varepsilon_{j}}\int_{S_{j}}\Big(\int_{P_{\rho}\setminus B_{\varepsilon_{j}%
}\!(y)}\!\!\!\!\!\!\!\!\!\!\!\!\!\!\!\!\!\!\!\!J_{\varepsilon_{j}}%
(x-y)(u_{j}(x)-\widetilde{u}_{j}(x)-u_{j}(y)+\widetilde{u}_{j}(y))^{2}%
dx\Big)dy\nonumber\\
&  \leq\frac{2^{8}C_{\theta}^{2}}{\eta\delta^{2}}\frac{\varepsilon_{j}%
+\eta_{j}}{\varepsilon_{j}}\!\int_{B_{\varepsilon_{j}}\!(0)}%
\!\!\!\!\!\!\!\!\!J_{\varepsilon_{j}}(z)\Big(\int_{S_{j}}(u_{j}%
(y+z)-\widetilde{u}_{j}(y+z)-u_{j}(y)+\widetilde{u}_{j}(y))^{2}%
dy\Big)dz\nonumber\\
&  +\frac{2^{9}C_{\theta}^{2}}{\eta\delta^{2}}\frac{\varepsilon_{j}+\eta_{j}%
}{\varepsilon_{j}}\int_{S_{j}}\Big(\int_{P_{\rho}\setminus B_{\varepsilon_{j}%
}\!(y)}\!\!\!\!\!\!\!\!\!\!\!\!\!\!\!\!\!\!J_{\varepsilon_{j}}(x-y)(u_{j}%
(x)-\widetilde{u}_{j}(x))^{2}dx\Big)dy\label{E20}\\
&  +\frac{2^{9}C_{\theta}^{2}}{\eta\delta^{2}}\frac{\varepsilon_{j}+\eta_{j}%
}{\varepsilon_{j}}\int_{S_{j}}\Big(\int_{P_{\rho}\setminus B_{\varepsilon_{j}%
}\!(y)}\!\!\!\!\!\!\!\!\!\!\!\!\!\!\!\!\!\!J_{\varepsilon_{j}}%
(x-y)dx\Big)(u_{j}(y)-\widetilde{u}_{j}(y))^{2}dy~.\nonumber
\end{align}
Since $\varepsilon_{j}<\delta/4$, by (\ref{F80}) and
(\ref{strip S 1 j}) for $y\in S_{j}$ and $|z|\leq\varepsilon_{j}$ the segment
joning $y$ and $y+z$ is contained in $(P_{\rho})_{\delta/4}$, and so by the mean value
theorem for $|z|\leq\varepsilon_{j}$,%
\[
\int_{S_{j}}(u_{j}(y+z)-\widetilde{u}_{j}(y+z)-u_{j}(y)+\widetilde{u}%
_{j}(y))^{2}dy\leq|z|^{2}\int_{(P_{\rho})_{\delta/4}}\!\!\!\!\!\!\!\!|\nabla u_{j}(y)-\nabla\widetilde{u}%
_{j}(y)|^{2}dy~.
\]
Therefore, recalling that $2\varepsilon_{j}\gamma_{J}<\delta/4$, it follows from (\ref{J4}), (\ref{J1}), (\ref{G1}), and Lemma \ref{F53}, that
\begin{align}
&  \frac{2^{8}C_{\theta}^{2}}{\eta\delta^{2}}\frac{\varepsilon_{j}%
\!+\!\eta_{j}}{\varepsilon_{j}}\int_{B_{\varepsilon_{j}}(0)}%
\!\!\!\!\!\!\!\!\!\!J_{\varepsilon_{j}}(z)\Big(\int_{S_{j}}(u_{j}%
(y\!+\!z)\!-\!\widetilde{u}_{j}(y\!+\!z)\!-\!u_{j}(y)\!+\!\widetilde{u}%
_{j}(y))^{2}dy\Big)dz\nonumber\\
&  \leq\frac{2^{8}C_{\theta}^{2}}{\eta\delta^{2}}\frac{\varepsilon_{j}%
+\eta_{j}}{\varepsilon_{j}}\int_{B_{\varepsilon_{j}}(0)}%
\!\!\!\!\!\!\!\!J_{\varepsilon_{j}}(z)|z|^{2}dz
\int_{(P_{\rho})_{\delta/4}}\!\!\!\!\!\!\!\!|\nabla
u_{j}(y)-\nabla\widetilde{u}_{j}(y)|^{2}dy\nonumber\\
&  \leq\frac{2^{9}C_{\theta}^{2}}{\eta\delta^{2}}(\varepsilon_{j}+\eta
_{j})\varepsilon_{j}\int_{B_{1}(0)}\!\!\!\!\!\!\!\!J(z)|z|^{2}dz
\int_{(P_{\rho})_{\delta/4}}\!\!\!\!\!\!\!\!|\nabla u_{j}(y)|^{2}dy\label{E21}\\
&  \quad+\frac{2^{9}C_{\theta}^{2}}{\eta\delta^{2}}(\varepsilon_{j}+\eta
_{j})\varepsilon_{j}\int_{B_{1}(0)}\!\!\!\!\!\!\!\!J(z)|z|^{2}dz
\int_{(P_{\rho})_{\delta/4}}\!\!\!\!\!\!\!\!|\nabla\widetilde{u}_{j}(y)|^{2}dy\nonumber\\
&  \leq\frac{2^{9}C_{\theta}^{2}M_{J}c_{J,W}^{(n)}M}{\eta\delta^{2}%
}(\varepsilon_{j}+\eta_{j})+\frac{2^{9}C_{\theta}^{2}C_{\theta,P}M_{J}}{\eta\delta^{2}%
}(\varepsilon_{j}+\eta_{j})~.\nonumber
\end{align}

By (\ref{G j}) and (\ref{Ennio})%
\begin{align}
&  \frac{2^{9}C_{\theta}^{2}}{\eta\delta^{2}}\frac{\varepsilon_{j}+\eta_{j}%
}{\varepsilon_{j}}\int_{S_{j}}\Big(\int_{P_{\rho}\setminus B_{\varepsilon_{j}%
}\!(y)}\!\!\!\!\!\!\!\!\!\!\!\!\!\!\!\!\!\!J_{\varepsilon_{j}}(x-y)(u_{j}%
(x)-\widetilde{u}_{j}(x))^{2}dx\Big)dy\label{E21a}\\
&  \leq\frac{2^{9}C_{\theta}^{2}K}{\eta\delta^{2}}(\varepsilon_{j}+\eta
_{j})~.\nonumber
\end{align}

Using (\ref{J1}), (\ref{G j}), and (\ref{Ennio}) we obtain%
\begin{align}
&  \frac{2^{9}C_{\theta}^{2}}{\eta\delta^{2}}\frac{\varepsilon_{j}+\eta_{j}%
}{\varepsilon_{j}}\int_{S_{j}}\Big(\int_{P_{\rho}\setminus B_{\varepsilon_{j}%
}(y)}J_{\varepsilon_{j}}(x-y)dx\Big)(u_{j}(y)-\widetilde{u}_{j}(y))^{2}%
dy\nonumber\\
&  \leq\frac{2^{9}C_{\theta}^{2}M_{J}}{\eta\delta^{2}}\frac{\varepsilon
_{j}+\eta_{j}}{\varepsilon_{j}}\int_{S_{j}}(u_{j}(y)-\widetilde{u}_{j}%
(y))^{2}dy\leq\frac{2^{9}C_{\theta}^{2}M_{J}K}{\eta\delta^{2}}(\varepsilon
_{j}+\eta_{j})~. \label{E22}%
\end{align}

Combining (\ref{E16}), (\ref{E5}), (\ref{E18}), (\ref{E18a}), (\ref{E19}),
(\ref{E20}), (\ref{E21}), (\ref{E21a}), and (\ref{E22}), we have
\begin{equation}
\mathcal{J}_{\varepsilon_{j}}(v_{j},S_{j})+\mathcal{J}_{\varepsilon_{j}%
}(\widetilde{u}_{j},B_{j})\leq\frac{\mathcal{J}_{\varepsilon_{j}}(u_{j}%
,S_{j})}{(1-\eta)^{2}}+\frac{4C_{P_{\rho}}C_{\theta}^{2}M_{J}}{(1-\eta)^{2}%
}\delta+\sigma_{j}^{(1)}~, \label{E23}%
\end{equation}
where $\sigma_{j}^{(1)}\rightarrow0^{+}$ as $j\rightarrow+\infty$.

Next we consider the term $\mathcal{J}_{\varepsilon_{j}}(v_{j},S_{j},A_{j}\cup
B_{j})$ in (\ref{E9}) . By (\ref{E15}), using (\ref{cut-offc}),%
\begin{align}
\mathcal{J}_{\varepsilon_{j}}  &  (v_{j},S_{j},A_{j}\cup B_{j})\leq
\frac{\mathcal{J}_{\varepsilon_{j}}(u_{j},S_{j},A_{j})}{(1-\eta)^{2}}%
+\frac{\mathcal{J}_{\varepsilon_{j}}(\widetilde{u}_{j},S_{j},B_{j})}%
{(1-\eta)^{2}}\nonumber\\
&  +\frac{2\varepsilon_{j}}{\eta}\int_{S_{j}}\Big(\int_{A_{j}\cup B_{j}%
}J_{\varepsilon_{j}}(x-y)(\varphi_{j}(x)-\varphi_{j}(y))^{2}dx\Big)|\nabla
u_{j}(y)-\nabla\widetilde{u}_{j}(y)|^{2}dy\nonumber\\
&  +\frac{\varepsilon_{j}}{\eta}\int_{S_{j}}\int_{A_{j}\cup B_{j}%
}\!J_{\varepsilon_{j}}(x\!-\!y)(u_{j}(y)\!-\!\widetilde{u}_{j}(y))^{2}%
|\nabla\varphi_{j}(y)|^{2}dxdy~. \label{E24}%
\end{align}
 Since $\eta<1/2$, by (\ref{G j}) and (\ref{Ennio}) we have
\begin{equation}
\frac{\mathcal{J}_{\varepsilon_{j}}(u_{j},S_{j},A_{j})}{(1-\eta)^{2}}\leq
4\mathcal{J}_{\varepsilon_{j}}(u_{j},S_{j},A_{j})\leq
4K\sqrt{\varepsilon_{j}}+ 4\sqrt{\eta_{j}}~.
\label{545bis}
\end{equation}
The second and third terms on the right-hand side of (\ref{E24}) can be
estimated using (\ref{E5}) and (\ref{E18}). For the last term, we use the fact
that $\nabla\varphi_{j}(x)=0$ if $x\in A_{j}\cup B_{j}$. Hence, by a change of
variables, from (\ref{J1}), (\ref{G j}), (\ref{Ennio}), (\ref{deriphi}) and from the inequalities $\delta\leq1$, $\varepsilon_{j}\leq1$, and $\eta_{j}%
\leq1$, we obtain%
\begin{align}
&  \frac{\varepsilon_{j}}{\eta}\int_{S_{j}}\int_{A_{j}\cup B_{j}%
}\!J_{\varepsilon_{j}}(x\!-\!y)(u_{j}(y)\!-\!\widetilde{u}_{j}(y))^{2}%
|\nabla\varphi_{j}(y)|^{2}dxdy\nonumber\\
&  \leq\frac{\varepsilon_{j}}{\eta}\int_{S_{j}}\int_{B_{\varepsilon_{j}}%
\!(y)}\!J_{\varepsilon_{j}}(x\!-\!y)(u_{j}(y)\!-\!\widetilde{u}_{j}%
(y))^{2}|\nabla\varphi_{j}(y)-\nabla\varphi_{j}(x)|^{2}dxdy\nonumber\\
&  +\frac{\varepsilon_{j}}{\eta}\int_{S_{j}}\int_{P_{\rho}\setminus
B_{\varepsilon_{j}}\!(y)}\!J_{\varepsilon_{j}}(x\!-\!y)(u_{j}%
(y)\!-\!\widetilde{u}_{j}(y))^{2}|\nabla\varphi_{j}(y)|^{2}dxdy\nonumber\\
&  \leq2^{14}\frac{C_{\theta}^{2}}{\eta\delta^{4}}\frac{(\varepsilon_{j}%
+\eta_{j})^{2}}{\varepsilon_{j}^{3}}\int_{S_{j}}\Big(\int_{B_{\varepsilon_{j}%
}\!(y)}\!J_{\varepsilon_{j}}(x\!-\!y)|x-y|^{2}dx\Big)(u_{j}%
(y)\!-\!\widetilde{u}_{j}(y))^{2}dy\nonumber\\
&  +\frac{2^{7}C_{\theta}^{2}}{\eta\delta^{2}}\frac{\varepsilon_{j}+\eta_{j}%
}{\varepsilon_{j}}\int_{S_{j}}\Big(\int_{P_{\rho}\setminus B_{\varepsilon_{j}%
}\!(y)}\!J_{\varepsilon_{j}}(x\!-\!y)~dx\Big)(u_{j}(y)\!-\!\widetilde{u}%
_{j}(y))^{2}dxdy\label{E25}\\
&  \leq2^{14}\frac{C_{\theta}^{2}M_{J}}{\eta\delta^{4}}\frac{\varepsilon
_{j}\!+\!\eta_{j}}{\varepsilon_{j}}\!\!\int_{S_{j}}\!\!(u_{j}%
(y)\!-\!\widetilde{u}_{j}(y))^{2}\!dy\leq2^{14}\frac{C_{\theta}^{2}M_{J}%
K}{\eta\delta^{4}}(\varepsilon_{j}\!+\!\eta_{j})~.\nonumber
\end{align}
Therefore, by (\ref{E5}), (\ref{E18}), (\ref{E24}), (\ref{545bis}), and (\ref{E25}) we get%
\begin{equation}
\mathcal{J}_{\varepsilon_{j}}(v_{j},S_{j},A_{j}\cup B_{j})\leq
\frac{4C_{P_{\rho}}C_{\theta}^{2}M_{J}}{(1-\eta)^{2}}\delta
+\sigma_{j}^{(2)}~,\!\!\! \label{E26}%
\end{equation}
where $\sigma_{j}^{(2)}\rightarrow0^{+}$ as $j\rightarrow+\infty$.

We now estimate the term $\mathcal{J}_{\varepsilon_{j}}(v_{j},A_{j},B_{j})$ in
(\ref{E9}). Since $v_{j}=u_{j}$ in $A_{j}$, $v_{j}=\widetilde{u}_{j}=1$ in
$B_{j}$, and $\text{dist}(A_{j},B_{j})=\frac{\delta}{2m_{j}}$, by a change of
variables and in view of (\ref{G1}), (\ref{F65}), and Lemmas
\ref{F53} and \ref{lemma separated}, for $j$ large enough we obtain
\begin{align}
\mathcal{J}_{\varepsilon_{j}}  &  (v_{j},A_{j},B_{j})
\leq2\omega_{1}\bigl(2\frac{m_{j}\varepsilon_{j}}{\delta}\bigr)\bigg(\varepsilon_{j}%
\int_{B_{j}}|\nabla\widetilde{u}_{j}(x)|^{2}dx+\varepsilon_{j}\int_{A_{j}%
}|\nabla u_{j}(y)|^{2}dy\bigg)\nonumber\\
&  \leq2\omega_{1}\Big(4\frac{\sqrt{\varepsilon_{j}}+ \sqrt{\eta_{j}}}{\delta}
\Big)(C_{\theta,P}\!+\!c_{J,W}^{(n)}M)~. \label{G2}%
\end{align}

Combining (\ref{E9}), (\ref{E5}), (\ref{E23}), (\ref{E26}), and (\ref{G2}) we
deduce
\begin{equation}
\mathcal{J}_{\varepsilon_{j}}(v_{j},P_{\rho})\leq\frac{\mathcal{J}%
_{\varepsilon_{j}}(u_{j},P_{\rho})}{(1-\eta)^{2}}+\frac{12 C_{P_{\rho}}C_{\theta}%
^{2}M_{J}}{(1-\eta)^{2}}\delta
+\sigma_{j}^{(3)}~, \label{G33}%
\end{equation}
where $\sigma_{j}^{(3)}\rightarrow0^{+}$ as $j\rightarrow+\infty$.

Next we consider the term $\mathcal{W}_{\varepsilon_{j}}(v_{j},P_{\rho})$. Fix
$x\in S_{j}$ with $x\cdot\nu>\varepsilon_{j}$, so that $\widetilde{u}%
_{j}(x)=1$. By (\ref{W3}) and (\ref{W3b}) we have $W(v_{j}(x))\leq
W(u_{j}(x))$ if $u_{j}(x)\geq1-a_{W}$. Let $s_{0}<-1$ be such that
\begin{equation}
W(s_{0})=\max_{[-1,1]}W=:M_{W}~. \label{G5}%
\end{equation}
If $u_{j}(x)\leq s_{0}$, then either $u_{j}(x)\leq v_{j}(x)\leq-1$ or $-1\leq
v_{j}(x)\leq1$. In both cases we get $W(v_{j}(x))\leq W(u_{j}(x))$, either by
(\ref{W3b}) or by (\ref{G5}). If $s_{0}<u_{j}(x)<1-a_{W}$, then $s_{0}%
<v_{j}(x)<1$ and we have
\[
W(v_{j}(x))\leq W(s_{0})=M_{W}%
\]
by (\ref{W3b}) and (\ref{G5}). We conclude that
\[
W(v_{j}(x))\leq W(u_{j}(x))+M_{W}%
\]
for every $x\in S_{j}$ with $x\cdot\nu>\varepsilon_{j}$. Integrating we
obtain
\begin{align}
\frac{1}{\varepsilon_{j}}  &  \int_{S_{j}\cap\{x\cdot\nu>\varepsilon_{j}%
\}}W(v_{j}(x))~dx\leq\frac{1}{\varepsilon_{j}}\int_{S_{j}\cap\{x\cdot
\nu>\sigma_{j}\}}W(u_{j}(x))~dx\nonumber\\
&  \ +\frac{M_{W}}{\varepsilon_{j}}\mathcal{L}^{n}(S_{j}\cap\{|u_{j}%
-1|>a_{W}\}\cap\{x\cdot\nu>\varepsilon_{j}\})\nonumber\\
&  {}\leq\frac{1}{\varepsilon_{j}}\int_{S_{j}\cap\{x\cdot\nu>\varepsilon
_{j}\}}W(u_{j}(x))~dx+\frac{M_{W}}{\varepsilon_{j}a_{W}^{2}}\int_{S_{j}%
\cap\{x\cdot\nu>\varepsilon_{j}\}}(u_{j}(x)-1)^{2}dx\nonumber
\end{align}
A similar inequality can be obtained for $S_{j}\cap\{x\cdot\nu<-\varepsilon
_{j}\}$, and adding these two inequalities we conclude that
\begin{align}
\frac{1}{\varepsilon_{j}}\int_{S_{j}\setminus P_{\varepsilon_{j}}}%
W(v_{j}(x))~dx  &  \leq\frac{1}{\varepsilon_{j}}\int_{S_{j}\setminus
P_{\varepsilon_{j}}}W(u_{j}(x))~dx\nonumber\\
&  \quad+\frac{M_{W}}{a_{W}^{2}}\frac{1}{\varepsilon_{j}}\int_{S_{j}\setminus
P_{\varepsilon_{j}}}(u_{j}(x)-\widetilde{u}_{j}(x))^{2}dx~, \label{F62}%
\end{align}
where in the last inequality we used the fact that $\widetilde{u}_{j}=w^{\nu}$
on $P_{\rho}\setminus P_{\varepsilon_{j}}$.

On the other hand, since $W(v_{j}(x))\leq W(u_{j}(x))+M_{W}$ for every $x\in
P_{\rho}$, integrating over $S_{j}\cap P_{\varepsilon_{j}}$ and using
(\ref{F77}), we obtain
\begin{align}
\frac{1}{\varepsilon_{j}}  &  \int_{S_{j}\cap P_{\varepsilon_{j}}}%
W(v_{j}(x))~dx\leq\frac{1}{\varepsilon_{j}}\int_{S_{j}\cap P_{\varepsilon_{j}%
}}W(u_{j}(x))~dx+\frac{M_{W}}{\varepsilon_{j}}\mathcal{L}^{n}(S_{j}\cap
P_{\varepsilon_{j}})\nonumber\\
&  {}\leq\frac{1}{\varepsilon_{j}}\int_{S_{j}\cap P_{\varepsilon_{j}}}%
W(u_{j}(x))~dx+C_{P_{\rho}}M_{W}\delta\sqrt{\varepsilon_{j}}~. \label{F63}%
\end{align}
Adding (\ref{F62}) and (\ref{F63}) gives
\begin{align}
\frac{1}{\varepsilon_{j}}\int_{S_{j}}  &  W(v_{j}(x))~dx\leq\frac
{1}{\varepsilon_{j}}\int_{S_{j}}W(u_{j}(x))~dx\nonumber\\
&  {}+\frac{M_{W}}{a_{W}^{2}}\frac{1}{\varepsilon_{j}}\int_{S_{j}}%
(u_{j}(x)-\widetilde{u}_{j}(x))^{2}dx+C_{P_{\rho}}M_{W}\delta\sqrt
{\varepsilon_{j}}~,\nonumber
\end{align}
hence by (\ref{G j}) and (\ref{Ennio}) we have
\begin{align}
\frac{1}{\varepsilon_{j}}\int_{S_{j}}  &  W(v_{j}(x))~dx\leq\frac
{1}{\varepsilon_{j}}\int_{S_{j}}W(u_{j}(x))~dx\nonumber\\
&  {}+\frac{M_{W}}{a_{W}^{2}}(K\sqrt{\varepsilon_{j}%
}+\sqrt{\eta_{j}})+C_{P_{\rho}}M_{W}\delta\sqrt{\varepsilon_{j}}~.
\label{F19}%
\end{align}

By (\ref{F75}), (\ref{admonbou}), (\ref{E4a}), and (\ref{G5}) we get
\begin{align}
\frac{1}{\varepsilon_{j}}\int_{B_{j}}  &  W(v_{j}(x))~dx=\frac{1}%
{\varepsilon_{j}}\int_{B_{j}}W(\widetilde{u}_{j}(x))~dx\nonumber\\
&  \leq\frac{M_{W}}{\varepsilon_{j}}\mathcal{L}^{n}(B_{j}\cap P_{\varepsilon
_{j}})\leq C_{P_{\rho}}M_{W}\delta~. \label{G6}%
\end{align}

From (\ref{F19}) and (\ref{G6}) it follows that
\begin{equation}
\frac{1}{\varepsilon_{j}}\int_{P_{\rho}}W(v_{j}(x))~dx\leq\frac{1}%
{\varepsilon_{j}}\int_{P_{\rho}}W(u(x))~dx+C_{P_{\rho}}M_{W}\delta+\sigma
_{j}^{(4)}~, \label{G12}%
\end{equation}
where $\sigma_{j}^{(4)}\rightarrow0^{+}$ as $j\rightarrow+\infty$.

Adding (\ref{G33}) and (\ref{G12}) we obtain
\[
\mathcal{F}_{\varepsilon_{j}}(v_{j},P_{\rho})\leq\frac{\mathcal{F}%
_{\varepsilon_{j}}(u_{j},P_{\rho})}{(1-\eta)^{2}}+
C_{P_{\rho}}(48\,C_{\theta
}^{2}M_{J}+M_{W})\delta+\sigma_{j}^{(5)}%
\]
where $\sigma_{j}^{(5)}\rightarrow0^{+}$ as $j\rightarrow+\infty$. This
implies that
\[
\limsup_{j\rightarrow+\infty}\mathcal{F}_{\varepsilon_{j}}(v_{j},P_{\rho}%
)\leq\frac{1}{(1-\eta)^{2}}\limsup_{j\rightarrow+\infty}\mathcal{F}%
_{\varepsilon_{j}}(u_{j},P_{\rho})+\kappa_{1}\delta~,
\]
where $\kappa_{1}$ is a constant independent of $j$, $\delta$,
and $P_{\rho}$. Passing to the limit as $\eta\rightarrow0^{+}$ we obtain (\ref{G7}).
\end{proof}

\section{Gamma Liminf Inequality}

\label{section gammaliminf}

In this section we prove the $\Gamma$-liminf inequality.

\begin{theorem}
[$\Gamma$-Liminf]\label{theorem liminf}Let $\varepsilon_{j}\rightarrow0^{+}$
and let $\{u_{j}\}$ be a sequence in $W_{\operatorname*{loc}}^{1,2}%
(\Omega)\cap L^{2}(\Omega)$ such that $u_{j}\rightarrow u$ in $L^{2}(\Omega)$
and
\begin{equation}
\liminf_{j\rightarrow+\infty}\mathcal{F}_{\varepsilon_{j}}(u_{j}%
,\Omega)<+\infty~. \label{DM2}
\end{equation}
Then $u\in BV(\Omega;\{-1,1\})$ and%
\begin{equation}
\liminf_{j\rightarrow+\infty}\mathcal{F}_{\varepsilon_{j}}(u_{j},\Omega
)\geq\int_{S_{u}}\psi(\nu_{u})~d\mathcal{H}^{n-1}~, \label{liminf inequality}%
\end{equation}
where $\psi$ is defined by (\ref{psi}).
\end{theorem}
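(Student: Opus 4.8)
The plan is to run the blow-up argument of Fonseca--Tartar, using the Modification Theorem \ref{modification} to pass from the localized energy to the cell formula \eqref{psi}.

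\textbf{Step 1: reduction to a density estimate.} I would pass to a subsequence (not relabeled) along which $\mathcal{F}_{\varepsilon_j}(u_j,\Omega)$ converges to the left-hand side of \eqref{DM2}; since $\int_\Omega W(u_j)\le\varepsilon_j\,\mathcal{F}_{\varepsilon_j}(u_j,\Omega)\to0$, conditions \eqref{W1}--\eqref{W2} together with the argument in the proof of Theorem \ref{theorem compactness n>1} force $u\in BV(\Omega;\{-1,1\})$, and we may assume $\sup_j\mathcal{F}_{\varepsilon_j}(u_j,\Omega)<+\infty$. Next I would introduce the finite Radon measures
\[
\mu_j(E):=\mathcal{W}_{\varepsilon_j}(u_j,E)+\varepsilon_j\int_E\Big(\int_\Omega J_{\varepsilon_j}(x-y)\,|\nabla u_j(x)-\nabla u_j(y)|^2\,dy\Big)dx ,
\]
which satisfy $\mu_j(\Omega)=\mathcal{F}_{\varepsilon_j}(u_j,\Omega)$ and $\mu_j(A)\ge\mathcal{F}_{\varepsilon_j}(u_j,A)$ for every open $A\subset\Omega$, and extract a weak-$*$ limit $\mu_j\overset{*}{\rightharpoonup}\mu$ along a further subsequence. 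Then $\liminf_j\mathcal{F}_{\varepsilon_j}(u_j,\Omega)\ge\mu(\Omega)$, so, since $S_u=\partial^*\{u=1\}$ is $(n-1)$-rectifiable and $\mathcal{H}^{n-1}(S_u\cap Q_r^{\nu_u(x_0)}(x_0))/r^{n-1}\to1$ at $\mathcal{H}^{n-1}$-a.e.\ $x_0\in S_u$, by the Besicovitch differentiation theorem it suffices to prove that
\[
\liminf_{r\to0^+}\frac{\mu\big(Q_r^{\nu_u(x_0)}(x_0)\big)}{r^{\,n-1}}\ \ge\ \psi(\nu_u(x_0))\qquad\text{for }\mathcal{H}^{n-1}\text{-a.e. }x_0\in S_u ,
\]
where $Q_r^{\nu}(x_0)$ is the open cube of side $r$ centred at $x_0$ with two faces orthogonal to $\nu$; indeed this gives $\mu(B)\ge\int_{B\cap S_u}\psi(\nu_u)\,d\mathcal{H}^{n-1}$ for every Borel $B$, hence $\liminf_j\mathcal{F}_{\varepsilon_j}(u_j,\Omega)\ge\mu(\Omega)\ge\int_{S_u}\psi(\nu_u)\,d\mathcal{H}^{n-1}$, which is \eqref{liminf inequality}.

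\textbf{Step 2: blow-up and scaling.} Fix such an $x_0$; after a translation and a rotation assume $x_0=0$ and $\nu_u(x_0)=e_n$, so that $u(r\,\cdot)\to w:=w^{e_n}$ in $L^1(Q^{e_n})$ and, since $u^2\equiv1$, also in $L^2(Q^{e_n})$, as $r\to0^+$. A change of variables yields the scaling identity $\mathcal{F}_\varepsilon(v,Q_r(0))=r^{\,n-1}\mathcal{F}_{\varepsilon/r}(v(r\,\cdot),Q^{e_n})$. Writing $\ell:=\liminf_{r\to0^+}\mu(Q_r(0))/r^{\,n-1}$, I would pick $r_k\downarrow0$ with $\mu(\partial Q_{r_k}(0))=0$ along which $\mu(Q_{r_k}(0))/r_k^{\,n-1}\to\ell$, and then $j_k\to+\infty$ so large that, with $\tilde\varepsilon_k:=\varepsilon_{j_k}/r_k$ and $w_k(z):=u_{j_k}(r_kz)$, one has $\tilde\varepsilon_k\to0^+$, $w_k\to w$ in $L^2(Q^{e_n})$ (using $\|u_{j_k}-u\|_{L^2(\Omega)}\le r_k^{n/2}/k$), and $\mathcal{F}_{\tilde\varepsilon_k}(w_k,Q^{e_n})\le\mu(Q_{r_k}(0))/r_k^{\,n-1}+1/k\to\ell$, the last estimate following from the scaling identity, $\mathcal{F}_{\varepsilon_{j_k}}(u_{j_k},Q_{r_k}(0))\le\mu_{j_k}(Q_{r_k}(0))$, and $\mu_{j_k}(Q_{r_k}(0))\le\mu(Q_{r_k}(0))+r_k^{\,n-1}/k$ (valid for $j_k$ large since $\mu_j(Q_{r_k}(0))\to\mu(Q_{r_k}(0))$).

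\textbf{Step 3: recovering the cell problem.} Since $Q^{e_n}$ coincides with $P_1$ for $P$ the unit $(n-1)$-cube in $e_n^\perp$, I would apply Theorem \ref{modification} to the sequence $\{w_k\}$ (with parameters $\{\tilde\varepsilon_k\}$) on $P_1$: for each $\delta\in(0,\tfrac12)$ small one obtains $v_k$ with $v_k=w_k$ on $(P_1)_{2\delta}$, $v_k=w\ast\theta_{\tilde\varepsilon_k}$ on $P_1\setminus(P_1)_\delta$, and $\limsup_k\mathcal{F}_{\tilde\varepsilon_k}(v_k,P_1)\le\limsup_k\mathcal{F}_{\tilde\varepsilon_k}(w_k,P_1)+\kappa_1\delta$. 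Because $w\ast\theta_{\tilde\varepsilon_k}$ is independent of the tangential variables and equals $\pm1$ where $\pm z\cdot e_n>\tilde\varepsilon_k$, the function $v_k$ extends to some $\hat v_k\in X^{e_n}$ by $P$-periodicity in the tangential directions and by $\pm1$ where $\pm z\cdot e_n\ge\tfrac12$, and (thanks to the collar where $v_k=w\ast\theta_{\tilde\varepsilon_k}$) $\nabla\hat v_k$ vanishes outside $\{|z\cdot e_n|<\tfrac12-\delta\}$ within each tangential period. This extension leaves $\int_{Q^{e_n}}W$ unchanged, and the difference $\mathcal{F}^{e_n}_{\tilde\varepsilon_k}(\hat v_k)-\mathcal{F}_{\tilde\varepsilon_k}(v_k,P_1)$ is a sum of nonnegative interaction integrals of $J_{\tilde\varepsilon_k}(x-y)|\nabla\hat v_k(x)-\nabla\hat v_k(y)|^2$ in which either $|x-y|\gtrsim\delta$ (negligible by \eqref{J1}) or $\hat v_k$ coincides with the mollified profile $w\ast\theta_{\tilde\varepsilon_k}$ near a tangential face, whose first and second derivatives are controlled by \eqref{mollifier support}--\eqref{derivatives convolution}, while $\int_{P_1}|\nabla v_k|^2=O(\tilde\varepsilon_k^{-1})$ comes from the interpolation Lemma \ref{F53}; altogether this difference is bounded by a quantity that vanishes as $k\to\infty$ for fixed $\delta$ plus a term $O(\delta)$. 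Since $\tilde\varepsilon_k<1$ for $k$ large, $(\tilde\varepsilon_k,\hat v_k)$ is admissible in \eqref{psi}, so $\psi(e_n)\le\mathcal{F}^{e_n}_{\tilde\varepsilon_k}(\hat v_k)$; taking $\limsup_k$ and combining with Theorem \ref{modification} and Step 2 gives $\psi(e_n)\le\ell+(\kappa_1+C)\delta$, and letting $\delta\to0^+$ yields $\psi(e_n)\le\ell$, which is the desired density estimate. In dimension $n=1$ the set $S_u$ is finite and the same argument is carried out at each jump point, with $Q^{e_n}$ replaced by $(-\tfrac12,\tfrac12)$.

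\textbf{Main difficulty.} The heart of the matter is the last inequality in Step 3: the Modification Theorem is phrased for the \emph{standard} localized energy $\mathcal{F}_\varepsilon(\cdot,P_\rho)$, whereas the cell energy $\mathcal{F}^{\nu}_\varepsilon$ underlying $\psi$ carries its nonlocal term over the \emph{unbounded} region $V^{\nu}\times\mathbb{R}^n$. One must verify that the tangentially-periodic, tail-constant extension genuinely lies in $X^{\nu}$ and, crucially, that periodization produces no extra nonlocal energy beyond a vanishing error plus an $O(\delta)$ term; this is exactly where the moment condition \eqref{J1} (to discard far-range interactions) and the mollifier bounds \eqref{mollifier support}--\eqref{derivatives convolution} (to handle the glued profile near the tangential faces) are indispensable, together with the uniform gradient control of Lemma \ref{F53}. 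A secondary technical burden is organizing the three limits $j\to\infty$, $r\to0^+$, $\delta\to0^+$ through a single diagonal subsequence, which relies on the uniform boundedness of the measures $\mu_j$.
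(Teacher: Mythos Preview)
Your proposal is correct and follows essentially the same approach as the paper's proof: localize the energy as a measure, blow up at a generic point of $S_u$, rescale to the unit cube, apply the Modification Theorem, extend periodically to $X^\nu$, and control the periodization error. The ``main difficulty'' you flag is precisely Lemma~\ref{lemma extra terms} in the paper, whose proof uses exactly the ingredients you list (far-range control via \eqref{J1} through Lemma~\ref{lemma separated}, the mollifier estimates \eqref{mollifier support}--\eqref{derivatives convolution} through Lemma~\ref{lemma S S}, and the gradient bound of Lemma~\ref{F53}).
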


Given $\nu\in\mathbb{S}^{n-1}$, let $\nu_{1}$, \ldots, $\nu_{n}$ be an
orthonormal basis in $\mathbb{R}^{n}$ with $\nu_{n}=\nu$, let
\begin{equation}
Q_{\rho}^{\nu}:=\{x\in\mathbb{R}^{n}:~|x\cdot\nu_{i}|<\rho/2~,~i=1,\ldots
,n\}~,\quad\hat{Q}_{\rho}^{\nu}:=\mathbb{R}^{n}\setminus Q_{\rho}^{\nu},
\label{Q ni rho}%
\end{equation}
and let
\[
S_{\rho}^{\nu}:=\{x\in\mathbb{R}^{n}:~|x\cdot\nu|<\rho/2\}~,\quad\hat{S}%
_{\rho}^{\nu}:=\mathbb{R}^{n}\setminus S_{\rho}^{\nu}~.
\]
When $\nu_{1}$, \ldots, $\nu_{n}$ is the canonical basis $e_{1}$, \ldots,
$e_{n}$ in $\mathbb{R}^{n}$ we omit the superscript $\nu$ in the above notation.

We recall the definition of the sets $V^{\nu}$ and $X^{\nu}$ in (\ref{V ni})
and in (\ref{X ni}), respectively. We will use these sets in what follows.
Further, as in\ Section \ref{section modification}, $\theta_{\varepsilon}$ is
the standard mollifier (see (\ref{standard mollifier})), and we set
\begin{equation}
\tilde{u}_{\varepsilon}:=w^{\nu}\ast\theta_{\varepsilon}~, \label{u tilde}%
\end{equation}
where $w^{\nu}$ is the function defined in (\ref{u nu}), with $\nu
\in\mathbb{S}^{n-1}$.

\begin{lemma}
\label{lemma S S}Let $0<\varepsilon<\delta<1/3$, let $C_{\delta
}:=Q_{1+\delta}\setminus Q_{1-\delta}$, and let $\tilde{u}_{\varepsilon}$ be
the function in (\ref{u tilde}), with $\nu=e_{n}$. Then
\[
\mathcal{J}_{\varepsilon}(\tilde{u}_{\varepsilon},C_{\delta})\leq\kappa
_{2}\delta
\]
for some constant $\kappa_{2}>0$ independent of $\varepsilon$ and $\delta$.
\end{lemma}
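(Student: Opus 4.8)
The plan is to use that $\nabla\tilde{u}_\varepsilon$ vanishes outside the slab $\{|x\cdot e_n|\le\varepsilon\}$: subadditivity localizes the double integral to this slab, the inner integral is then controlled uniformly by Lemma~\ref{lemma g} exactly as in the proof of Theorem~\ref{modification}, and the hypothesis $\varepsilon<\delta<1/3$ forces the slab to meet $C_\delta$ only in a set of measure $O(\varepsilon\delta)$, which supplies the factor $\delta$.

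To begin, set $S:=\{x\in\mathbb{R}^n:|x\cdot e_n|\le\varepsilon\}$ and $\Phi(x,y):=J_\varepsilon(x-y)|\nabla\tilde{u}_\varepsilon(x)-\nabla\tilde{u}_\varepsilon(y)|^2$. By (\ref{mollifier support}), $\nabla\tilde{u}_\varepsilon$ vanishes outside $S$, hence $\Phi(x,y)=0$ unless $x\in S$ or $y\in S$; restricting the domain of integration to $(C_\delta\times C_\delta)\cap(\{x\in S\}\cup\{y\in S\})$, using subadditivity of the integral over this union, the symmetry of $\Phi$ (recall $J$ is even), and Fubini's theorem, I obtain
\[
\mathcal{J}_\varepsilon(\tilde{u}_\varepsilon,C_\delta)=\varepsilon\int_{C_\delta}\int_{C_\delta}\Phi(x,y)\,dx\,dy\le 2\varepsilon\int_{C_\delta\cap S}\Big(\int_{C_\delta}\Phi(x,y)\,dx\Big)\,dy .
\]
For the inner integral I would fix $y$ and apply Lemma~\ref{lemma g} to $g(x):=|\nabla\tilde{u}_\varepsilon(x)-\nabla\tilde{u}_\varepsilon(y)|^2$: by the mean value theorem together with the bounds in (\ref{derivatives convolution}) one has $0\le g(x)\le (a|x-y|)^2\wedge b^2$ for every $x$, with $a=C_\theta/\varepsilon^2$ and $b=2C_\theta/\varepsilon$, so Lemma~\ref{lemma g} (with $A=\mathbb{R}^n\supset C_\delta$) yields $\int_{C_\delta}\Phi(x,y)\,dx\le M_J\big((\varepsilon a)\vee b\big)^2=4C_\theta^2M_J/\varepsilon^2$ uniformly in $y$. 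Therefore
\[
\mathcal{J}_\varepsilon(\tilde{u}_\varepsilon,C_\delta)\le \frac{8C_\theta^2M_J}{\varepsilon}\,\mathcal{L}^n(C_\delta\cap S).
\]

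Finally, since $\varepsilon<\delta<1/3$ we have $\varepsilon<(1-\delta)/2$, so on $S$ the $e_n$-coordinate already satisfies the constraint defining $Q_{1-\delta}$; hence every $x\in C_\delta\cap S$ has $|x\cdot e_n|\le\varepsilon$ while $\max_{1\le i\le n-1}|x\cdot e_i|\in[(1-\delta)/2,(1+\delta)/2)$, so $C_\delta\cap S$ is contained in the product of an interval of length $2\varepsilon$ in the $e_n$-direction with the $(n-1)$-dimensional ``frame'' $\{z:\max_i|z_i|<(1+\delta)/2\}\setminus\{z:\max_i|z_i|<(1-\delta)/2\}$ in the remaining variables, whose measure is $(1+\delta)^{n-1}-(1-\delta)^{n-1}\le c_n\delta$ for $0<\delta<1/3$ (mean value theorem for $t\mapsto t^{n-1}$). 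Thus $\mathcal{L}^n(C_\delta\cap S)\le 2c_n\varepsilon\delta$, and substituting gives $\mathcal{J}_\varepsilon(\tilde{u}_\varepsilon,C_\delta)\le 16c_nC_\theta^2M_J\,\delta=:\kappa_2\delta$, with $\kappa_2$ depending only on $n$, $\theta$, and $J$. The only point that really needs attention is the bookkeeping of the powers of $\varepsilon$ — the $1/\varepsilon^2$ produced by Lemma~\ref{lemma g}, the prefactor $\varepsilon$, and the $\varepsilon$ gained from the thinness of the slab, which cancel exactly and leave the $\delta$ coming from the lateral frame; I do not anticipate a genuine obstacle beyond this.
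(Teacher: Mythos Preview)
Your proof is correct and rests on the same ingredients as the paper's: the support of $\nabla\tilde u_\varepsilon$ in the slab $\{|x_n|\le\varepsilon\}$, the pointwise bounds from (\ref{derivatives convolution}), and the estimate $\mathcal{L}^n(C_\delta\cap\{|x_n|\lesssim\varepsilon\})=O(\varepsilon\delta)$. The paper carries this out via a three-piece decomposition $C_\delta\times C_\delta\subset(C_\delta^{2\varepsilon}\times C_\delta^{2\varepsilon})\cup(C_\delta^{\varepsilon}\times\hat C_\delta^{2\varepsilon})\cup(\hat C_\delta^{2\varepsilon}\times C_\delta^{\varepsilon})\cup(\hat C_\delta^{\varepsilon}\times\hat C_\delta^{\varepsilon})$ and estimates each term by hand, whereas your symmetry reduction plus a single call to Lemma~\ref{lemma g} packages the same computation more cleanly.
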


\begin{proof}
For every $\sigma>0$ define $C_{\delta}^{\sigma}:=C_{\delta}\cap
\{|x_{n}|<\sigma\}$, $\hat{C}_{\delta}^{\sigma}:=C_{\delta}\cap\{|x_{n}%
|\geq\sigma\}$, and write%
\[
C_{\delta}\times C_{\delta}=(C_{\delta}^{2\varepsilon}\times C_{\delta
}^{2\varepsilon})\cup(C_{\delta}^{\varepsilon}\times\hat{C}_{\delta
}^{2\varepsilon})\cup(\hat{C}_{\delta}^{2\varepsilon}\times C_{\delta
}^{\varepsilon})\cup(\hat{C}_{\delta}^{\varepsilon}\times\hat{C}_{\delta
}^{\varepsilon})~.
\]
Since $J$ is even, we have
\begin{equation}
\mathcal{J}_{\varepsilon}(\tilde{u}_{\varepsilon},C_{\delta})\leq
\mathcal{J}_{\varepsilon}(\tilde{u}_{\varepsilon},C_{\delta}^{2\varepsilon
})+2\mathcal{J}_{\varepsilon}(\tilde{u}_{\varepsilon},C_{\delta}^{\varepsilon
},\hat{C}_{\delta}^{2\varepsilon})+\mathcal{J}_{\varepsilon}(\tilde
{u}_{\varepsilon},\hat{C}_{\delta}^{\varepsilon})~. \label{ls 1}%
\end{equation}
By (\ref{standard mollifier}) we have that $\nabla\tilde{u}_{\varepsilon}=0$
on $\hat{C}_{\delta}^{\varepsilon}$ and so%
\begin{equation}
\mathcal{J}_{\varepsilon}(\tilde{u}_{\varepsilon},\hat{C}_{\delta
}^{\varepsilon})=0~. \label{ls 2}%
\end{equation}
We now estimate the first term on the right-hand side of (\ref{ls 1}). Since
$\varepsilon\nabla\tilde{u}_{\varepsilon}$ and $\varepsilon^{2}\nabla
^{2}\tilde{u}_{\varepsilon}$ are bounded in $L^{\infty}$ uniformly with
respect to $\varepsilon$, there exists a constant $c>0$ such that%
\[
|\nabla\tilde{u}_{\varepsilon}(x)-\nabla\tilde{u}_{\varepsilon}(y)|^{2}%
\leq\frac{c}{\varepsilon^{2}}\Bigl(\Bigl\vert\frac{x-y}{\varepsilon
}\Bigr\vert\wedge\Bigl\vert\frac{x-y}{\varepsilon}\Bigr\vert^{2}\Bigr)
\]
for every $x$, $y\in\mathbb{R}^{n}$. Therefore, by the change of variables
$z=(x-y)/\varepsilon$ and (\ref{J1}) we get%
\begin{align}
\mathcal{J}_{\varepsilon}(\tilde{u}_{\varepsilon},C_{\delta}^{2\varepsilon})
&  \leq\frac{c}{\varepsilon}\int_{C_{\delta}^{2\varepsilon}}\int_{C_{\delta
}^{2\varepsilon}}J_{\varepsilon}(x-y)\Bigl(\Bigl\vert\frac{x-y}{\varepsilon
}\Bigr\vert\wedge\Bigl\vert\frac{x-y}{\varepsilon}\Bigr\vert^{2}%
\Bigr)~dxdy\label{ls 3}\\
&  \leq\frac{cM_{J}}{\varepsilon}\mathcal{L}^{n}(C_{\delta}^{2\varepsilon
})\leq2^{n+1}cM_{J}\delta~.\nonumber
\end{align}
Next we study the second term on the right-hand side of (\ref{ls 1}). Since
$\nabla\tilde{u}_{\varepsilon}=0$ on $\hat{C}_{\delta}^{2\varepsilon}$ and
$\varepsilon\nabla\tilde{u}_{\varepsilon}$ is bounded in $L^{\infty}$
uniformly with respect to $\varepsilon$, there exists a constant $c>0$ such
that
\begin{align}
\mathcal{J}_{\varepsilon}(\tilde{u}_{\varepsilon},C_{\delta}^{\varepsilon
},\hat{C}_{\delta}^{2\varepsilon})  &  =\varepsilon\int_{C_{\delta
}^{\varepsilon}}\Bigl(\int_{\hat{C}_{\delta}^{2\varepsilon}}J_{\varepsilon
}(x-y)dx\Bigr)|\nabla\tilde{u}_{\varepsilon}(y)|^{2}dy\label{ls 4}\\
&  \leq\frac{c}{\varepsilon}\mathcal{L}^{n}(C_{\delta}^{\varepsilon}%
)\int_{\mathbb{R}^{n}\setminus B_{1}(0)}J(z)~dz\leq2^{n}cM_{J}\delta
~,\nonumber
\end{align}
where we used again the change of variables $z=(x-y)/\varepsilon$ and
(\ref{J1}). The conclusion follows by combining (\ref{ls 1})--(\ref{ls 4}).
\end{proof}

The following result will be crucial in the proof of the $\Gamma$-liminf inequality.

\begin{lemma}
\label{lemma extra terms}Let $0<\varepsilon<\delta<1/3$, let $u\in
X^{\nu}$ be such $u=\tilde{u}_{\varepsilon}$ in $Q_{1}^{\nu}\setminus
Q_{1-\delta}^{\nu}$, where $\tilde{u}_{\varepsilon}$ is the function defined
in (\ref{u tilde}). Then there exist two constants $\kappa_{3}$ and
$\kappa_{4}$, depending only on the dimension $n$ of the space, such that%
\[
\mathcal{J}_{\varepsilon}(u,V^{\nu},\mathbb{R}^{n})-\mathcal{J}_{\varepsilon
}(u,Q_{1}^{\nu})\leq\kappa_{2}\delta+\Bigl(\kappa_{3}\omega_{1}\Bigl(\frac
{\varepsilon}{\delta}\Bigr)+\kappa_{4}\omega_{1}(\varepsilon)\Bigr)\varepsilon
\int_{Q_{1}^{\nu}}|\nabla u(x)|^{2}dx~,
\]
where $\kappa_{2}$ is the constant in Lemma \ref{lemma S S}, and $\omega_{1}$
is the function defined in (\ref{little omega}).
\end{lemma}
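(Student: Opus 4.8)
The plan is to expand $\mathcal{J}_{\varepsilon}(u,V^{\nu},\mathbb{R}^{n})-\mathcal{J}_{\varepsilon}(u,Q_{1}^{\nu})$ into ``cross'' interaction terms, to discard the terms supported on $\hat S_1^\nu:=\{x:|x\cdot\nu|\ge 1/2\}$ (where $u\equiv\pm1$, hence $\nabla u=0$, since $u\in X^\nu$), and to estimate the remaining ones by separating the ``uncontrolled'' cube $Q_{1-\delta}^\nu$ from the frame $C_\delta:=Q_1^\nu\setminus Q_{1-\delta}^\nu$, where $u=\tilde u_\varepsilon$. Concretely, set $R:=V^\nu\setminus Q_1^\nu=V^\nu\cap\hat S_1^\nu$ and $L:=S_1^\nu\setminus Q_1^\nu$, so that $V^\nu=Q_1^\nu\cup R$ and $\mathbb R^n=Q_1^\nu\cup L\cup\hat S_1^\nu$ disjointly up to null sets. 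By Fubini,
\[
\mathcal J_\varepsilon(u,V^\nu,\mathbb R^n)-\mathcal J_\varepsilon(u,Q_1^\nu)=\mathcal J_\varepsilon(u,Q_1^\nu,L)+\mathcal J_\varepsilon(u,Q_1^\nu,\hat S_1^\nu)+\mathcal J_\varepsilon(u,R,\mathbb R^n).
\]
Since $\nabla u=0$ on $\hat S_1^\nu\supset R$, in the last two terms $|\nabla u(x)-\nabla u(y)|^2$ collapses to $|\nabla u(x)|^2$, resp.\ $|\nabla u(y)|^2$; exploiting the $\nu_1,\dots,\nu_{n-1}$-periodicity of $|\nabla u|^2$ and the fact that the lattice translates of $R$ cover $\hat S_1^\nu$ (the translates of $V^\nu$ tile $\mathbb R^n$), both reduce to $\varepsilon\int_{Q_1^\nu}|\nabla u(z)|^2\Psi(z)\,dz$ with $\Psi(z):=\int_{\hat S_1^\nu}J_\varepsilon(z-w)\,dw$. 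Thus the quantity to bound is $\mathcal J_\varepsilon(u,Q_1^\nu,L)+2\varepsilon\int_{Q_1^\nu}|\nabla u|^2\Psi$.

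For the $\Psi$-term I would split $Q_1^\nu=Q_{1-\delta}^\nu\cup C_\delta$. On $Q_{1-\delta}^\nu$ one has $\operatorname*{dist}(\,\cdot\,,\hat S_1^\nu)\ge\delta/2$, so $\Psi\le\int_{|w|\ge\delta/2}J_\varepsilon\le\omega_1$ at an argument of order $\varepsilon/\delta$ (scale $w=\varepsilon z$ and bound $J(z)\le\tfrac{2\varepsilon}{\delta}J(z)|z|$ on $\{|z|\ge\delta/(2\varepsilon)\}$). On $C_\delta$, $\nabla u=\nabla\tilde u_\varepsilon$ vanishes off $\{|z\cdot\nu|\le\varepsilon\}$ by (\ref{mollifier support}), so there $\operatorname*{dist}(\,\cdot\,,\hat S_1^\nu)\ge 1/2-\varepsilon\ge 1/6$ and $\Psi\le\omega_1$ at an argument of order $\varepsilon$; moreover $\int_{C_\delta}|\nabla\tilde u_\varepsilon|^2=\int_{C_\delta}|\nabla u|^2\le\int_{Q_1^\nu}|\nabla u|^2$. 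This bounds the $\Psi$-term by $(\kappa_3\omega_1(\varepsilon/\delta)+\kappa_4\omega_1(\varepsilon))\,\varepsilon\int_{Q_1^\nu}|\nabla u|^2$ with $\kappa_3,\kappa_4$ dimensional.

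The term $\mathcal J_\varepsilon(u,Q_1^\nu,L)$ is the heart of the matter. Because $u$ is periodic in $\nu_1,\dots,\nu_{n-1}$, $\nabla u$ does not vanish on $L$ (indeed $\int_L|\nabla u|^2=\infty$), so Lemma~\ref{lemma separated} cannot be applied directly; instead fold $L=\bigcup_{m\ne0}(Q_1^\nu+m)$ to get
\[
\mathcal J_\varepsilon(u,Q_1^\nu,L)=\varepsilon\int_{Q_1^\nu}\int_{Q_1^\nu}\Big(\sum_{m\ne0}J_\varepsilon(x-z-m)\Big)|\nabla u(x)-\nabla u(z)|^2\,dz\,dx,
\]
and split each factor $Q_1^\nu$ into $Q_{1-\delta}^\nu$ and $C_\delta$. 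When at least one of $x,z$ lies in $Q_{1-\delta}^\nu$, one has $|x-z-m|\ge\delta/2$ for every $m\ne0$ (the gap $1/2-(1-\delta)/2$ between $Q_{1-\delta}^\nu$ and the neighbouring translates of $Q_1^\nu$); using $|a-b|^2\le2|a|^2+2|b|^2$, folding the $|\nabla u|^2$-factors back into $Q_1^\nu$, and the kernel estimate of the previous paragraph, this part is $\le C\varepsilon\,\omega_1(\varepsilon/\delta)\int_{Q_1^\nu}|\nabla u|^2$. When both $x,z$ lie in $C_\delta$, then $u=\tilde u_\varepsilon$ there and, since $\tilde u_\varepsilon$ is translation-invariant in the directions $\nu_1,\dots,\nu_{n-1}$, $\nabla\tilde u_\varepsilon(z)=\nabla\tilde u_\varepsilon(z+m)$; one then reruns the argument of Lemma~\ref{lemma S S} with the kernel $\sum_{m\ne0}J_\varepsilon(\cdot-m)$ in place of $J_\varepsilon$: split $C_\delta$ into $C_\delta\cap\{|x\cdot\nu|\le 2\varepsilon\}$ (of Lebesgue measure $O(\delta\varepsilon)$, where $|\nabla\tilde u_\varepsilon|\le C_\theta/\varepsilon$ is used) and its complement (where $\nabla\tilde u_\varepsilon=0$), use $\int_{\mathbb R^n}J(z)(|z|\wedge|z|^2)\,dz=M_J$, Lemma~\ref{lemma g}, and the fact that the disjoint translates $C_\delta-m$ ($m\ne0$) remain at distance $\ge\varepsilon$ from $C_\delta\cap\{|x\cdot\nu|\le\varepsilon\}$; this yields a bound $\le\kappa_2\delta$ (after enlarging $\kappa_2$ if necessary, or noting $\kappa_2\ge1$). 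Adding all contributions gives the claimed inequality.

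The main obstacle is exactly this last step: the lateral periodicity of $u$ blocks a naive separation argument, and one must combine (i) the folding identity for $\mathcal J_\varepsilon(u,Q_1^\nu,L)$, (ii) the fact that on the frame $u$ coincides with the concrete periodic profile $\tilde u_\varepsilon$, so that interactions across periodic faces are of the benign type controlled in Lemma~\ref{lemma S S}, and (iii) the $O(\delta\varepsilon)$ smallness of $C_\delta\cap\{|x\cdot\nu|\le\varepsilon\}$, which is what tames the $\varepsilon^{-2}$ produced by $|\nabla\tilde u_\varepsilon|^2$ (bounding the $C_\delta$--$C_\delta$ interaction crudely by $\mathcal J_\varepsilon(\tilde u_\varepsilon,Q_1^\nu,\mathbb R^n)$ does \emph{not} suffice, since that quantity is only $O(1)$, not $o(1)$). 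The remaining ingredients — the decomposition, the inequality $|a-b|^2\le2|a|^2+2|b|^2$, the scaling bounds for $\int_{|w|\ge r}J_\varepsilon$ in terms of $\omega_1$, and Lemmas~\ref{lemma g} and~\ref{lemma separated} — are routine; the only cosmetic point is that the distances produced (of order $\delta$ and of order $1$) make $\omega_1$ appear at arguments that are dimensional multiples of $\varepsilon/\delta$ and $\varepsilon$, which is absorbed into $\kappa_3,\kappa_4$.
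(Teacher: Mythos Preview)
Your overall strategy is sound and genuinely different from the paper's. The paper never folds the lateral interactions back into $Q_1^\nu\times Q_1^\nu$ with a periodized kernel; instead it decomposes $V\times\mathbb R^n$ directly into pieces such as $\hat S\times Q_{1-\delta}$, $(V\setminus Q)\times S_{1-\delta}$, $Q\times(S\setminus Q)$, and within the last one isolates the single near-diagonal block $(Q_1\setminus Q_{1-\delta})\times(Q_{1+\delta}\setminus Q_1)$, which is bounded by $\kappa_2\delta$ via Lemma~\ref{lemma S S}; all remaining pieces are handled by Lemma~\ref{lemma separated} together with lattice summation over translates of $Q$ to control the far interactions. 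Your folding identity is cleaner conceptually (everything lives on $Q_1^\nu\times Q_1^\nu$), while the paper's decomposition makes it transparent that the $\delta$-term comes from exactly one Lemma~\ref{lemma S S} application, hence the specific constant $\kappa_2$.

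There is, however, a genuine slip in your $C_\delta\times C_\delta$ step. The claim that ``the disjoint translates $C_\delta-m$ ($m\ne0$) remain at distance $\ge\varepsilon$ from $C_\delta\cap\{|x\cdot\nu|\le\varepsilon\}$'' is false: for $|m|_\infty=1$ the translate $C_\delta+m$ actually touches $C_\delta$ along a face, at arbitrary height $|x\cdot\nu|$. Fortunately this claim is not needed. After unfolding, your $C_\delta\times C_\delta$ term equals $\mathcal J_\varepsilon(\tilde u_\varepsilon, C_\delta, D)$ with $D=\bigcup_{m\ne0}(C_\delta+m)$; by symmetry it is at most $2\varepsilon\int_{C_\delta\cap\{|x\cdot\nu|<\varepsilon\}}\bigl(\int_{\mathbb R^n}J_\varepsilon(x-y)|\nabla\tilde u_\varepsilon(x)-\nabla\tilde u_\varepsilon(y)|^2\,dy\bigr)dx$, and the inner integral is $\le 4C_\theta^2 M_J/\varepsilon^2$ by Lemma~\ref{lemma g}. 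Since $\varepsilon<\delta<1/3$ forces $\varepsilon<(1-\delta)/2$, the set $C_\delta\cap\{|x\cdot\nu|<\varepsilon\}$ excludes the top/bottom faces of the frame and has measure $\le 2(n-1)\delta\varepsilon$, giving a bound of the form $c_n C_\theta^2 M_J\,\delta$. This constant is not a priori the $\kappa_2$ of Lemma~\ref{lemma S S}, so your proof yields the inequality with some dimensional constant in place of $\kappa_2$; that is equally serviceable in the application (Theorem~\ref{theorem liminf}), but strictly speaking it is a slightly different statement. If you want literally $\kappa_2$, split $D$ into $D\cap(Q_{1+\delta}^\nu\setminus Q_1^\nu)$ (bounded by Lemma~\ref{lemma S S}) and $D\setminus Q_{1+\delta}^\nu$ (at distance $\ge\delta/2$ from $C_\delta$, handled by folding plus the $\omega_1$ estimate), mirroring the paper's treatment. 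Finally, your remark that $\omega_1$ at ``dimensional multiples of $\varepsilon/\delta$'' can be absorbed into $\kappa_3$ is not literally correct without a doubling hypothesis on $\omega_1$; one should state the bound with $\omega_1(2\varepsilon/\delta)$ (the paper is equally loose here, and it is harmless in the limit $\varepsilon\to0$ used downstream).
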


\begin{proof}
Without loss of generality, we may assume that $\nu=e_{n}$, the $n$-th vector
of the canonical basis. For simplicity we omit the superscript $\nu$ in the
notation for $Q_{\rho}^{\nu}$, $\hat{Q}_{\rho}^{\nu}$, $S_{\rho}^{\nu}$,
$\hat{S}_{\rho}^{\nu}$, $V^{\nu}$, $X^{\nu}$, $w^{\nu}$, and the subscript
$\rho$ when $\rho=1$. Write%
\begin{align}
V&\times\mathbb{R}^{n}=((V\setminus Q)\times Q)\cup((V\setminus Q)\times\hat
{Q})\cup(Q\times Q)\cup(Q\times\hat{Q})\label{li 100}\\
&\subset(\hat{S}{\times} Q)\cup((V{\setminus}Q){\times} S)
\cup(\hat{S}{\times}\hat{S})\cup(Q{\times} Q)\cup (Q{\times}(S{\setminus}Q))\cup (Q{\times}\hat{S})~.
\nonumber
\end{align}
Since $J$ is even we have
\begin{align}
\mathcal{J}_{\varepsilon}(u,V,\mathbb{R}^{n})-\mathcal{J}_{\varepsilon}(u,Q)
&  \leq 2\varepsilon\int_{\hat{S}}\Bigl(\int_{Q_{1-\delta}}J_{\varepsilon
}(x-y)|\nabla u(x)|^{2}dx\Bigr)dy\nonumber\\
&  \quad+\varepsilon\int_{V\setminus Q}\Bigl(\int_{S_{1-\delta}}%
J_{\varepsilon}(x-y)|\nabla u(x)|^{2}dx\Bigr)dy\label{li 101}\\
&  \quad+\varepsilon\int_{Q}\Bigl(\int_{S\setminus Q}J_{\varepsilon
}(x-y)|\nabla u(x)-\nabla u(y)|^{2}dx\Bigr)dy~,\nonumber
\end{align}
where we have used the equalities $u=\pm1$ and $\nabla u=0$ in
$\hat{S}_{1-\delta}$,  which follow from the facts that $u\in X$ and
$u=\tilde{u}_{\varepsilon}$ on $Q_{1}\setminus Q_{1-\delta}$ (see (\ref{admonbou}), (\ref{mollifier support}), and the inequalities $0<\varepsilon<\delta<1/3$).

We now estimate the first term on the right-hand side of (\ref{li 101}). By
Lemma \ref{lemma separated} and because $\nabla u=0$ in $\hat{S}$, we
have%
\begin{equation}
\varepsilon\int_{\hat{S}}\Bigl(\int_{Q_{1-\delta}}J_{\varepsilon
}(x-y)|\nabla u(x)|^{2}dx\Bigr)dy\leq\varepsilon\omega_{1}\Bigl(\frac
{\varepsilon}{\delta}\Bigr)\int_{Q_{1-\delta}}|\nabla u(x)|^{2}dx~.
\label{li 102}%
\end{equation}

To estimate the second term on the right-hand side of (\ref{li 101}), we
identify $\mathbb{Z}^{n}$ with $\mathbb{Z}^{n-1}\times\mathbb{Z}$ so that for
$\alpha=(\alpha_{1},\ldots,\alpha_{n-1})\in\mathbb{Z}^{n-1}$ and $\beta
\in\mathbb{Z}$ we have $(\alpha,\beta)=(\alpha_{1},\ldots,\alpha_{n-1}%
,\beta)\in\mathbb{Z}^{n}$. Write%
\[
S\setminus Q_{3}=\bigcup_{\alpha\in\mathbb{Z}^{n-1},~|\alpha|_{\infty}\geq
2}((\alpha,0)+Q)~,\quad V=\bigcup_{\beta\in\mathbb{Z}}((0,\beta)+Q)~,
\]
where $|\alpha|_{\infty}:=\max\{|\alpha_{1}|,\ldots,|\alpha_{n-1}|\}$. Then%
\begin{align}
\varepsilon\int_{V\setminus Q}\Bigl(  &  \int_{S_{1-\delta}}J_{\varepsilon
}(x-y)|\nabla u(x)|^{2}dx\Bigr)dy\nonumber\\
&  \leq\varepsilon\int_{V\setminus Q}\Bigl(\int_{S_{1-\delta}\cap Q_{3}%
}J_{\varepsilon}(x-y)|\nabla u(x)|^{2}dx\Bigr)dy\label{li 103}\\
&  \quad+\sum_{\alpha\in\mathbb{Z}^{n-1},~|\alpha|_{\infty}\geq2}\sum
_{\beta\in\mathbb{Z}}\varepsilon\int_{(0,\beta)+Q}\Bigl(\int_{(\alpha
,0)+Q}J_{\varepsilon}(x-y)|\nabla u(x)|^{2}dx\Bigr)dy~.\nonumber
\end{align}
By Lemma \ref{lemma separated} and because $\nabla u=0$ in $V\setminus Q$, we
have%
\[
\varepsilon\int_{V\setminus Q}\Bigl(\int_{S_{1-\delta}\cap Q_{3}%
}J_{\varepsilon}(x-y)|\nabla u(x)|^{2}dx\Bigr)dy\leq\varepsilon\omega
_{1}\Bigl(\frac{\varepsilon}{\delta}\Bigr)\int_{S_{1-\delta}\cap Q_{3}}|\nabla
u(x)|^{2}dx~.\]
To estimate the second term on the right-hand side of (\ref{li 103}), we use
the change of variables $\zeta=x-y$ and observe that for $x\in(\alpha,0)+Q$
and $y\in(0,\beta)+Q$ we have $\zeta\in(\alpha,-\beta)+Q_{2}$. Therefore, we
obtain%
\begin{align*}
\int_{(0,\beta)+Q}\Bigl(  &  \int_{(\alpha,0)+Q}J_{\varepsilon}(x-y)|\nabla
u(x)|^{2}dx\Bigr)dy\\
&  =\int_{(\alpha,0)+Q}|\nabla u(x)|^{2}\Bigl(\int_{(0,\beta)+Q}%
J_{\varepsilon}(x-y)~dy\Bigr)dx\\
&  \leq\int_{(\alpha,0)+Q}|\nabla u(x)|^{2}dx\int_{(\alpha,-\beta)+Q_{2}%
}J_{\varepsilon}(\zeta)~d\zeta\\
&  =\int_{Q}|\nabla u(x)|^{2}dx\int_{(\alpha,-\beta)+Q_{2}}J_{\varepsilon
}(\zeta)~d\zeta~,
\end{align*}
where in the last equality we used the periodicity of $u\in X$. Hence%
\begin{align*}
\sum_{\alpha\in\mathbb{Z}^{n-1},~|\alpha|_{\infty}\geq2}\sum_{\beta
\in\mathbb{Z}}  &  \varepsilon\int_{(0,\beta)+Q}\Bigl(\int_{(\alpha
,0)+Q}J_{\varepsilon}(x-y)|\nabla u(x)|^{2}dx\Bigr)dy\\
&  \leq\varepsilon\int_{Q}|\nabla u(x)|^{2}dx\sum_{\alpha\in\mathbb{Z}%
^{n-1},~|\alpha|_{\infty}\geq2}\sum_{\beta\in\mathbb{Z}}\int_{(\alpha
,-\beta)+Q_{2}}J_{\varepsilon}(\zeta)~d\zeta\\
&  \leq2^{n}\varepsilon\int_{Q}|\nabla u(x)|^{2}dx
\int_{\hat{Q}_{2}}J_{\varepsilon}(\zeta)~d\zeta~.
\end{align*}
In the last inequality we used the fact that each point of $\hat{Q}_{2}$
belongs to at most $2^{n}$ cubes of the form
$(\alpha,-\beta)+Q_{2}$ for $\alpha\in\mathbb{Z}^{n-1}$, with $|\alpha
|_{\infty}\geq2$, and $\beta\in\mathbb{Z}$. After the change of variables
$z=\zeta/\varepsilon$ we obtain (see (\ref{little omega}))
\[
\int_{\hat{Q}_{2}}J_{\varepsilon}(\zeta)~d\zeta\leq
\int_{\mathbb{R}^{n}\setminus B_{1/\varepsilon}(0)}J(z)~dz\leq
\omega_{1}(\varepsilon)~.
\]
Combining the last five inequalities and using the periodicity of $u$, 
 from (\ref{li 103}) we
obtain%
\begin{align}
\varepsilon\int_{V\setminus Q}\Bigl(\int_{S_{1-\delta}}  &  J_{\varepsilon
}(x-y)|\nabla u(x)|^{2}dx\Bigr)dy\label{li 10}\\
&  \leq\Bigl(\omega_{1}\Bigl(\frac{\varepsilon}{\delta}\Bigr)+2^{n}\omega
_{1}(\varepsilon)\Bigr)\varepsilon\int_{S\cap Q_{3}}|\nabla u(x)|^{2}%
dx\nonumber\\
&  =3^{n-1}\Bigl(\omega_{1}\Bigl(\frac{\varepsilon}{\delta}\Bigr)+2^{n}%
\omega_{1}(\varepsilon)\Bigr)\varepsilon\int_{Q}|\nabla u(x)|^{2}dx~.\nonumber
\end{align}

Finally, to estimate the last term on the right-hand side of (\ref{li 101}),
we use the inclusion
\begin{align*}
Q&\times(S\setminus Q)   
\subset\big(Q\times(S\setminus Q_{3})\big)\cup\big(Q_{1-\delta
}\times(S\cap(Q_{3}\setminus Q_{1})\big)\\
&  \cup\big((Q_{1}\setminus Q_{1-\delta})\times 
(Q_{1+\delta}\setminus Q_{1})\big)\cup\big((Q_{1}\setminus Q_{1-\delta})\times
(S\cap(Q_{3}\setminus Q_{1+\delta}))\big)
\end{align*}
and we write
\begin{align}
\varepsilon\int_{Q}\Bigl(\int_{S\setminus Q}  &  J_{\varepsilon}(x-y)|\nabla
u(x)-\nabla u(y)|^{2}dx\Bigr)dy\nonumber\\
&  \leq\varepsilon\int_{Q}\Bigl(\int_{S\setminus Q_{3}}J_{\varepsilon
}(x-y)|\nabla u(x)-\nabla u(y)|^{2}dx\Bigr)dy\nonumber\\
&  \quad+\varepsilon\int_{Q_{1-\delta}}\Bigl(\int_{S\cap(Q_{3}\setminus
Q_{1})}J_{\varepsilon}(x-y)|\nabla u(x)-\nabla u(y)|^{2}%
dx\Bigr)dy\label{li 104}\\
&  \quad+\varepsilon\int_{Q_{1}\setminus Q_{1-\delta}
}\Bigl(\int_{Q_{1+\delta}\setminus Q_{1}
}J_{\varepsilon}(x-y)|\nabla u(x)-\nabla u(y)|^{2}dx\Bigr)dy\nonumber\\
&  \quad+\varepsilon\int_{Q_{1}\setminus Q_{1-\delta}
}\Bigl(\int_{S\cap(Q_{3}\setminus
Q_{1+\delta})}J_{\varepsilon}(x-y)|\nabla u(x)-\nabla u(y)|^{2}%
dx\Bigr)dy~.\nonumber
\end{align}
By Lemma \ref{lemma separated},
\begin{align}
\varepsilon\int_{Q_{1-\delta}}\Bigl(  &  \int_{S\cap(Q_{3}\setminus Q_{1}%
)}J_{\varepsilon}(x-y)|\nabla u(x)-\nabla u(y)|^{2}dx\Bigr)dy\nonumber\\
&  +\varepsilon\int_{Q_{1}\setminus Q_{1-\delta}
}\Bigl(\int_{S\cap(Q_{3}\setminus
Q_{1+\delta})}J_{\varepsilon}(x-y)|\nabla u(x)-\nabla u(y)|^{2}%
dx\Bigr)dy\label{li 105}\\
&  \leq2\varepsilon\omega_{1}\Bigl(\frac{\varepsilon}{\delta}\Bigr)\int_{S\cap
Q_{3}}|\nabla u(x)|^{2}dx=2{\cdot}3^{n-1}\varepsilon\omega_{1}\Bigl(\frac
{\varepsilon}{\delta}\Bigr)\int_{Q}|\nabla u(x)|^{2}dx~,\nonumber
\end{align}
where in the last equality we used the periodicity of $u$. On the other hand,
by Lemma \ref{lemma S S}%
\begin{equation}
\varepsilon\int_{Q_{1}\setminus Q_{1-\delta}
}\Bigl(\int_{Q_{1+\delta}\setminus Q_{1})
}J_{\varepsilon
}(x-y)|\nabla u(x)-\nabla u(y)|^{2}dx\Bigr)dy\leq\kappa_{2}\delta~.
\label{li 106}%
\end{equation}
It remains to study the first term on the right-hand side of (\ref{li 104}).
We have
\begin{align}
\varepsilon\int_{Q}\Bigl(  &  \int_{S\setminus Q_{3}}J_{\varepsilon
}(x-y)|\nabla u(x)-\nabla u(y)|^{2}dx\Bigr)dy\nonumber\\
&  \leq2\varepsilon\int_{Q}\Bigl(\int_{S\setminus Q_{3}}J_{\varepsilon
}(x-y)|\nabla u(x)|^{2}dx\Bigr)dy\label{li 107}\\
&  \quad+2\varepsilon\int_{Q}\Bigl(\int_{S\setminus Q_{3}}J_{\varepsilon
}(x-y)~dx\Bigr)|\nabla u(y)|^{2}dy~.\nonumber
\end{align}
To estimate the first term on the right-hand side of (\ref{li 107}) we write%
\begin{align*}
2\varepsilon\int_{Q}\Bigl(\int_{S\setminus Q_{3}}  &  J_{\varepsilon
}(x-y)|\nabla u(x)|^{2}dx\Bigr)dy\\
&  =2\varepsilon\sum_{\alpha\in\mathbb{Z}^{n}\cap(S\setminus Q_{3})}\int%
_{Q}\Bigl(\int_{\alpha+Q}J_{\varepsilon}(x-y)|\nabla u(x)|^{2}dx\Bigr)dy~.
\end{align*}
By Fubini's theorem and the change of variables $\zeta=x-y$, we get%
\begin{align*}
\int_{Q}&\Bigl(\int_{\alpha+Q}J_{\varepsilon}(x-y)|\nabla u(x)|^{2}dx\Bigr)dy
  =\int_{\alpha+Q}\Bigl(\int_{Q}J_{\varepsilon}(x-y)~dy\Bigr)|\nabla
u(x)|^{2}dx\\
&
  \leq\int_{\alpha+Q}\Bigl(\int_{x-Q}J_{\varepsilon}(\zeta)~d\zeta
\Bigr)|\nabla u(x)|^{2}dx
  \leq\int_{Q}|\nabla u(x)|^{2}dx\int_{\alpha-Q_{2}}J_{\varepsilon}%
(\zeta)~d\zeta~,
\end{align*}
where in the last inequality we have used the periodicity of $u$ and the
inclusion $x-Q\subset\alpha-Q_{2}$ for $x\in\alpha+Q$. Hence,%
\begin{align*}
2\varepsilon\sum_{\alpha\in\mathbb{Z}^{n}\cap(S\setminus Q_{3})}  &  \int%
_{Q}\Bigl(\int_{\alpha+Q}J_{\varepsilon}(x-y)|\nabla u(x)|^{2}dx\Bigr)dy\\
&  \leq2\varepsilon\int_{Q}|\nabla u(x)|^{2}dx\sum_{\alpha\in\mathbb{Z}%
^{n}\cap(S\setminus Q_{3})}\int_{\alpha-Q_{2}}J_{\varepsilon}(\zeta)~d\zeta\\
&  \leq2^{n}\varepsilon\int_{Q}|\nabla u(x)|^{2}dx\int_{\hat{Q}_{2}%
}J_{\varepsilon}(\zeta)~d\zeta~,
\end{align*}
where in the last inequality we used the fact that each point of $\hat{Q}_{2}$
belongs to at most $2^{n-1}$ cubes of the form $\alpha-Q_{2}$ for $\alpha
\in\mathbb{Z}^{n}\cap(S\setminus Q_{3})$. After the change of variables
$z=\zeta/\varepsilon$, we obtain%
\begin{equation}
2\varepsilon\int_{Q}\Bigl(\int_{S\setminus Q_{3}}\!\!\!\!\!\!J_{\varepsilon
}(x-y)|\nabla u(x)|^{2}dx\Bigr)dy\leq2^{n}\varepsilon\int_{Q}|\nabla
u(x)|^{2}dx\int_{\mathbb{R}^{n}\setminus B_{1/\varepsilon}%
(0)}\!\!\!\!\!\!\!\!\!\!\!\!\!\!\!\!\!\!\!\!J(z)|z|~dz~. \label{li 2}%
\end{equation}

We now estimate the second term on the right-hand side of (\ref{li 107}). With
the change of variables $z=(x-y)/\varepsilon$, we have%
\begin{equation}
2\varepsilon\int_{Q}\Bigl(\int_{S\setminus Q_{3}}\!\!\!\!\!\!\!J_{\varepsilon}%
(x-y)~dx\Bigr)|\nabla u(y)|^{2}dy\leq2\varepsilon\int_{\mathbb{R}^{n}\setminus
B_{1/\varepsilon}(0)}\!\!\!\!\!\!\!\!\!\!\!\!\!\!\!\!\!\!\!\!
J(z)|z|~dz\int_{Q}|\nabla u(y)|^{2}dy~.
\label{li 3}%
\end{equation}
Combining the inequalities (\ref{li 107})--(\ref{li 3}), we obtain
\begin{equation}
2\varepsilon\int_{Q}\Bigl(\int_{S\setminus Q_{3}}J_{\varepsilon}(x-y)|\nabla
u(x)|^{2}dx\Bigr)dy\leq2^{n}\varepsilon\omega_{1}(\varepsilon)\int%
_{Q}|\nabla u(x)|^{2}dx~. \label{li 108}%
\end{equation}
The conclusion follows from (\ref{li 102}), (\ref{li 10}), (\ref{li 104}),
(\ref{li 105}), (\ref{li 106}), and (\ref{li 108}).
\end{proof}

\bigskip\bigskip

\begin{proof}
[Proof of Theorem \ref{theorem liminf}]By Theorem
\ref{theorem compactness n>1} we deduce that $u\in BV(\Omega;\{-1,1\})$. 
Let $\mu_{j}$ be the nonnegative Radon measure on $\Omega$ defined by
\begin{equation}
\mu_{j}(B):=\frac{1}{\varepsilon}\int_{B}W(u_{j}(x))\,dx
+\varepsilon\int_{B}\int_{\Omega}J_{\varepsilon}(x-y)|\nabla
u_{j}(x)-\nabla u_{j}(y)|^{2}dxdy\label{DM1}
\end{equation}
for every Borel set $B\subset\Omega$.
Since $\mu_{j}(\Omega)=\mathcal{F}_{\varepsilon_{j}%
}(u_{j},\Omega)$, by (\ref{DM2}) $\mu_{j}(\Omega)$ is bounded uniformly with respect to~$j$.
Extracting a subsequence (not relabeled), we may assume that the liminf in
(\ref{liminf inequality}) is a limit and that
$\mu_{j}\overset{\ast}{\rightharpoonup}\mu$ weakly$^*$ in the space
$\mathcal{M}_{b}(\Omega)$ of bounded Radon measures on $\Omega$, considered, 
as usual, as the dual of the space $C_{0}(\Omega)$ of continuous functions on 
$\overline\Omega$ vanishing on $\partial\Omega$. 
Let $g$ be the density of the absolutely continuous
part of $\mu$ with respect to $\mathcal{H}^{n-1}$ restricted to $S_{u}$. Then the
inequality (\ref{liminf inequality}) will follow from%
\begin{equation}
g(x_{0})\geq\psi(\nu_{u}(x_{0}))\text{ for }\mathcal{H}^{n-1}\text{ a.e.
}x_{0}\in S_{u}~. \label{liminf1}%
\end{equation}
To prove this inequality, fix $x_{0}\in S_{u}$ such that, setting $\nu
:=\nu_{u}(x_{0})$, we have%
\begin{align}
&  \lim_{\rho\rightarrow0^{+}}\frac{1}{\rho^{n}}\int_{Q_{\rho}^{\nu}%
}|u(x+x_{0})-w^{\nu}(x+x_{0})|~dx=0~,\label{liminf2}\\
&  g(x_{0})=\lim_{\rho\rightarrow0^{+}}\frac{\mu(x_{0}+\overline{Q_{\rho}%
^{\nu}})}{\rho^{n-1}}<+\infty~. \label{liminf3}%
\end{align}
It is well-known (see \cite[Theorem 3 in Section 5.9]{evans-gariepy}) that
(\ref{liminf2}) and (\ref{liminf3}) hold for $\mathcal{H}^{n-1}$ a.e.
$x_{0}\in S_{u}$. Since $\mu_{j}\overset{\ast}{\rightharpoonup}\mu$ weakly$^*$ in $\mathcal{M}_{b}(\Omega)$,
by (\ref{214bis}) and (\ref{DM1}), using a change of variables, we get%
\begin{align*}
g(x_{0})  &  =\lim_{\rho\rightarrow0^{+}}\frac{\mu(x_{0}+\overline{Q_{\rho
}^{\nu}})}{\rho^{n-1}}\geq\limsup_{\rho\rightarrow0^{+}}\,\limsup_{j\rightarrow
+\infty}\frac{\mu_{j} (x_{0}+Q_{\rho}^{\nu})}{\rho^{n-1}}\\
&\geq  \limsup_{\rho\rightarrow0^{+}}\,\limsup_{j\rightarrow
+\infty}\frac{\mathcal{F}_{\varepsilon_{j}}(u_{j},x_{0}+Q_{\rho}^{\nu})}%
{\rho^{n-1}}=\limsup_{\rho\rightarrow0^{+}}\,\limsup_{j\rightarrow+\infty}\mathcal{F}%
_{\eta_{j,\rho}}(v_{j,\rho},Q_{1}^{\nu})~,
\end{align*}
where $\eta_{j,\rho}:=\varepsilon_{j}/\rho$ and $v_{j,\rho}(y):=u_{j}%
(x_{0}+\rho y)$. On the other hand, since $u_{j}\rightarrow u$ in
$L^{2}(\Omega)$, by (\ref{liminf2}) we obtain%
\begin{align*}
0  &  =\lim_{\rho\rightarrow0^{+}}\lim_{j\rightarrow+\infty}\frac{1}{\rho^{n}%
}\int_{Q_{\rho}^{\nu}}|u_{j}(x+x_{0})-w^{\nu}(x+x_{0})|~dx\\
&  =\lim_{\rho\rightarrow0^{+}}\lim_{j\rightarrow+\infty}\int_{Q_{1}^{\nu}%
}|v_{j,\rho}(x)-w^{\nu}(x)|~dx~.
\end{align*}
Since for every $\rho>0$
\[
\lim_{j\rightarrow+\infty}\eta_{j,\rho}=0~,
\]
by a diagonal argument we can choose $\rho_{j}\rightarrow0^{+}$ such that,
setting $\eta_{j}:=\eta_{j,\rho_{j}}$ and $v_{j}:=v_{j,\rho_{j}}$, we have
$\eta_{j}\rightarrow0^{+}$, $v_{j}\rightarrow w^{\nu}$ in $L^{1}(Q_{1}^{\nu}%
)$, and
\begin{equation}
g(x_{0})\geq\limsup_{j\rightarrow+\infty}\mathcal{F}_{\eta_{j}}(v_{j}%
,Q_{1}^{\nu})~. \label{liminf4}%
\end{equation}
The finiteness of $g(x_{0})$ and Theorem \ref{theorem compactness n>1} yield
that $v_{j}\rightarrow w^{\nu}$ in $L^{2}(Q_{1}^{\nu})$. We can now apply the
modification Theorem \ref{modification}: there exists $\delta_{\nu}>0$ such that 
for every $0<\delta<\delta_{\nu}$ we obtain a sequence $\{w_{j}\}\subset W_{\operatorname*{loc}}%
^{1,2}(Q_{1}^{\nu})\cap L^{2}(Q_{1}^{\nu})$ with $w_{j}\rightarrow w^{\nu}$ in
$L^{2}(Q_{1}^{\nu})$, $w_{j}=w^{\nu}\!\ast\theta_{\varepsilon_{j}}$ in
$Q_{1}^{\nu}\setminus Q_{1-\delta}^{\nu}$, and%
\begin{equation}
\limsup_{j\rightarrow+\infty}\mathcal{F}_{\eta_{j}}(v_{j},Q_{1}^{\nu}%
)\geq\limsup_{j\rightarrow+\infty}\mathcal{F}_{\eta_{j}}(w_{j},Q_{1}^{\nu})
-\kappa_{1}\delta~, \label{liminf5}%
\end{equation}
where, we recall, the constant $\kappa_{1}$ is independent of $\delta$. Extend
$w_{j}$ to $\mathbb{R}^{n}$ in such a way that $w_{j}(x)=\pm1$ for $\pm
x\cdot\nu\geq\frac{1}{2}$ and $w(x+\nu_{i})=w(x)$ for all $x\in\mathbb{R}^{n}$
and for all $i=1$, \ldots, $n-1$, where $\nu_{i}$ are the vectors in
(\ref{Q ni}). Then $w_{j}\in X^{\nu}$ and so we can apply Lemma
\ref{lemma extra terms} to obtain
\begin{align}
\limsup_{j\rightarrow+\infty}\, &\mathcal{F}_{\eta_{j}}(w_{j},Q_{1}^{\nu}) 
\geq\limsup_{j\rightarrow+\infty}(\mathcal{W}_{\eta_{j}}(w_{j},Q_{1}^{\nu
})+\mathcal{J}_{\eta_{j}}(w_{j},V^{\nu},\mathbb{R}^{n}))+{}\label{liminf6}\\
&  -\kappa_{2}\delta-\limsup_{j\rightarrow+\infty}\Bigl(\kappa_{3}\omega
_{1}\Bigl(\frac{\eta_{j}}{\delta}\Bigr)+\kappa_{4}\omega_{1}(\eta
_{j})\Bigr)\eta_{j}\int_{Q_{1}^{\nu}}|\nabla w_{j}(x)|^{2}dx~,\nonumber
\end{align}
where we recall that $\mathcal{W}_{\eta_{j}}$ is defined in (\ref{W local}).
By (\ref{psi}),
\begin{equation}
\mathcal{W}_{\eta_{j}}(w_{j},Q_{1}^{\nu})+\mathcal{J}_{\eta_{j}}(w_{j},V^{\nu
},\mathbb{R}^{n})\geq\psi(\nu) \label{li 7}%
\end{equation}
for every $j$ with $\eta_{j}<1$. By (\ref{liminf4}) and (\ref{liminf4})
the finiteness of $g(x_{0})$ implies that 
$\mathcal{F}_{\eta_{j}}(w_{j},Q_{1}^{\nu})$ is bounded uniformly with respect to $j$.
Therefore Lemma \ref{F53}, together with
the periodicity of $w_{j}$, proves that the same property holds for $\eta_{j}\int_{Q_{1}^{\nu}}|\nabla
w_{j}(x)|^{2}dx$. Together with
(\ref{little omega}), (\ref{liminf4}), (\ref{liminf5}), (\ref{liminf6}), and
(\ref{li 7}), this shows that $g(x_{0})\geq\psi(\nu)-\kappa_{1}\delta
-\kappa_{2}\delta$ for every $0<\delta<\delta_{\nu}$. Taking the limit as $\delta
\rightarrow0^{+}$ we obtain (\ref{liminf1}). This concludes the proof of the theorem.
\end{proof}

\bigskip

\section{Gamma Limsup Inequality}

\label{section gamma limsup}

In this section we prove the $\Gamma$-limsup inequality. Fix $\varepsilon
_{j}\rightarrow0^{+}$. For every $u\in BV(\Omega;\{-1,1\})$ we define%
\begin{equation}
\mathcal{F}^{^{\prime\prime}}(u,\Omega):=\inf\bigl\{\limsup_{j\rightarrow
+\infty}\mathcal{F}_{\varepsilon_{j}}(u_{j},\Omega):~u_{j}\rightarrow u\text{
in }L^{2}(\Omega)\bigr\}~. \label{limsup}%
\end{equation}

\begin{theorem}
[$\Gamma$-Limsup]\label{theorem limsup}For every $u\in BV(\Omega;\{-1,1\})$ we
have%
\begin{equation}
\mathcal{F}^{^{\prime\prime}}(u,\Omega)\leq\int_{S_{u}}\psi(\nu_{u}%
)~d\mathcal{H}^{n-1}~. \label{limsup inequality}%
\end{equation}

\end{theorem}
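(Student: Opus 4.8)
The plan is to follow the standard three–step scheme for a $\Gamma$–$\limsup$ inequality: a flat–interface construction, a patching argument for polyhedral interfaces built on the modification Theorem~\ref{modification}, and a density argument for general $u$. I will use throughout that $\mathcal{F}^{\prime\prime}(\cdot,\Omega)$ (and its obvious localizations $\mathcal{F}^{\prime\prime}(\cdot,A)$ on open sets $A$) defined in (\ref{limsup}) is lower semicontinuous for $L^{2}$ convergence, being a $\Gamma$–upper limit, and I may assume the right–hand side of (\ref{limsup inequality}) is finite.

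\emph{Flat interface.} First I would prove that $\mathcal{F}^{\prime\prime}(w^{\nu}(\cdot-x_{0}),x_{0}+Q_{\rho}^{\nu})\le\rho^{n-1}\psi(\nu)$ for every $\nu\in\mathbb{S}^{n-1}$. By the scaling identity $\mathcal{F}_{\varepsilon}(u,x_{0}+Q_{\rho}^{\nu})=\rho^{n-1}\mathcal{F}_{\varepsilon/\rho}(u(x_{0}+\rho\,\cdot),Q_{1}^{\nu})$ already used in the proof of Theorem~\ref{theorem liminf}, it suffices to treat $x_{0}=0$, $\rho=1$. Given $\eta>0$, pick $\varepsilon_{0}\in(0,1)$ and $v\in X^{\nu}$ with $\mathcal{F}_{\varepsilon_{0}}^{\nu}(v)\le\psi(\nu)+\eta$, set $\lambda_{j}:=\varepsilon_{j}/\varepsilon_{0}$ and $u_{j}(x):=v(x/\lambda_{j})$. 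Since $v$ is periodic in the directions $\nu_{1},\dots,\nu_{n-1}$ and equals $\pm1$ for $\pm x\cdot\nu\ge 1/2$, $u_{j}$ equals $w^{\nu}$ outside the slab $\{|x\cdot\nu|<\lambda_{j}/2\}$; using $v\in L^{2}(Q^{\nu})$ (which follows from (\ref{W2}) since $\mathcal{F}_{\varepsilon_{0}}^{\nu}(v)<+\infty$) together with periodicity, $u_{j}\to w^{\nu}$ in $L^{2}(Q_{1}^{\nu})$. The change of variables $x=\lambda_{j}x'$ turns $\mathcal{W}_{\varepsilon_{j}}(u_{j},Q_{1}^{\nu})$ into $\lambda_{j}^{n-1}\mathcal{W}_{\varepsilon_{0}}(v,Q^{\nu}_{1/\lambda_{j}})$, which by periodicity and $W(v)=0$ off the unit slab converges to $\mathcal{W}_{\varepsilon_{0}}(v,Q^{\nu})$. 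For the nonlocal term the same scaling, after bounding the inner integral over all of $\mathbb{R}^{n}$, gives $\mathcal{J}_{\varepsilon_{j}}(u_{j},Q_{1}^{\nu})\le\lambda_{j}^{n-1}\varepsilon_{0}\int_{Q^{\nu}_{1/\lambda_{j}}}F$, where $F(x):=\int_{\mathbb{R}^{n}}J_{\varepsilon_{0}}(x-y)|\nabla v(x)-\nabla v(y)|^{2}\,dy$ is nonnegative and periodic in $\nu_{1},\dots,\nu_{n-1}$; hence $\lambda_{j}^{n-1}\int_{Q^{\nu}_{1/\lambda_{j}}}F\to\int_{V^{\nu}}F$, and $\varepsilon_{0}\int_{V^{\nu}}F$ is exactly the nonlocal part of $\mathcal{F}_{\varepsilon_{0}}^{\nu}(v)$ — this is precisely why the cell energy in (\ref{psi}) is integrated over $V^{\nu}\times\mathbb{R}^{n}$. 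Therefore $\limsup_{j}\mathcal{F}_{\varepsilon_{j}}(u_{j},Q_{1}^{\nu})\le\mathcal{F}_{\varepsilon_{0}}^{\nu}(v)\le\psi(\nu)+\eta$, and letting $\eta\to0^{+}$ gives the claim.

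\emph{Polyhedral interface.} Next I would prove (\ref{limsup inequality}) when $E:=\{u=1\}$ is polyhedral, so $S_{u}=\partial E\cap\Omega$ is a finite union of flat faces $F_{1},\dots,F_{m}$ with unit normals $\nu_{k}$, meeting along an $(n-2)$–dimensional skeleton $\Sigma$. Fix a small $\delta>0$ and let $N_{\delta}$ be a $\delta$–neighbourhood of $\Sigma$. On $N_{\delta}$ I would use the mollification $u\ast\theta_{\varepsilon_{j}}$: its energy there is $O(\delta)$, since $\mathcal{W}_{\varepsilon_{j}}(u\ast\theta_{\varepsilon_{j}},N_{\delta})\le\varepsilon_{j}^{-1}(\sup W)\,\mathcal{L}^{n}(N_{\delta}\cap\{\operatorname{dist}(\cdot,\partial E)<\varepsilon_{j}\})=O(\delta)$, and the nonlocal term on $N_{\delta}$ is $O(\delta)$ by the bound $|\nabla(u\ast\theta_{\varepsilon_{j}})(x)-\nabla(u\ast\theta_{\varepsilon_{j}})(y)|^{2}\le c\varepsilon_{j}^{-2}(|\tfrac{x-y}{\varepsilon_{j}}|\wedge|\tfrac{x-y}{\varepsilon_{j}}|^{2})$, Lemma~\ref{lemma g}, and $\mathcal{L}^{n}(N_{\delta}\cap\operatorname{supp}\nabla(u\ast\theta_{\varepsilon_{j}}))=O(\varepsilon_{j}\delta)$. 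On the flat remainder of each $F_{k}$ I would cover $F_{k}\setminus N_{\delta}$ by finitely many cubes inside a thin slab around $F_{k}$, take in each of them the flat recovery sequence of the previous step, and then apply Theorem~\ref{modification} (with $P$ the corresponding piece of $F_{k}$ and $w^{\nu}=w^{\nu_{k}}$, suitably translated) to make it equal to $w^{\nu_{k}}\ast\theta_{\varepsilon_{j}}$ near the lateral boundary of the slab, at the cost of an extra $\kappa_{1}\delta$ per face. Because the faces are flat and $u=w^{\nu_{k}}$ near $F_{k}$ outside $N_{\delta}$, these constructions and the mollification on $N_{\delta}$ fit together into a single $v_{j}\in W^{1,2}_{\operatorname*{loc}}(\Omega)\cap L^{2}(\Omega)$ with $v_{j}\to u$ in $L^{2}(\Omega)$. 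Interactions between pieces lying on distinct faces and separated by a positive distance (uniform in $j$, once $N_{\delta}$ is removed) are $o(1)$ by Lemma~\ref{lemma separated}, while the cross–interactions supported near $\Sigma$ are again $O(\delta)$. Summing over all faces and over the cubes yields $\limsup_{j}\mathcal{F}_{\varepsilon_{j}}(v_{j},\Omega)\le\int_{S_{u}}\psi(\nu_{u})\,d\mathcal{H}^{n-1}+C\delta$; letting $\delta\to0^{+}$ gives (\ref{limsup inequality}) for polyhedral $u$.

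\emph{General $u$ and main obstacle.} For arbitrary $u\in BV(\Omega;\{-1,1\})$ I would approximate $E=\{u=1\}$ by polyhedral sets whose indicator functions $u_{k}\to u$ in $L^{2}(\Omega)$ and $\int_{S_{u_{k}}}\psi(\nu_{u_{k}})\,d\mathcal{H}^{n-1}\to\int_{S_{u}}\psi(\nu_{u})\,d\mathcal{H}^{n-1}$; this is possible because $\psi$ is bounded on $\mathbb{S}^{n-1}$ (a bound $\psi\le C$ follows by testing (\ref{psi}) with $v=w^{\nu}\ast\theta_{\varepsilon_{0}}$ and estimating exactly as in Lemma~\ref{lemma S S}) and because polyhedral sets are dense among sets of finite perimeter in the strict sense (convergence of the perimeter measures together with the normals), using the (semi)continuity properties of $\psi$. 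Combining the polyhedral case with the $L^{2}$–lower semicontinuity of $\mathcal{F}^{\prime\prime}(\cdot,\Omega)$ gives $\mathcal{F}^{\prime\prime}(u,\Omega)\le\liminf_{k}\mathcal{F}^{\prime\prime}(u_{k},\Omega)\le\int_{S_{u}}\psi(\nu_{u})\,d\mathcal{H}^{n-1}$. The hardest part is the polyhedral step: the nonlocality couples distant regions and the presence of gradients rules out naive cutting and pasting, so all matching has to be done through mollified profiles and Theorem~\ref{modification}, and every cross–interaction — between a flat construction and its mollified collar, between neighbouring faces, and across the skeleton — must be shown to cost at most $O(\delta)$; this bookkeeping, together with the periodic cell–formula computation identifying the scaled nonlocal energy with the $V^{\nu}\times\mathbb{R}^{n}$ integral in $\psi$, is where essentially all the work lies.
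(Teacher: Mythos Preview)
Your proposal is correct and follows essentially the same route as the paper: a scaled optimal profile for the flat step, the modification Theorem~\ref{modification} to match mollified profiles for gluing, mollification near the $(n-2)$--skeleton with an $O(\delta)$ energy bound, Lemma~\ref{lemma separated} for far--field cross terms, and finally density of polyhedral interfaces together with Reshetnyak's theorem for general $u$. The only organizational difference is that the paper proves the flat step directly for arbitrary prisms $P_{\rho}$ (Lemma~\ref{lemma one face}), so each face is handled by a single prism rather than by a cover of cubes; this spares the extra within--face gluing and the associated collar bookkeeping that your version requires, but the substance of the argument is the same.
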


\bigskip

To prove the $\Gamma$-limsup inequality we need the results proved in the following lemmas.

\begin{lemma}
\label{lemma n-2}Let $u\in BV_{\operatorname*{loc}}(\mathbb{R}^{n};\{-1,1\})$ and, for every $\varepsilon>0$, 
let $\tilde
{u}_{\varepsilon}$ be as in (\ref{u tilde}). Assume that there exists a bounded polyhedral set $\Sigma$ of
dimension $n-1$ such that $S_{u}=\Sigma$, 
let $\Sigma^{n-2}$ the union of all its
$n-2$ dimensional faces, 
and let $(\Sigma^{n-2})^{\delta
}$ be defined as in 
(\ref{F49}). Then  there exists $\delta_{\Sigma}>0$ such that for $0<\varepsilon<\delta<\delta_{\Sigma}$ we have 
\[
\mathcal{J}_{\varepsilon}(\tilde{u}_{\varepsilon},(\Sigma^{n-2})^{\delta
})
\leq c_{1}
\delta\mathcal{H}^{n-2}(\Sigma^{n-2})
\]
for some constant $c_{1}
>0$ independent of $\varepsilon$, $\delta$, and $\Sigma$.
\end{lemma}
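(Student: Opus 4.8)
The plan is to bound $\mathcal{J}_{\varepsilon}(\tilde{u}_{\varepsilon},(\Sigma^{n-2})^{\delta})$ by a crude pointwise estimate on $|\nabla\tilde{u}_{\varepsilon}(x)-\nabla\tilde{u}_{\varepsilon}(y)|$ combined with a volume estimate for the part of $(\Sigma^{n-2})^{\delta}$ that lies near $\Sigma$, following the pattern already used in Lemma~\ref{lemma S S}. Writing $\tilde{u}_{\varepsilon}=u\ast\theta_{\varepsilon}$ and using $|u|=1$, one has $\|\nabla\tilde{u}_{\varepsilon}\|_{\infty}\le C_{\theta}/\varepsilon$ and $\|\nabla^{2}\tilde{u}_{\varepsilon}\|_{\infty}\le C_{\theta}/\varepsilon^{2}$, and since $u$ is constant on each connected component of $\mathbb{R}^{n}\setminus\Sigma$ and $\operatorname{supp}\theta_{\varepsilon}\subset B_{\varepsilon}(0)$, one has $\nabla\tilde{u}_{\varepsilon}=0$ a.e.\ outside $(\Sigma)^{\varepsilon}$. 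The key geometric input, and the only place the polyhedral structure of $\Sigma$ enters, is that there exists $\delta_{\Sigma}>0$ such that
\begin{equation}
\mathcal{L}^{n}\big((\Sigma)^{2\varepsilon}\cap(\Sigma^{n-2})^{\delta}\big)\le c_{n}\,\varepsilon\,\delta\,\mathcal{H}^{n-2}(\Sigma^{n-2})\qquad\text{for }0<\varepsilon<\delta<\delta_{\Sigma}.\label{eq:volSkel}
\end{equation}

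To prove \eqref{eq:volSkel} I would choose $\delta_{\Sigma}$ so small that non-adjacent faces of $\Sigma$ are at mutual distance $\gtrsim\delta_{\Sigma}$ and that near the relative interior of each $(n-2)$-face $F$ the set $(\Sigma^{n-2})^{\delta}$ is a union of $2$-dimensional disks of radius $\delta$ transverse to $F$. After a rigid change of coordinates near such an $F$, $\Sigma$ is locally a finite union of half-hyperplanes through the $(n-2)$-plane spanned by $F$, so $(\Sigma)^{2\varepsilon}$ meets each transverse disk in a set of planar measure $\lesssim\varepsilon\delta$; Fubini along $F$ gives the local bound $\lesssim\varepsilon\delta\,\mathcal{H}^{n-2}(F)$, and summation over the $(n-2)$-faces gives \eqref{eq:volSkel}, the contribution coming from a neighbourhood of the lower skeleton $\Sigma^{n-3}$ being of order $\varepsilon\delta^{2}$ and hence absorbable after possibly shrinking $\delta_{\Sigma}$. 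I expect this geometric estimate, together with the bookkeeping needed to fix $\delta_{\Sigma}$, to be the only genuinely non-routine part of the argument.

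Granting \eqref{eq:volSkel}, set $A:=(\Sigma^{n-2})^{\delta}$ and, for $\sigma>0$, $A^{\sigma}:=A\cap(\Sigma)^{\sigma}$, $\hat{A}^{\sigma}:=A\setminus(\Sigma)^{\sigma}$, so that $\nabla\tilde{u}_{\varepsilon}\equiv0$ on $\hat{A}^{\varepsilon}$. As in Lemma~\ref{lemma S S}, the inclusion $A\times A=(A^{2\varepsilon}\times A^{2\varepsilon})\cup(A^{\varepsilon}\times\hat{A}^{2\varepsilon})\cup(\hat{A}^{2\varepsilon}\times A^{\varepsilon})\cup(\hat{A}^{\varepsilon}\times\hat{A}^{\varepsilon})$ together with the evenness of $J$ gives
\[
\mathcal{J}_{\varepsilon}(\tilde{u}_{\varepsilon},A)\le\mathcal{J}_{\varepsilon}(\tilde{u}_{\varepsilon},A^{2\varepsilon})+2\,\mathcal{J}_{\varepsilon}(\tilde{u}_{\varepsilon},A^{\varepsilon},\hat{A}^{2\varepsilon})+\mathcal{J}_{\varepsilon}(\tilde{u}_{\varepsilon},\hat{A}^{\varepsilon}),
\]
and the last term vanishes. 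For the first term, the mean value theorem and the $L^{\infty}$ bounds above give $|\nabla\tilde{u}_{\varepsilon}(x)-\nabla\tilde{u}_{\varepsilon}(y)|^{2}\le(a|x-y|)^{2}\wedge b^{2}$ with $a=C_{\theta}/\varepsilon^{2}$, $b=2C_{\theta}/\varepsilon$, so Lemma~\ref{lemma g} yields $\int_{\mathbb{R}^{n}}J_{\varepsilon}(x-y)|\nabla\tilde{u}_{\varepsilon}(x)-\nabla\tilde{u}_{\varepsilon}(y)|^{2}\,dx\le 4C_{\theta}^{2}M_{J}/\varepsilon^{2}$, whence $\mathcal{J}_{\varepsilon}(\tilde{u}_{\varepsilon},A^{2\varepsilon})\le(4C_{\theta}^{2}M_{J}/\varepsilon)\,\mathcal{L}^{n}(A^{2\varepsilon})$. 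For the mixed term, $\nabla\tilde{u}_{\varepsilon}=0$ on $\hat{A}^{2\varepsilon}$, while for $y\in A^{\varepsilon}$ and $x\in\hat{A}^{2\varepsilon}$ the triangle inequality for $\operatorname{dist}(\cdot,\Sigma)$ forces $|x-y|>\varepsilon$, so $\int_{\hat{A}^{2\varepsilon}}J_{\varepsilon}(x-y)\,dx\le\int_{\mathbb{R}^{n}\setminus B_{1}(0)}J(z)\,dz\le M_{J}$ and hence $\mathcal{J}_{\varepsilon}(\tilde{u}_{\varepsilon},A^{\varepsilon},\hat{A}^{2\varepsilon})\le(C_{\theta}^{2}M_{J}/\varepsilon)\,\mathcal{L}^{n}(A^{\varepsilon})$. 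Since $A^{\varepsilon}\subset A^{2\varepsilon}\subset(\Sigma)^{2\varepsilon}\cap(\Sigma^{n-2})^{\delta}$, combining these estimates gives $\mathcal{J}_{\varepsilon}(\tilde{u}_{\varepsilon},(\Sigma^{n-2})^{\delta})\le(C/\varepsilon)\,\mathcal{L}^{n}\big((\Sigma)^{2\varepsilon}\cap(\Sigma^{n-2})^{\delta}\big)$, and inserting \eqref{eq:volSkel} yields the claimed bound $c_{1}\,\delta\,\mathcal{H}^{n-2}(\Sigma^{n-2})$, the powers of $\varepsilon$ cancelling exactly.
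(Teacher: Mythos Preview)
Your proposal is correct and takes essentially the same approach as the paper: the paper's own proof is the one-liner ``repeat the proof of Lemma~\ref{lemma S S} with $C_{\delta}^{\sigma}$ and $\hat{C}_{\delta}^{\sigma}$ replaced by $\{x\in(\Sigma^{n-2})^{\delta}:\operatorname{dist}(x,\Sigma)<\sigma\}$ and its complement'', which is exactly the decomposition you carry out with $A^{\sigma}$ and $\hat{A}^{\sigma}$. You have in fact made explicit the one thing the paper leaves tacit, namely the volume estimate \eqref{eq:volSkel}, which is precisely what is needed to replace the elementary bound $\mathcal{L}^{n}(C_{\delta}^{2\varepsilon})\le 2^{n+1}\varepsilon\delta$ used in Lemma~\ref{lemma S S}.
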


\begin{proof}
It is enough to repeat the proof of Lemma \ref{lemma S S} with $C_{\delta
}^{\sigma}$ and $\hat{C}_{\delta}^{\sigma}$ replaced by $\{x\in(\Sigma
^{n-2})^{\delta
}:~\operatorname*{dist}(x,\Sigma)<\varepsilon\}$ and
$\{x\in(\Sigma^{n-2})^{\delta
}:~\operatorname*{dist}(x,\Sigma){} \geq {}
\varepsilon\}$.
\end{proof}

\begin{lemma}
\label{lemma one face}Let $P$ be a bounded polyhedron of dimension $n-1$
containing $0$ with normal $\nu$, let $\rho>0$, 
and let
$P_{\rho}$ be the $n$-dimensional prism defined in (\ref{sigma rho}). Then for
every $\eta>0$ there exists a sequence $\{u_{\varepsilon}\}\subset
W^{1,2}(P_{\rho})$ such that $u_{\varepsilon}\rightarrow w^{\nu}$ in
$L^{2}(P_{\rho})$ and%
\[
\limsup_{\varepsilon\rightarrow0^{+}}\big(
\mathcal{W}_{\varepsilon}(u_{\varepsilon
},P_{\rho})+\mathcal{J}_{\varepsilon}(u_{\varepsilon},P_{\rho},\mathbb{R}%
^{n})\big)
\leq(\psi(\nu)+\eta)\mathcal{H}^{n-1}(P)~.
\]
\end{lemma}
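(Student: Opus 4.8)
The plan is to build $u_\varepsilon$ by rescaling and repeating periodically a profile that is nearly optimal for $\psi(\nu)$. Since $P$ has normal $\nu$ and contains $0$, it lies in the hyperplane $\{x\cdot\nu=0\}$ and $P_\rho=P+(-\rho/2,\rho/2)\nu$. By the definition \eqref{psi} of $\psi$, fix $\varepsilon_0\in(0,1)$ and $v\in X^\nu$ with $\mathcal F_{\varepsilon_0}^\nu(v)\le\psi(\nu)+\eta$; recall that $v$ is $1$-periodic in $\nu_1,\dots,\nu_{n-1}$, that $v=\pm1$ (hence $\nabla v=0$ and $W(v)=0$) outside the slab $\{|x\cdot\nu|<1/2\}$, and that $v\in W^{1,2}_{\operatorname*{loc}}(\mathbb R^n)\subset L^2_{\operatorname*{loc}}(\mathbb R^n)$ by the definition of $X^\nu$. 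For $\varepsilon>0$ set $t_\varepsilon:=\varepsilon/\varepsilon_0$ and define $u_\varepsilon(x):=v(x/t_\varepsilon)$; then $u_\varepsilon\in W^{1,2}_{\operatorname*{loc}}(\mathbb R^n)$, in particular $u_\varepsilon\in W^{1,2}(P_\rho)$, and $t_\varepsilon\to0^+$ as $\varepsilon\to0^+$.

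The $L^2$-convergence is easy: $u_\varepsilon=w^\nu$ off the slab $\{|x\cdot\nu|<t_\varepsilon/2\}$, whose intersection with $P_\rho$ has measure $t_\varepsilon\,\mathcal H^{n-1}(P)$ once $t_\varepsilon<\rho$, and on that set the change of variables $\xi=x/t_\varepsilon$, together with the $\nu_1,\dots,\nu_{n-1}$-periodicity and local integrability of $v$, bounds $\int|u_\varepsilon-w^\nu|^2$ by $O(t_\varepsilon)$. For the energy, the change of variables $\xi=x/t_\varepsilon$, $\zeta=y/t_\varepsilon$ together with the identity $J_\varepsilon(t_\varepsilon z)=t_\varepsilon^{-n}J_{\varepsilon_0}(z)$ (valid precisely because $t_\varepsilon/\varepsilon=1/\varepsilon_0$) gives the exact scaling
\[
\mathcal W_\varepsilon(u_\varepsilon,P_\rho)+\mathcal J_\varepsilon(u_\varepsilon,P_\rho,\mathbb R^n)
=t_\varepsilon^{\,n-1}\Big(\tfrac1{\varepsilon_0}\int_{P_\rho/t_\varepsilon}W(v)\,d\xi+\varepsilon_0\int_{P_\rho/t_\varepsilon}\int_{\mathbb R^n}J_{\varepsilon_0}(\xi-\zeta)\,|\nabla v(\xi)-\nabla v(\zeta)|^2\,d\zeta\,d\xi\Big).
\]

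To finish, I will compare the right-hand side with $t_\varepsilon^{\,n-1}N_\varepsilon\,\mathcal F_{\varepsilon_0}^\nu(v)$, where $N_\varepsilon$ is the number of translates $\alpha+V^\nu$, $\alpha$ in the lattice $\mathbb Z\nu_1+\dots+\mathbb Z\nu_{n-1}$, that meet $P_\rho/t_\varepsilon$. Those translates tile $\mathbb R^n$; since $v$ is transversely periodic, $W(v)$ vanishes outside $\{|\xi\cdot\nu|<1/2\}$ (so $\int_{V^\nu}W(v)=\int_{Q^\nu}W(v)$), and the map $\xi\mapsto\int_{\mathbb R^n}J_{\varepsilon_0}(\xi-\zeta)|\nabla v(\xi)-\nabla v(\zeta)|^2d\zeta$ is also transversely periodic (because $J$ is translation invariant and $\nabla v$ periodic), each of the two integrals over $P_\rho/t_\varepsilon$ is at most $N_\varepsilon$ times the corresponding integral over a single cell, whence the right-hand side is $\le t_\varepsilon^{\,n-1}N_\varepsilon\,\mathcal F_{\varepsilon_0}^\nu(v)$. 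The transverse projection of $P_\rho/t_\varepsilon$ is the dilated polyhedron $P/t_\varepsilon$, whose $(n-2)$-dimensional boundary contributes only lower order, so $t_\varepsilon^{\,n-1}N_\varepsilon\to\mathcal H^{n-1}(P)$; passing to the limsup in $\varepsilon$ then yields $\limsup_{\varepsilon\to0^+}\big(\mathcal W_\varepsilon(u_\varepsilon,P_\rho)+\mathcal J_\varepsilon(u_\varepsilon,P_\rho,\mathbb R^n)\big)\le\mathcal H^{n-1}(P)\,\mathcal F_{\varepsilon_0}^\nu(v)\le(\psi(\nu)+\eta)\,\mathcal H^{n-1}(P)$.

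The delicate point is the decomposition of the nonlocal term: because $J$ need not have compact support one cannot cut off in the $\nu$-direction, so the cell-by-cell estimate must use the full double integral $\int_{V^\nu}\int_{\mathbb R^n}J_{\varepsilon_0}(\xi-\zeta)|\nabla v(\xi)-\nabla v(\zeta)|^2d\zeta\,d\xi$, which is finite precisely because it equals $\varepsilon_0^{-1}$ times the nonlocal part of $\mathcal F_{\varepsilon_0}^\nu(v)$, together with the transverse periodicity of the integrand after freezing the first variable; the counting statement $t_\varepsilon^{\,n-1}N_\varepsilon\to\mathcal H^{n-1}(P)$ is the only place where the polyhedral structure of $P$ is used, and is routine.
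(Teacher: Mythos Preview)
Your argument is essentially identical to the paper's: you fix a near-optimal profile $v\in X^\nu$ at some scale $\varepsilon_0$, define $u_\varepsilon(x)=v(\varepsilon_0 x/\varepsilon)$, perform the change of variables to extract the factor $t_\varepsilon^{\,n-1}$, and then cover $P_\rho/t_\varepsilon$ by translates of $V^\nu$ via the transverse lattice, using the transverse periodicity of the integrand to reduce to $N_\varepsilon$ copies of the cell problem and the elementary counting $t_\varepsilon^{\,n-1}N_\varepsilon\to\mathcal H^{n-1}(P)$. The paper does exactly this (with the notation $\varepsilon_*$, $u_*$, $Z_\varepsilon$ in place of your $\varepsilon_0$, $v$, $N_\varepsilon$), so there is nothing to add.
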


\begin{proof}
Without loss of generality, we assume that $\nu=e_{n}$. For simplicity, we
omit the superscript $\nu$ in the notation for $w^{\nu}$, $X^{\nu}$, $V^{\nu}%
$, $Q_{1}^{\nu}$, and the subscript $\rho$ when $\rho=1$. By the definition of
$\psi$ (see (\ref{psi})), given $\eta>0$ there exist $\varepsilon_{\ast}%
\in(0,1)$ and $u_{\ast}\in X$ such that%
\begin{equation}
\mathcal{W}_{\varepsilon_{\ast}}(u_{\ast},Q)+\mathcal{J}_{\varepsilon_{\ast}%
}(u_{\ast},V,\mathbb{R}^{n})\leq\psi(e_{n})+\eta~. \label{ls 100}%
\end{equation}
Define $u_{\varepsilon}(x):=u_{\ast}(\frac{\varepsilon_{\ast
}}{\varepsilon}x)$
for $x\in\mathbb{R}^{n}$. Since $u_{\ast}(x)=\pm1$ for $\pm x_{n}\geq1/2$, the
sequence $\{u_{\varepsilon}\}$ converges to $w$ 
in $L_{\operatorname*{loc}%
}^{2}(\mathbb{R}^{n})$.

To estimate $\mathcal{W}_{\varepsilon}(u_{\varepsilon},P_{\rho})$ and
$\mathcal{J}_{\varepsilon}(u_{\varepsilon},P_{\rho},\mathbb{R}^{n})$, we
consider the $(n-1)$-dimensional cube $Q^{(n-1)}:=Q\cap\{x_{n}=0\}$ and we set%
\[
Z_{\varepsilon}:=\Bigl\{\{\alpha\in\mathbb{Z}^{n}:~\alpha_{n}=0~,~(\alpha
+Q^{(n-1)})\cap\Bigl(\frac{\varepsilon_{\ast}}{\varepsilon}P\Bigr)\neq\emptyset
\Bigr\}~.
\]
Observe that
\begin{equation}
\Bigl(\frac{\varepsilon}{\varepsilon_{\ast}}\Bigr)^{n-1}\#
 Z_{\varepsilon
}\rightarrow\mathcal{H}^{n-1}(P)\quad\text{as }\varepsilon\rightarrow0^{+}~,
\label{ls 101}%
\end{equation}
where $\#
 Z_{\varepsilon}$ is the number of elements of $Z_{\varepsilon}$.

Let $S:=\{x\in\mathbb{R}^{n}:~|x_{n}|<1/2\}$. Since $u_{\ast}(x)=\pm1$ for
$\pm x_{n}\geq1/2$, by (\ref{W1}) we have $W(u_{\ast}(x))=0$ for $x\in\mathbb{R}^{n}\setminus S$.
Therefore 
a change of variables and the
periodicity of $u_{\ast}$ give  
\begin{align}
\mathcal{W}_{\varepsilon}(u_{\varepsilon}&,P_{\rho})  =\Bigl(\frac
{\varepsilon}{\varepsilon_{\ast}}\Bigr)^{n-1}\mathcal{W}_{\varepsilon_{\ast}%
}\Bigl(u_{\ast},\frac{\varepsilon_{\ast}}{\varepsilon}P_{\rho}%
\Bigr)=\Bigl(\frac{\varepsilon}{\varepsilon_{\ast}}\Bigr)^{n-1}\mathcal{W}%
_{\varepsilon_{\ast}}\Bigl(u_{\ast},\Bigl(\frac{\varepsilon_{\ast}%
}{\varepsilon}P_{\rho}\Bigr)\cap S\Bigr)\nonumber\\
&  \leq\Bigl(\frac{\varepsilon}{\varepsilon_{\ast}}\Bigr)^{n-1}\sum_{\alpha\in
Z_{\varepsilon}}\mathcal{W}_{\varepsilon_{\ast}}(u_{\ast},\alpha
+Q)=\Bigl(\frac{\varepsilon}{\varepsilon_{\ast}}\Bigr)^{n-1}\#
Z_{\varepsilon}\mathcal{W}_{\varepsilon_{\ast}}(u_{\ast},Q)~.\label{ls 101a}
\end{align}
Similarly,%
\begin{align}
\mathcal{J}_{\varepsilon}(u_{\varepsilon}&,P_{\rho},\mathbb{R}^{n})
=\Bigl(\frac{\varepsilon}{\varepsilon_{\ast}}\Bigr)^{n-1}\!\!\mathcal{J}%
_{\varepsilon_{\ast}}\Bigl(u_{\ast},\frac{\varepsilon_{\ast}}{\varepsilon
}P_{\rho},\mathbb{R}^{n}\Bigr)\nonumber\\
&\leq\Bigl(\frac{\varepsilon}{\varepsilon_{\ast}%
}\Bigr)^{n-1}\!\!\sum_{\alpha\in Z_{\varepsilon}}\mathcal{J}_{\varepsilon_{\ast}%
}(u_{\ast},\alpha+V,\mathbb{R}^{n})
=\Bigl(\frac{\varepsilon}{\varepsilon_{\ast}}\Bigr)^{n-1}\#
Z_{\varepsilon}\mathcal{J}_{\varepsilon_{\ast}}(u_{\ast},V,\mathbb{R}%
^{n})~.\label{ls 102}
\end{align}
The result now follows from (\ref{ls 100})--(\ref{ls 102}).
\end{proof}

\bigskip

\begin{lemma}
\label{lemma sigma rho}Let $u\in BV_{\operatorname*{loc}}(\mathbb{R}%
^{n};\{-1,1\})$. Assume that there exists a bounded polyhedral set $\Sigma$ of
dimension $n-1$ such that $S_{u}=\Sigma$. 
 For every $\rho>0$ let $\Sigma
_{\rho}:=\{x\in\mathbb{R}^{n}:~\operatorname*{dist}(x,\Sigma)<\rho/2\}$. 
Then for every $\sigma>0$ there exist
$\rho>0$ and 
$\delta\in(0,\rho)$ 
with the following property: for every
$\varepsilon_{j}\rightarrow0^{+}$ there exists $v_{j}\in W^{1,2}(\Sigma
_{\rho})$ such that $v_{j}=u$ on $\Sigma_{\rho}\setminus\Sigma_{\rho-\delta}$
and%
\[
\limsup_{j\rightarrow+\infty}\mathcal{F}_{\varepsilon_{j}}(v_{j},\Sigma_{\rho
})\leq\int_{\Sigma}\psi(\nu_{u})~d\mathcal{H}^{n-1}+\sigma
~.
\]

\end{lemma}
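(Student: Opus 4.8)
\emph{Strategy.} The plan is to glue together the local optimal profiles furnished by Lemma~\ref{lemma one face} on disjoint prisms erected over the $(n-1)$-faces of $\Sigma$, bridging them near the $(n-2)$-skeleton with the mollification $u\ast\theta_{\varepsilon_{j}}$, the matching between the two being supplied by the Modification Theorem~\ref{modification}. Write $\Sigma=\bigcup_{k=1}^{N}\overline{P^{k}}$, where the $P^{k}$ are the relatively open $(n-1)$-faces of $\Sigma$, with constant normal $\nu^{k}=\nu_{u}$ on $P^{k}$, and let $\Sigma^{n-2}$ be the union of the lower dimensional faces. Fix $\sigma>0$; we choose, in this order, $\eta>0$, then $\rho>0$, then $\delta\in(0,\rho)$, and finally, given $\varepsilon_{j}\to0^{+}$, we build $v_{j}$. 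Put $\lambda:=C_{0}\rho$ with a constant $C_{0}=C_{0}(\Sigma)\ge1$ so large that the truncated faces $\widetilde P^{k}:=P^{k}\setminus(\Sigma^{n-2})^{\lambda}$ have pairwise disjoint prisms $\widetilde P^{k}_{\rho}$ (defined as in~(\ref{sigma rho}), after translating $\widetilde P^{k}$ through the origin), all contained in $\Sigma_{\rho}$, with $R:=\Sigma_{\rho}\setminus\bigcup_{k}\widetilde P^{k}_{\rho}\subset(\Sigma^{n-2})^{C_{1}\rho}$ for some $C_{1}=C_{1}(\Sigma)$; this is possible for $\rho$ small because two faces sharing an $(n-2)$-face separate at a definite rate away from it and non-adjacent faces are at positive distance. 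Set $\widetilde u_{j}:=u\ast\theta_{\varepsilon_{j}}$ (see~(\ref{u tilde})); since $u$ has polyhedral jump set, $\widetilde u_{j}$ is smooth, coincides with $u$ outside $(\Sigma)^{\varepsilon_{j}}$, agrees near the interior of $P^{k}$ with $w^{\nu^{k}}\!\ast\theta_{\varepsilon_{j}}$ in the translated coordinates (using that $\psi$ is even we may orient $\nu^{k}$ so that $u=w^{\nu^{k}}$ there), and satisfies $\varepsilon_{j}\int_{\Sigma_{\rho}}|\nabla\widetilde u_{j}|^{2}\le C$ uniformly in $j$ as in~(\ref{G1}). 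For each $k$, Lemma~\ref{lemma one face} applied with $P=\widetilde P^{k}$ gives $u^{k}_{\varepsilon}\to w^{\nu^{k}}$ in $L^{2}(\widetilde P^{k}_{\rho})$ with $\limsup_{\varepsilon\to0^{+}}\big(\mathcal{W}_{\varepsilon}(u^{k}_{\varepsilon},\widetilde P^{k}_{\rho})+\mathcal{J}_{\varepsilon}(u^{k}_{\varepsilon},\widetilde P^{k}_{\rho},\mathbb{R}^{n})\big)\le(\psi(\nu^{k})+\eta)\mathcal{H}^{n-1}(P^{k})$; applying Theorem~\ref{modification} to $\{u^{k}_{\varepsilon_{j}}\}_{j}$ on $\widetilde P^{k}_{\rho}$ produces $v^{k}_{j}\to w^{\nu^{k}}$ in $L^{2}(\widetilde P^{k}_{\rho})$ with $v^{k}_{j}=u^{k}_{\varepsilon_{j}}$ on $(\widetilde P^{k}_{\rho})_{2\delta}$, $v^{k}_{j}=w^{\nu^{k}}\!\ast\theta_{\varepsilon_{j}}$ on $\widetilde P^{k}_{\rho}\setminus(\widetilde P^{k}_{\rho})_{\delta}$, and $\limsup_{j}\mathcal{F}_{\varepsilon_{j}}(v^{k}_{j},\widetilde P^{k}_{\rho})\le(\psi(\nu^{k})+\eta)\mathcal{H}^{n-1}(P^{k})+\kappa_{1}\delta$. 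I define $v_{j}:=v^{k}_{j}$ on $\widetilde P^{k}_{\rho}$ and $v_{j}:=\widetilde u_{j}$ on $R$; since $\widetilde P^{k}$ lies at distance $\lambda\ge\rho$ from $\Sigma^{n-2}$, near $\partial\widetilde P^{k}_{\rho}$ the only part of $\Sigma$ present is $P^{k}$, so there $\widetilde u_{j}=w^{\nu^{k}}\!\ast\theta_{\varepsilon_{j}}=v^{k}_{j}$ and $v_{j}\in W^{1,2}(\Sigma_{\rho})$, $v_{j}\to u$ in $L^{2}(\Sigma_{\rho})$, with $v_{j}=u$ on $\Sigma_{\rho}\setminus\Sigma_{\rho-\delta}$ for $j$ large (the finitely many remaining indices being harmless).

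\emph{Energy accounting.} Split $\Sigma_{\rho}=R\sqcup\bigsqcup_{k}\widetilde P^{k}_{\rho}$. The potential part splits exactly, $\mathcal{W}_{\varepsilon_{j}}(v_{j},\Sigma_{\rho})=\sum_{k}\mathcal{W}_{\varepsilon_{j}}(v^{k}_{j},\widetilde P^{k}_{\rho})+\mathcal{W}_{\varepsilon_{j}}(\widetilde u_{j},R)$, and the last term is $O(\rho)$ since $W(\widetilde u_{j})$ lives in $(\Sigma)^{\varepsilon_{j}}$, whose intersection with $R\subset(\Sigma^{n-2})^{C_{1}\rho}$ has Lebesgue measure $\lesssim\varepsilon_{j}\rho\,\mathcal{H}^{n-2}(\Sigma^{n-2})$. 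For $\mathcal{J}_{\varepsilon_{j}}(v_{j},\Sigma_{\rho},\Sigma_{\rho})$ I decompose $\Sigma_{\rho}\times\Sigma_{\rho}$ into the diagonal blocks $\widetilde P^{k}_{\rho}\times\widetilde P^{k}_{\rho}$, the block $R\times R$, and the off-diagonal blocks. The blocks $\widetilde P^{k}_{\rho}\times\widetilde P^{k}_{\rho}$, together with the matching potential terms, form $\sum_{k}\mathcal{F}_{\varepsilon_{j}}(v^{k}_{j},\widetilde P^{k}_{\rho})$, whose $\limsup$ is $\le\sum_{k}(\psi(\nu^{k})+\eta)\mathcal{H}^{n-1}(P^{k})+N\kappa_{1}\delta$; in particular $\sum_{k}\varepsilon_{j}\int_{\widetilde P^{k}_{\rho}}|\nabla v^{k}_{j}|^{2}$ is bounded by Lemma~\ref{F53} and this estimate, hence $\varepsilon_{j}\int_{\Sigma_{\rho}}|\nabla v_{j}|^{2}$ is bounded. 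The block $R\times R$ equals $\mathcal{J}_{\varepsilon_{j}}(\widetilde u_{j},R)\le\mathcal{J}_{\varepsilon_{j}}(\widetilde u_{j},(\Sigma^{n-2})^{C_{1}\rho})\le c_{1}C_{1}\rho\,\mathcal{H}^{n-2}(\Sigma^{n-2})$ by Lemma~\ref{lemma n-2} (applicable once $C_{1}\rho<\delta_{\Sigma}$ and $\varepsilon_{j}<C_{1}\rho$). It remains to bound the off-diagonal blocks, which is the crux.

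\emph{The off-diagonal interactions (main obstacle).} In every off-diagonal block one argument lies in some $\widetilde P^{k}_{\rho}$ and the other in $\Sigma_{\rho}\setminus\widetilde P^{k}_{\rho}$. The key facts are that $\nabla v^{k}_{j}$ is supported in the slab $\{x:\operatorname{dist}(x,\text{hyperplane of }P^{k})\le C_{\ast}\varepsilon_{j}\}$ — this holds for $u^{k}_{\varepsilon_{j}}$, for $w^{\nu^{k}}\!\ast\theta_{\varepsilon_{j}}$, and, inspecting~(\ref{vj})--(\ref{E10}), for the interpolated function as well, with $C_{\ast}=C_{\ast}(\eta)$ fixed — and that near $\partial\widetilde P^{k}_{\rho}$ the function $v_{j}$ equals $w^{\nu^{k}}\!\ast\theta_{\varepsilon_{j}}$ on \emph{both} sides. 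Splitting the first variable into the core $(\widetilde P^{k}_{\rho})_{\delta}$ and the collar $\widetilde P^{k}_{\rho}\setminus(\widetilde P^{k}_{\rho})_{\delta}$: over the core $\operatorname{dist}(\cdot,\Sigma_{\rho}\setminus\widetilde P^{k}_{\rho})\ge\delta$, so $\int_{\Sigma_{\rho}\setminus\widetilde P^{k}_{\rho}}J_{\varepsilon_{j}}(x-y)\,dy\le\tfrac12\omega_{1}(\varepsilon_{j}/\delta)$ (see~(\ref{little omega})) and, using $|a-b|^{2}\le2|a|^{2}+2|b|^{2}$, this contribution is $\lesssim\omega_{1}(\varepsilon_{j}/\delta)\,\varepsilon_{j}\int_{\Sigma_{\rho}}|\nabla v_{j}|^{2}\to0$; over the collar $v_{j}=w^{\nu^{k}}\!\ast\theta_{\varepsilon_{j}}$, so the interaction reduces to one of $\widetilde u_{j}$ with itself, and the remaining pairs are concentrated either in $(\Sigma^{n-2})^{C_{1}\rho}$ (contributing $O(\rho)$ by Lemma~\ref{lemma n-2}), or in $O(\delta)$-thick neighbourhoods of $\Sigma$ along the lateral boundaries of the prisms (contributing $O(\delta)$ by the argument of Lemma~\ref{lemma S S}), or between portions of $\Sigma$ at fixed mutual distance $\gtrsim\rho$ (infinitesimal by Lemma~\ref{lemma separated} through the decay of $\omega_{1}$ at the scale $\varepsilon_{j}/\rho$). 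Collecting all terms, $\limsup_{j}\mathcal{F}_{\varepsilon_{j}}(v_{j},\Sigma_{\rho})\le\sum_{k}\psi(\nu^{k})\mathcal{H}^{n-1}(P^{k})+\eta\,\mathcal{H}^{n-1}(\Sigma)+N\kappa_{1}\delta+C_{2}(\rho+\delta)$; choosing $\eta$, then $\rho$, then $\delta$ small enough that the last three terms sum to at most $\sigma$ yields the claim. The genuine difficulty is exactly this last step: showing that patching the local optimal profiles creates no extra surface energy, for which one exploits simultaneously the localization of every transition layer in an $O(\varepsilon_{j})$-slab together with the kernel's decay at the fixed scales $\delta$ and $\rho$ (killing all core-to-outside interactions), and the fact that wherever $v_{j}$ is not an optimal profile it coincides with $u\ast\theta_{\varepsilon_{j}}$, so the surviving interactions are of the types already estimated — up to $O(\rho)+O(\delta)$ — in Lemmas~\ref{lemma S S}, \ref{lemma separated} and~\ref{lemma n-2}. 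A secondary point is the geometric bookkeeping guaranteeing disjointness of the truncated prisms and confinement of $R$ to an $O(\rho)$-neighbourhood of $\Sigma^{n-2}$, with all constants scaling correctly in $\rho$.
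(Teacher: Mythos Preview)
Your proof follows essentially the same architecture as the paper's: build optimal profiles on disjoint prisms over (truncated) faces via Lemma~\ref{lemma one face}, match them to $u\ast\theta_{\varepsilon_j}$ via Theorem~\ref{modification}, fill the remainder near $\Sigma^{n-2}$ with the mollification, and control the three energy pieces (diagonal prism terms; skeleton terms via Lemma~\ref{lemma n-2}; off-diagonal terms via Lemma~\ref{lemma separated}).

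The one technical difference is in the off-diagonal treatment. The paper introduces a \emph{second} family of polyhedra $R^i$ with $P^i\Subset R^i$ and pairwise disjoint $\overline{R^i_\rho}$: the optimal profile and the modification are carried out on the larger $R^i_\rho$, while the off-diagonal pairing in the decomposition of $\Sigma_\rho\times\Sigma_\rho$ is taken as $P^i_\rho\times(\Sigma_\rho\setminus R^i_\rho)$. This buys a separation $\operatorname{dist}(P^i,\partial R^i)>0$ that is \emph{fixed} (independent of $\delta$ and $\varepsilon_j$), so Lemma~\ref{lemma separated} disposes of these terms in one line without any collar analysis, and the residual square $(\Sigma_\rho\setminus\bigcup_i P^i_\rho)^2\subset((\Sigma^{n-2})^{\hat\sigma})^2$ falls directly under Lemma~\ref{lemma n-2}. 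Your core/collar splitting by the modification parameter $\delta$ accomplishes the same thing but requires the extra observation that on the collar $v_j$ already equals the mollification, plus the further case analysis you sketch. Both routes are correct; the paper's nested-polyhedra device is the cleaner bookkeeping and avoids having to revisit the structure of the interpolated function from~(\ref{vj}).
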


\begin{proof}
Let $\delta_{\Sigma}>0$ be as in Lemma \ref{lemma n-2}. 
Fix $\sigma$ and $\hat{\sigma}$ with
$\hat{\sigma}\in(0,\min\{\sigma,\delta_{\Sigma}\})$. 
There exist $\rho\in(0,\hat{\sigma})$ 
and a finite number of bounded
polyhedra $P^{1}$, \ldots, $P^{k}$ of dimension $n-1$ and contained in the
$n-1$ dimensional faces of $\Sigma$ such that $\overline{P}{}_{\rho}^{i}\cap
\overline{P}{}_{\rho}^{j}=\emptyset$ 
for $i\neq j$ and%
\begin{equation}
\Sigma_{\rho}\setminus\bigcup_{i=1}^{k}P_{\rho}^{i}\subset(\Sigma
^{n-2})^{\hat{\sigma}
}, \label{ls 103}%
\end{equation}
where $P_{\rho}^{i}$ and $(\Sigma^{n-2})^{\hat{\sigma}
}$ are defined as in 
(\ref{sigma rho}) and Lemma 
\ref{lemma n-2}, respectively. Find $R^{1}$, \ldots,
$R^{k}$, bounded polyhedra of dimension $n-1$ contained in the $n-1$
dimensional faces of $\Sigma$, such that $P^{i}\Subset R^{i}$ and
$\overline{R}{}_{\rho}^{i}\cap\overline{R}{}_{\rho}^{j}=\emptyset$ 
for $i\neq j$.

Fix $\eta>0$ 
such that $\eta\mathcal{H}^{n-1}(\Sigma)<\sigma/2$. 
By Lemma \ref{lemma one face} for every $i=1$, \ldots, $k$,
there exists a sequence $\{u_{j}^{i}\}\subset W^{1,2}(R_{\rho}^{i})$ such that
$u_{j}^{i}\rightarrow u$ in $L^{2}(R_{\rho}^{i})$, and%
\begin{equation}
\limsup_{j\rightarrow+\infty}\big(
\mathcal{W}_{\varepsilon_{j}}(u_{j}^{i},R_{\rho
}^{i})+\mathcal{J}_{\varepsilon_{j}}(u_{j}^{i},R_{\rho}^{i},\mathbb{R}%
^{n})\big)
\leq(\psi(\nu^{i})+\eta)\mathcal{H}^{n-1}(R^{i})~.\label{77bis}
\end{equation}
By Theorem \ref{modification} there exist $\delta\in(0,\min\{\hat{\sigma},\rho/2\})$ 
and $\{v_{j}%
^{i}\}\subset W^{1,2}(R_{\rho}^{i})$ such that $v_{j}^{i}\rightarrow u$ in
$L^{2}(R_{\rho}^{i})$ as $j\rightarrow+\infty$, 
$v_{j}^{i}=u\!\ast\theta_{\varepsilon_{j}}$ on
$R_{\rho}^{i}\setminus (R_{\rho}^{i})_{\delta}$, 
and%
\begin{align}
\limsup_{j\rightarrow+\infty}\mathcal{F}_{\varepsilon_{j}}(v_{j}^{i},R_{\rho
}^{i})  &  \leq\limsup_{j\rightarrow+\infty}\mathcal{F}_{\varepsilon_{j}%
}(u_{j}^{i},R_{\rho}^{i})+\kappa_{1}\delta
\label{ls 104b}\\
&  \leq(\psi(\nu^{i})+\eta)\mathcal{H}^{n-1}(R^{i})+\kappa_{1}\hat\sigma
~,\nonumber
\end{align}
where, we recall, the costant $\kappa_{1}>0$ is independent of $j$, $\hat\sigma$, 
and $R_{\rho}^{i}$. Define $v_{j}:=v_{j}^{i}$ on $R_{\rho}^{i}$ and
$v_{j}:=u\!\ast\theta_{\varepsilon_{j}}$ on $A_{\rho}:={}
\Sigma_{\rho}\setminus
\bigcup_{i=1}^{k}R_{\rho}^{i}$. Then $v_{j}\in W^{1,2}(\Sigma_{\rho})$ and
$v_{j}\rightarrow u$ in $L^{2}(\Sigma_{\rho})$. \ Moreover $v_{j}=u$ on
$\Sigma_{\rho}\setminus\Sigma_{\rho-\delta}$ for all $j$ sufficiently large.

By additivity we obtain 
\begin{equation}
\mathcal{W}_{\varepsilon_{j}}(v_{j},\Sigma_{\rho})\leq\sum_{i=1}%
^{k}\mathcal{W}_{\varepsilon_{j}}(v_{j},R_{\rho}^{i})+\mathcal{W}%
_{\varepsilon_{j}}(v_{j},A_{\rho}
)~. \label{ls 104c}%
\end{equation}
Since $(u\!\ast\theta_{\varepsilon_{j}})
(x)=\pm1$ for $x\notin\Sigma_{2\varepsilon_{j}}$ and $-1\le (u\!\ast\theta_{\varepsilon_{j}})(x)\leq 1$, 
by (\ref{W1}) and (\ref{ls 103}) 
we
have%
\begin{align*}
\mathcal{W}_{\varepsilon_{j}}(v_{j},A_{\rho}
)  &
 {} \leq {}
\mathcal{W}_{\varepsilon_{j}}( u\!\ast\theta_{\varepsilon_{j}}
,(\Sigma^{n-2})^{\hat{\sigma}
}\cap
\Sigma_{2\varepsilon_{j}})\\
&\leq\frac{1}{\varepsilon_{j}}M_{W}\mathcal{L}%
^{n}((\Sigma^{n-2})^{\hat{\sigma}
}\cap\Sigma_{2\varepsilon_{j}})  \leq M_{W}c_{\Sigma
}\hat{\sigma}
\mathcal{H}^{n-2}%
(\Sigma^{n-2})~,
\end{align*}
where $M_{W}$ is the constant in (\ref{G5}) and $c_{\Sigma
}>0$ is a constant
depending only on the geometry of $\Sigma$. The previous inequality together
with (\ref{ls 104c}) gives%
\begin{equation}
\mathcal{W}_{\varepsilon_{j}}(v_{j},\Sigma_{\rho})\leq\sum_{i=1}%
^{k}\mathcal{W}_{\varepsilon_{j}}(v_{j},R_{\rho}^{i})+M_{W}c_{\Sigma
}%
\hat{\sigma}
\mathcal{H}^{n-2}(\Sigma^{n-2})~. \label{ls 104d}%
\end{equation}

To estimate $\mathcal{J}_{\varepsilon_{j}}(v_{j},\Sigma_{\rho})$ we use the
inclusion%
\begin{align*}
\Sigma_{\rho}\times\Sigma_{\rho}  &  \subset\bigcup_{i=1}^{k}(R_{\rho}%
^{i}\times R_{\rho}^{i})\cup\bigcup_{i=1}^{k}(P_{\rho}^{i}\times(\Sigma_{\rho
}\setminus R_{\rho}^{i}))\cup\bigcup_{i=1}^{k}((\Sigma_{\rho}\setminus
R_{\rho}^{i})\times P_{\rho}^{i})\\
&  \cup\biggl(\Bigl(\Sigma_{\rho}\setminus\bigcup_{i=1}^{k}P_{\rho}%
^{i}\Bigr)\times\Bigl(\Sigma_{\rho}\setminus\bigcup_{i=1}^{k}P_{\rho}%
^{i}\Bigr)\biggr)\cup\bigcup_{i\neq j}(R_{\rho}^{i}\times R_{\rho}^{j})~,
\end{align*}
which, together with (\ref{ls 103}), gives%
\begin{align}
\mathcal{J}_{\varepsilon_{j}}  &  (v_{j},\Sigma_{\rho})\leq\sum_{i=1}%
^{k}\mathcal{J}_{\varepsilon_{j}}(v_{j},R_{\rho}^{i})+\sum_{i=1}%
^{k}\mathcal{J}_{\varepsilon_{j}}(v_{j},P_{\rho}^{i},\Sigma_{\rho}\setminus
R_{\rho}^{i})\label{ls 104}\\
&  +\sum_{i=1}^{k}\mathcal{J}_{\varepsilon_{j}}(v_{j},\Sigma_{\rho}\setminus
R_{\rho}^{i},P_{\rho}^{i})+\mathcal{J}_{\varepsilon_{j}}%
(v_{j},(\Sigma^{n-2})^{\hat{\sigma}
})+\sum_{i\neq j}\mathcal{J}_{\varepsilon_{j}%
}(v_{j},R_{\rho}^{i},R_{\rho}^{j})~.\nonumber
\end{align}
By Lemma \ref{F53} and (\ref{ls 104b}) the sequence $\{\varepsilon_{j}%
\int_{R_{\rho}^{i}
}|\nabla v_{j}^{i
}|^{2}dx\}$ is uniformly bounded with respect
to $j$. Taking into account (\ref{mollifier support}) 
and (\ref{derivatives convolution}) we see that the same property holds for 
$\{\varepsilon_{j}%
\int_{\Sigma_{\rho}}|\nabla v_{j}|^{2}dx\}$. Hence, by Lemma \ref{lemma separated}, the second, third, and fifth
terms on the right-hand side of (\ref{ls 104}) tend to zero as $j\rightarrow
+\infty$. By Lemma \ref{lemma n-2},
\begin{equation}
\mathcal{J}_{\varepsilon_{j}}(v_{j},(\Sigma^{n-2})^{\hat{\sigma}
})\leq
c_{1
}\hat{\sigma}
\mathcal{H}^{n-2}(\Sigma^{n-2})~.
 \label{ls 106}%
\end{equation}
Combining 
(\ref{ls 104b}), (\ref{ls 104d}), (\ref{ls 104}), and
(\ref{ls 106})  we get
\begin{align}
\limsup_{j\rightarrow+\infty}\mathcal{F}_{\varepsilon_{j}}(v_{j},\Sigma_{\rho
})&\leq\int_{\Sigma}\psi(\nu_{u})~d\mathcal{H}^{n-1}
+\eta\mathcal{H}^{n-1}(\Sigma)\nonumber
\\
&+\kappa_{1}\hat{\sigma}+M_{W}c_{\Sigma
}%
\hat{\sigma}\mathcal{H}^{n-2}(\Sigma^{n-2})
+c_{1}\hat{\sigma}
\mathcal{H}^{n-2}(\Sigma^{n-2})~.\nonumber%
\end{align}
Since $\eta\mathcal{H}^{n-1}(\Sigma)<\sigma/2$, the 
conclusion
 can be obtained by taking $\hat{\sigma}$ sufficiently small.
\end{proof}

\medskip
We are now ready to prove Theorem \ref{theorem limsup}.
\medskip

\begin{proof}
[Proof of Theorem \ref{theorem limsup}]By \cite[Lemma 3.1]{baldo1990} for
every $u\in BV(\Omega;\{-1,1\})$ there exists a sequence $\{z_{k}\}$ in
$BV(\Omega;\{-1,1\})$ converging to $u$ in $L^{2}(\Omega)$ such that
$S_{z_{k}}$ is given by the intersection with $\Omega$ with a bounded
polyhedral set $\Sigma_{k}$ of dimension $n-1$ and $\mathcal{H}^{n-1}%
(S_{z_{k}})\rightarrow\mathcal{H}^{n-1}(S_{u})$. By Reshetnyak's convergence
theorem (see, e.g., 
\cite{spector}) this implies that%
\[
\lim_{k\rightarrow+
\infty}\int_{S_{z_{k}}}\psi(\nu_{z_{k}})~d\mathcal{H}%
^{n-1}=\int_{S_{u}}\psi(\nu_{u})~d\mathcal{H}^{n-1}~.
\]
Hence, using the lower semicontinuity of $\mathcal{F}^{^{\prime\prime}}%
(\cdot,\Omega)$ with respect to convergence in $L^{2}(\Omega)$ it suffices to
prove (\ref{limsup inequality}) for $u\in BV(\Omega;\{-1,1\})$ such that
$S_{u}=\Omega\cap\Sigma$ with $\Sigma$ a bounded polyhedral set of dimension
$n-1$.

In this case, for every $\sigma>0$ let $0<\delta<\rho$ and
$v_{j}\in W^{1,2}(\Sigma_{\rho})$ be as in Lemma \ref{lemma sigma rho}. Define
$u_{j}:=v_{j}$ on $\Sigma_{\rho}$ and $u_{j}:=u$ on $\Omega\setminus
\Sigma_{\rho}$. The properties of $v_{j}$ imply that $u_{j}:=u$ on
$\Omega\setminus\Sigma_{\rho-\delta}$ for all $j$ sufficiently large. Hence,
by (\ref{W1}) we have%
\begin{equation}
\mathcal{W}_{\varepsilon_{j}}(u_{j},\Omega)\leq\mathcal{W}_{\varepsilon_{j}%
}(u_{j},\Sigma_{\rho})~. \label{ls 107}%
\end{equation}

To estimate $\mathcal{J}_{\varepsilon_{j}}(u_{j},\Omega)$ we consider the
inclusion%
\begin{align}
\Omega\times\Omega\subset &  (\Sigma_{\rho}\times\Sigma_{\rho})\cup
(\Sigma_{\rho-\delta}\times(\Omega\setminus\Sigma_{\rho}))\cup((\Omega
\setminus\Sigma_{\rho})\times\Sigma_{\rho-\delta})\label{sets omega}\\
&  \cup((\Omega\setminus\Sigma_{\rho-\delta})\times(\Omega\setminus
\Sigma_{\rho-\delta}))~.\nonumber
\end{align}
Since $\nabla u_{j}=\nabla u=0$ on $\Omega\setminus\Sigma_{\rho-\delta}$, in
view of (\ref{sets omega}) we obtain
\begin{equation}
\mathcal{J}_{\varepsilon_{j}}(u_{j},\Omega)\leq\mathcal{J}_{\varepsilon_{j}%
}(u_{j},\Sigma_{\rho})+\mathcal{J}_{\varepsilon_{j}}(u_{j},\Sigma_{\rho
-\delta},\Omega\setminus\Sigma_{\rho})+\mathcal{J}_{\varepsilon_{j}}%
(u_{j},\Omega\setminus\Sigma_{\rho},\Sigma_{\rho-\delta})~. \label{ls 108}%
\end{equation}
By Lemmas \ref{F53} 
and \ref{lemma separated} the last two terms tend to zero as
$j\rightarrow\infty$, and by Lemma \ref{lemma sigma rho} we deduce
\[
\limsup_{j\rightarrow+\infty}\mathcal{F}_{\varepsilon_{j}}(u_{j}
,\Sigma_{\rho
})\leq\int_{\Sigma}\psi(\nu_{u})~d\mathcal{H}^{n-1}+\sigma
~.
\]
Together with (\ref{ls 107}) and (\ref{ls 108}) this shows that
\[
\mathcal{F}^{^{\prime\prime}}(u,\Omega)\leq\limsup_{j\rightarrow+\infty
}\mathcal{F}_{\varepsilon_{j}}(u_{j},\Omega)\leq\int_{\Sigma}\psi(\nu
_{u})~d\mathcal{H}^{n-1}+\sigma
~.
\]
Letting $\sigma$ 
tend to $0$ we obtain (\ref{limsup inequality}).
\end{proof}

\section{Acknowledgements}

The authors wish to acknowledge the Center for Nonlinear Analysis (NSF PIRE
Grant No. OISE-0967140) where part of this work was carried out. The research of G. Dal Maso 
was partially funded by the European Research Council under Grant No. 290888
``Quasistatic and Dynamic Evolution Problems in Plasticity and Fracture'', the research 
of I. Fonseca and G. Leoni
was partially funded by the National Science Foundation under
Grants 
No. DMS-1411646 and No. DMS-1412095, respectively.

\end{document}